\theoremstyle{definition}
\newtheorem{Def}{Definition}[subsection]
\newtheorem{Def-Prop}[Def]{Definition-Proposition}
\newtheorem{Th}[Def]{Theorem}
\newtheorem{remark}[Def]{Remark}
\newtheorem{Prop}[Def]{Proposition}
\newtheorem{Lemma}[Def]{Lemma}
\newtheorem{Cor}[Def]{Corollary}
\newtheorem{Th1}{Theorem}
\DeclareMathOperator*{\hlim}{\text{holim}}
\DeclareMathOperator*{\clim}{\text{colim}}
\newcommand{\hdot}{{\:\raisebox{3pt}{\text{\circle*{1.5}}}}}
\newcommand{\hdotc}{{\:\raisebox{1pt}{\text{\circle*{1.5}}}}}
 \newcommand{\Ba}{\begin{array}}
 \newcommand{\Ea}{\end{array}}
 \tikzstyle{int}=[circle, draw,fill=black,outer sep=0,minimum size=3pt, inner sep=0]
  \tikzstyle{ext}=[circle, draw=black,outer sep=0,inner sep=1pt]
\tikzset{snakeit/.style={decorate, decoration={snake, amplitude=.2mm,segment length=1mm}}}
\tikzset{ext/.style={circle, draw,inner sep=1pt}, int/.style={circle,draw,fill,inner sep=2pt},nil/.style={inner sep=1pt}}
\tikzset{cy/.style={circle,draw,fill,inner sep=2pt},scy/.style={circle,draw,inner sep=2pt},scyx/.style={draw,cross out,inner sep=2pt},scyt/.style={draw,regular polygon,regular polygon sides=3,inner sep=0.95pt}}
\tikzset{exte/.style={circle, draw,inner sep=3pt},inte/.style={circle,draw,fill,inner sep=3pt}}
\tikzset{diagram/.style={matrix of math nodes, row sep=3em, column sep=2.5em, text height=1.5ex, text depth=0.25ex}}
\tikzset{diagram2/.style={matrix of math nodes, row sep=0.5em, column sep=0.5em, text height=1.5ex, text depth=0.25ex}}
\tikzset{rowcolsep/.style={column sep=.2cm, row sep=.1cm}}
\tikzset{
  crossed/.style={
    decoration={markings,mark=at position .5 with {\arrow{|}}},
    postaction={decorate},
    shorten >=0.4pt}}
\tikzset{every picture/.style={baseline=-.65ex} }
\begin{document}

\title{Getzler-Kapranov complexes and moduli stacks of curves}
\author{Alexey Kalugin} 
\email{alexey.kalugin@msi.mpg.de}
\address{Max Planck Institut für Mathematik in den Naturwissenschaften, Inselstraße 22, 04103 Leipzig, Germany}
\begin{abstract} In this paper, we study so-called Getzler-Kapranov complexes and their relation to the cohomology of moduli stacks of curves.
\end{abstract}
\maketitle
\section{Introduction} 

\subsection{Introduction} We investigate an interplay between the cohomology of graph complexes and the cohomology $H_c^{\hdot}(\mathcal M_{g,n},\mathbb Q)$ of moduli stacks  $\mathcal M_{g,n}.$ This subject was initiated by S. Merkulov and T. Willwacher in \cite{MW} and later investigated by  M. Chan, S. Galatius, and S. Payne \cite{CGP1} \cite{CGP2} (see also \cite{AWZ} and \cite{AZ}). 
Recall that for any $g$ and $n$ such that $2g-2+n>0$ $\mathcal M_{g,n}$ is the separated, smooth and non-proper Deligne-Mumford stack \cite{DM} \cite{KN}. According to P. Deligne \cite{DelH} \cite{DelH2} the rational compactly supported cohomology $H^i_c(\mathcal M_{g,n},\mathbb Q)$ of $\mathcal M_{g,n}$ carries a weight filtration:
$$
W_0H^i(\mathcal M_{g,n},\mathbb Q)\subset \dots \subset W_{i-1}H^i(\mathcal M_{g,n},\mathbb Q)\subset H^i(\mathcal M_{g,n},\mathbb Q)
$$
The weight filtration reflects geometric and topological properties of moduli stacks $\mathcal M_{g,n}.$ A starting point of our study is a fascinating result by M. Chan, S. Galatius, and S. Payne \cite{CGP1} \cite{CGP2}. In \textit{ibid.}, it was shown that the weight zero quotient of $H_c^i(\mathcal M_{g,n},\mathbb Q)$ is identified with the cohomology of the $g$-loop part of an $n$-hairy (marked) graph complex:
$$
 W_0H_c^i(\mathcal M_{g,n},\mathbb Q)\cong H^i(\textsf B_g\textsf H_n\textsf {GC}).
$$
Here $\textsf B_g\textsf H_n\textsf {GC}_0^{\hdot}(\delta)$ is the combinatorial complex with cochains generated by at least trivalent graphs of genus $g$ with $n$-markings and a certain orientation. A differential $\delta$ is defined by splitting a vertex \cite{KWZ}, in particular when $n=0$ we recover a definition of the famous \textit{M. Kontsevich graph complex} $\textsf B_g\textsf {GC}^{\hdot}(\delta)$ \cite{Will}.

\subsection{Results} We extend the result of Chan-Galatius-Payne to higher weights. For every $g,n$ such that $2g-2+n>0$ we introduce a decorated graph complex $\textsf W_k\textsf  {GK}_{g,n}^{\hdot}$ which computes the weight $k$-quotient of $H_c^{\hdot}(\mathcal M_{g,n},\mathbb Q).$ These complexes are quasi-isomorphic to certain subcomplexes of a Getzler-Kapranov complex introduced in \cite{AWZ}.\footnote{In \textit{ibid.} this complex was defined as a covariant Feynman transform of the modular cooperad $C^{\hdot}(\overline{\mathcal M},\mathbb Q)$ from \cite{KG}.}
We call $\textsf W_k\textsf {GK}_{g,n}^{\hdot}$ a \textit{Getzler-Kapranov complex of the weight $k.$} Our first main result is:

\begin{Th1} For $g,n$ such that $2g-2+n>0$ we have the following description of the weight quotients on the compactly supported cohomology of $\mathcal                                                    M_{g,n}:$
$$
\mathrm {Gr}^W_k H^{i+k}_c(\mathcal M_{g,n},\mathbb Q)\overset{\sim}{\longrightarrow}  H^i(\textsf W_k\textsf{GK}_{g,n})
$$
\end{Th1}
In particular, we show that the weight zero Getzler-Kapranov complex $\textsf W_0\textsf {GK}_{g,n}^{\hdot}$ is quasi-isomorphic to $\textsf B_g\textsf H_n\textsf {GC}^{\hdot}(\delta)$ and we recover the result of Chan-Galatius-Payne. Further applying explicit computations of the rational cohomology of $\overline{\mathcal M}_{g,n}$ in low degrees from \cite{AC} we show that the weight quotients of $\mathcal M_{g,n}$ in degrees $1,3,5$ vanish and the first nontrivial weight quotient (after the weight zero) may appear in the weight two. Explicit computations, in this case, were made in the recent beautiful work of S. Payne and T. Willwacher \cite{PW1}. 
\par\medskip
The second object of our study is a \textit{hairy Getzler-Kapranov complex} $\textsf {H}_{\geq 1}\textsf {GK}_g^{\hdot}.$ Before we give a definition let us recall a construction from hairy graph complexes. Following \cite{TW} we consider a hairy graph complex $\textsf B_g \textsf H_{\geq 1} \textsf {GC}^{\hdot}(\delta+\chi).$ As a mere graded vector space this complex is defined by the rule:
$$
\textsf B_g \textsf H_{\geq 1} \textsf {GC}^{\hdot}:=\prod_{n=1}^{\infty} (\textsf B_g \textsf H_{n} \textsf {GC}^{\hdot}\otimes_{\Sigma_n} \mathrm {sgn}_n)^{\Sigma_n}
$$
A differential is defined as a sum $\delta+\chi,$ where $\chi$ is a differential acting by an inserting a hair in all possible ways:
$$
\chi\colon \Gamma \longmapsto \sum_{v\in V(\Gamma)} h_v(\Gamma)
$$
The result from \textit{ibid.} identifies the cohomology of this complex with the shifted cohomology of the M. Kontsevich graph complex: 
$$
\textsf B_g \textsf H_{\geq 1} \textsf {GC}^{\hdot}(\delta+\chi)\cong \textsf {B}_g\textsf {GC}^{\hdot-1}(\delta)
$$
\par\medskip 
A definition of the hairy Getzler-Kapranov $\textsf {H}_{\geq 1}\textsf {GK}_g^{\hdot}$ is reminiscent of the definition above. Cochains are collections of isomorphism classes of $C^{\hdot}(\overline{\mathcal M}_{g',n'},\mathbb Q)$-decorated stable graphs of genus $g$ with skew-symmetrised hairs (markings). A differential is defined as a sum of two differentials. The first differential splits a vertex and acts on decorations by a pullback along the clutching morphism and the second differential is defined by adding a decorated hair. Our first result about the hairy Getzler-Kapranov complexes (also proved in \cite{AWZ}) is the following:

\begin{Th1} For every $g\geq 2$ the hairy Getzler-Kapranov complex is quasi-isomorphic to the shifted compactly supported cochains of $\mathcal M_g$ with trivial coefficients:
$$
C_c^{\hdot}(\mathcal M_g,\mathbb Q)[-1]\overset{\sim}{\longrightarrow}\textsf {H}_{\geq 1}\textsf {GK}_g^{\hdot}
$$
\end{Th1}

Our second result (Conjecture $31$ from \cite{AWZ}) is a realisation of the hairy Getzler-Kapranov complex $\textsf {H}_{\geq 1}\textsf {GK}_g^{\hdot}$ as the total DG-vector space associated with the certain double complex:

\begin{Th1} Let $g\geq0$ then:
\par\medskip
\begin{enumerate}[(i)]
\item There exists a well-defined complex in the derived category of vector spaces:
$$
\begin{CD}
C_c^{\hdot}(\mathcal M_{g,1},\mathbb Q) @>{\nabla_1}>> \dots @>{\nabla_1}>> \left(C_c^{\hdot}(\mathcal M_{g,n},\mathbb Q)\otimes_{\Sigma_n} \mathrm {sgn}_n\right)^{\Sigma_n} @>{\nabla_1}>> \dots
\end{CD}
$$
Here $\nabla_1$ is a so-called \textit{Willwacher differential}:
$$
\nabla_1\colon \left(C_c^{\hdot}(\mathcal M_{g,n},\mathbb Q)\otimes_{\Sigma_n} \mathrm {sgn}_n\right)^{\Sigma_n}\longrightarrow \left(C_c^{\hdot}(\mathcal M_{g,n+1},\mathbb Q)\otimes_{\Sigma_{n+1}} \mathrm {sgn},_{n+1}\right)^{\Sigma_{n+1}}.
$$
\par\medskip
\item The induced morphism preservers the weight quotients on the compactly supported cohomology: 
$$\nabla_1\colon \left(\mathrm {Gr}_k^WH_c^{i}(\mathcal M_{g,n},\mathbb Q)\otimes_{\Sigma_n} \mathrm {sgn}_n\right)^{\Sigma_n}\longrightarrow \left(\mathrm {Gr}_k^WH_c^{i}(\mathcal M_{g,n+1},\mathbb Q)\otimes_{\Sigma_{n+1}} \mathrm {sgn}_{n+1}\right)^{\Sigma_{n+1}}.$$
Moreover, under the Chan-Galatius-Payne equivalence, the morphism ${\nabla_1}_{|_{W_0}}$ coincides with the differential $\chi.$ 
\par\medskip
  \item For $g\geq 2$ the total DG-vector space of the complex above is quasi-isomorphic to the hairy Getzler-Kapranov complex $\textsf {H}_{\geq 1}\textsf {GK}_g^{\hdot}.$

\end{enumerate}
\end{Th1}

Theorem $2$ and Theorem $3$ imply the following result:
\begin{Th1} For every $g\geq 2$ the total DG-vector space associated with the double complex:
\begin{equation*}
\begin{CD}
C_c^{\hdot}(\mathcal M_{g,1},\mathbb Q) @>{\nabla_1}>> \dots @>{\nabla_1}>> \left(C_c^{\hdot}(\mathcal M_{g,n},\mathbb Q)\otimes_{\Sigma_n} \mathrm {sgn}_n\right)^{\Sigma_n} @>{\nabla_1}>> \dots
\end{CD}
\end{equation*}
is quasi-isomorphic to $C_c^{\hdot}(\mathcal M_{g},\mathbb Q)[-1].$
\end{Th1}
Applying explicit computations of the cohomology of $\mathcal M_{1,n}$ from \cite{CF} \cite{Pet2} we get.
\begin{Th1} For $g=1$ the cohomology of the total DG-vector space associated with the double complex:
\begin{equation*}
\begin{CD}
C_c^{\hdot}(\mathcal M_{1,1},\mathbb Q) @>{\nabla_1}>> \dots @>{\nabla_1}>> \left(C_c^{\hdot}(\mathcal M_{1,n},\mathbb Q)\otimes_{\Sigma_n} \mathrm {sgn}_n\right)^{\Sigma_n} @>{\nabla_1}>> \dots
\end{CD}
\end{equation*}
is isomorphic to:
$$
\prod_{n=3}^{\infty} (S_{n+1}\oplus \overline{S}_{n+1}\oplus Eis_{n+1})[2n]\oplus \mathbb Q[3]. 
$$
Here $S_k$ (resp. $\overline{S}_k$) is a vector space of holomorphic (resp. antiholomorphic) cusp forms of the weight $k$ and $Eis_k$ is a vector space of the Eisenstein series of the weight $k$.\footnote{This complex is closely related to the cochain complex of an Artin stack $\mathcal M_1$ \cite{Tael}, however, we were unable to find a precise connection.}
\end{Th1}

\subsection{Methods} The main technical tools in this paper are P. Deligne's theory of mixed Hodge structures and constructible sheaves on moduli spaces of tropical curves. P. Deligne's theory is already classical and few words need to be said. Constructible sheaves on moduli spaces of tropical curves can be considered as a geometric avatar of Getzler-Kapranov's theory of modular operads (cf. \cite{GK}) and all our constructions can be painlessly translated to operadic language. We choose to work with constructible sheaves on moduli spaces of tropical curves to make our techniques more flexible (for example the differential of $\textsf {H}_{\geq 1}\textsf{GK}_g^{\hdot}$ is not covered by the Getzler-Kapranov formalism) and also to make a relation to moduli spaces of tropical curves (originated in \cite{CGP1} \cite{CGP2}) more transparent.

\subsection{Structure of the paper} In Section $2$ we recollect some facts about constructible sheaves on diagrams and mixed Hodge structures. In Section $3$ we recall some basic facts from the theory of moduli stacks $\mathcal M_{g,n}$ and the moduli spaces of tropical curves. In this section, we introduce the Getzler-Kapranov complex and reprove an original result of E. Getzler and M. Kapranov concerning the Feynman transform of the modular cooperad $C^{\hdot}(\overline{\mathcal M},\mathbb Q).$ Also in this section, we prove Theorem $1$ and further discuss the cohomology of the Getzler-Kapranov complexes $\textsf W_k\textsf {GK}^{\hdot}_{g,n}$ for small values of $k.$ In Section $4,$ we define the hairy Getzler-Kapranov complex and prove Theorem $2$ and Theorem $3.$ Further, we obtain Theorem $4$ as an immediate corollary and prove Theorem $5.$ 

\subsection{Acknowledgments} I would like to thank Nikita Markarian, Sergei Merkulov, and Thomas Willwacher for fruitful discussions. Special
thanks are due to Thomas Willwacher for introducing the author to almost all problems attacked in this paper and for his input on this work and to
Sergei Merkulov for his constant support and for bringing my attention to the beautiful paper \cite{MW}. Finally, I would like to thank Sergey Shadrin for various important suggestions and comments which helped the author to improve this text. This work was supported by the FNR project number: PRIDE $15/10949314/$GSM

\section{Preliminaries}

\subsection{Notation}

For a natural number $n\in \mathbb N_+$ we will denote by $[n]$ a finite set such that $[n]:=\{1,2,\dots,n\}.$ Let $I$ be a finite set, by $\mathrm {Aut}(I)$ we will denote a group of automorphisms of this set, in the case when $I=[n]$ we will use a notation $\Sigma_n:=\mathrm {Aut}([n])$ for a symmetric group on $n$-letters. We usually work over the field of rational number $\mathbb Q.$ For a $\mathbb Q$-linear representation $V$ of the finite group $G,$ we will denote by $V^G$ (resp. $V_G$) the space of $G$-invariants (resp. $G$-coinvariants). Since the characteristic of the field $\mathbb Q$ is zero we have a canonical isomorphism
$V_G\overset{\sim}{\longrightarrow} V^G$ and hence we will freely switch between invariants and coinvariants. For a finite $S$ we will denote by  $V\langle S\rangle$ a free $\mathbb Q$-vector space $V\langle S\rangle$ generated by the set $S,$ by $\det(S):=\bigwedge^{\dim V\langle S\rangle} V\langle S\rangle$ we denote a determinant of $V\langle S\rangle.$ By $\mathsf{Top}$ we denote a category of locally compact topological spaces equipped with a stratification and by $\mathsf {Cat}$ we denote a $2$-category of all categories. For categories $A$ and $B$ we denote by $\mathsf{Fun}(A,B)$ the corresponding category of functors. 

\subsection{Combinatorial sheaves}

Let $X$ be a topological space equipped with a Whitney stratification $\mathcal S:=\{S_{\alpha}\}.$ Then we have the corresponding category of $\mathcal S$-\textit{smooth combinatorial sheaves} on $X$ denoted by $\textsf{Sh}_{c}(X,\mathcal S).$ This is a full subcategory of the category of sheaves of finite-dimensional $\mathbb Q$-vector spaces on $X.$ By $\textsf {Ch}_c(X,\mathcal S)$ we denote the category of complexes of sheaves with constructible cohomology i.e. $H^{\hdot}(\EuScript K) \in\textsf{Sh}_{c}(X,\mathcal S).$ The corresponding triangulated category will be denoted by $\textsf D_c(X,\mathcal S).$ According to Proposition $1.9$ from \cite{KaSh} a natural comparison functor:
\begin{equation}\label{comp1}
\textsf {D(Sh}_c(X,\mathcal S))\longrightarrow \textsf D_c(X,\mathcal S)
\end{equation}
is the equivalence of triangulated categories. Objects of the category $\textsf D_c(X,\mathcal S)$ will be called \textit{$\mathcal S$-smooth combinatorial DG-sheaves on $X.$}
\par\medskip
Denote by $\textsf S$ a poset associated with the stratified space $(X,\mathcal S).$ By a definition, an object $\alpha$ in $\textsf S$ corresponds to a stratum $S_{\alpha}$ in $\mathcal S$ and for two objects $\alpha$ and $\beta$ in $\textsf S$ we define the arrow $\alpha \rightarrow \beta $ if $S_{\alpha}\subset \overline{S}_{\beta}.$ Hence we construct a functor:
\begin{equation}\label{comp2}
Real\colon \textsf {Sh}_c(X,\mathcal S)\longrightarrow \textsf {Fun}(\textsf S,{\textsf {Vect}}_{\mathbb Q}^f),
\end{equation}
which maps a combinatorial sheaf $\EuScript K$ to the functor $K\in \textsf{Fun}(\textsf S,\textsf {Vect}_{\mathbb Q}^f).$ This functor is defined by the following rule. A value of the functor $K$ at $\alpha$ equals $K_{\alpha}:=H^0(S_{\alpha},\EuScript K),$ and for every pair of adjusted strata $S_{\alpha}\subset \overline{S}_{\beta}$ we have the corresponding \textit{variation operator}:
$$
var_{\alpha,\beta}\colon K_{\alpha}\longrightarrow K_{\beta}.
$$
It is easy to see that $Real$ is the equivalence of abelian categories and hence from \eqref{comp1} we get the following:
\begin{Prop} The composition of the functors above induces the equivalence of the triangulated categories:
$$
\textsf D_c(X,\mathcal S)\overset{\sim}{\longrightarrow} \textsf D(\textsf {Fun(S,Vect}_{\mathbb Q}^f))
$$
\end{Prop}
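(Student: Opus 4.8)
The plan is to chain together the two equivalences already at hand: the comparison equivalence \eqref{comp1} between the derived category of combinatorial sheaves and the constructible derived category, and the ``realization'' equivalence \eqref{comp2} of abelian categories $Real\colon \textsf{Sh}_c(X,\mathcal S)\overset{\sim}{\longrightarrow}\textsf{Fun}(\textsf S,\textsf{Vect}^f_{\mathbb Q})$. First I would invoke \eqref{comp1}: by Proposition $1.9$ of \cite{KaSh} the canonical functor $\textsf D(\textsf{Sh}_c(X,\mathcal S))\to \textsf D_c(X,\mathcal S)$ is an equivalence, so it suffices to identify $\textsf D(\textsf{Sh}_c(X,\mathcal S))$ with $\textsf D(\textsf{Fun}(\textsf S,\textsf{Vect}^f_{\mathbb Q}))$. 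For that I would use the elementary principle that an exact equivalence of abelian categories induces an equivalence of their bounded derived categories: apply the derived functor of $Real$ (which is exact on both sides, hence needs no resolutions) and its quasi-inverse. Composing the two equivalences gives the desired $\textsf D_c(X,\mathcal S)\overset{\sim}{\longrightarrow}\textsf D(\textsf{Fun}(\textsf S,\textsf{Vect}^f_{\mathbb Q}))$.

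The main point requiring genuine verification is the claim that $Real$ is an equivalence of abelian categories, which the text asserts as ``easy to see''. I would spell out the quasi-inverse explicitly: given a functor $K\colon \textsf S\to \textsf{Vect}^f_{\mathbb Q}$, one reconstructs a combinatorial sheaf $\EuScript K$ by declaring, for an open set $U$ meeting strata $\{S_\alpha\}_{\alpha\in A_U}$, that $\EuScript K(U)$ is the limit of the diagram $K|_{A_U}$ (equivalently, sections are compatible families $(s_\alpha)$ with $var_{\alpha,\beta}(s_\alpha)=s_\beta$ whenever $S_\alpha\subset\overline{S}_\beta$). One checks this is a sheaf, is $\mathcal S$-constructible with the prescribed stalks and variation maps, and that the two constructions are mutually inverse up to natural isomorphism; exactness in both directions is immediate since limits over the relevant finite diagrams and the formation of stalks on strata are exact on finite-dimensional vector spaces. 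Faithfulness and fullness follow because a morphism of sheaves is determined by, and can be prescribed by, its effect on the sections over stars of strata.

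The step I expect to be the only real obstacle — and it is more bookkeeping than difficulty — is the compatibility of the variation operators $var_{\alpha,\beta}$ with the poset structure, i.e. that they assemble into an honest functor on $\textsf S$ (transitivity $var_{\beta,\gamma}\circ var_{\alpha,\beta}=var_{\alpha,\gamma}$) and that, conversely, every functor arises this way; this rests on the local conical structure of a Whitney stratification, which guarantees that the restriction maps of a constructible sheaf between nested strata are the ones encoded by the exit-path poset. Once that is in place, the derived equivalence is formal: I would package the above as ``$Real$ exact equivalence of abelian categories $\Rightarrow$ $\mathbb{L}Real=\mathbb{R}Real$ exact equivalence of derived categories'', and then compose with \eqref{comp1} to conclude.
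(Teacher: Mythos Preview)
Your proposal is correct and follows exactly the approach indicated in the paper: the paper gives no separate proof but simply records the Proposition as the consequence of ``$Real$ is the equivalence of abelian categories and hence from \eqref{comp1} we get the following'', which is precisely the chain $\textsf D_c(X,\mathcal S)\overset{\sim}{\longleftarrow}\textsf D(\textsf{Sh}_c(X,\mathcal S))\overset{\sim}{\longrightarrow}\textsf D(\textsf{Fun}(\textsf S,\textsf{Vect}^f_{\mathbb Q}))$ you spell out. Your additional paragraphs unpacking why $Real$ is an equivalence (the quasi-inverse via limits over the stratum poset, and the transitivity of variation maps from the conical local structure) supply the details the paper suppresses under ``easy to see'', but add nothing beyond what the paper intends.
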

We have a derived functor of the global sections with compact support:
$$
R\Gamma_c(X,-)\colon \textsf{D}_{c}(X,\mathcal S)\longrightarrow \textsf D(\textsf {Vect}_{\mathbb Q}^f).
$$
In terms of \eqref{comp2} the derived functor of global sections with compact support can be realised by the following complex \cite{GK}:

\begin{Prop}\label{GK}
The DG-vector space $R\Gamma_c(X,\EuScript K)$ is naturally quasi-isomorphic to (the total DG-vector space arising from) the complex:
$$
\bigoplus_{\dim S_{\alpha}=0}  K_{\alpha}\otimes \textsf {or}_{{\alpha}}\rightarrow \bigoplus_{\dim S_{\alpha}=1}  K_{\alpha}\otimes \textsf {or}_{{\alpha}}\rightarrow\dots
$$
Here $\textsf {or}_{\alpha}:=H_c^{\dim S_{\alpha}}(S_{\alpha},\mathbb Q)$ is a one-dimensional space of orientations and the sum over strata of dimension $m$ is placed in degree $m.$
\end{Prop}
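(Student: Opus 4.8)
The plan is to realise the displayed complex as the $E_1$-page of the spectral sequence attached to the stratification filtration of $X$, and then to prove that this spectral sequence degenerates onto it. For $m\ge 0$ put $X_{\le m}:=\bigcup_{\dim S_\alpha\le m}S_\alpha$; by the frontier condition this is closed in $X$, the set $U_m:=X\setminus X_{\le m-1}$ (the union of the strata of dimension $\ge m$) is open, and $X^{(m)}:=\bigsqcup_{\dim S_\alpha=m}S_\alpha$ is closed in $U_m$ with open complement $U_{m+1}$. For the combinatorial DG-sheaf $\EuScript K$ the open--closed decomposition $X^{(m)}\hookrightarrow U_m\hookleftarrow U_{m+1}$ gives the distinguished triangle
$$R\Gamma_c(U_{m+1},\EuScript K)\longrightarrow R\Gamma_c(U_m,\EuScript K)\longrightarrow R\Gamma_c(X^{(m)},\EuScript K)\overset{+1}{\longrightarrow}.$$
Since $X$ is finite-dimensional the tower $U_0=X\supset U_1\supset\cdots$ is finite, so these triangles assemble into an exact couple, hence into a finitely filtered convergent spectral sequence
$$E_1^{m,q}=H^{m+q}_c(X^{(m)},\EuScript K)\ \Longrightarrow\ H^{m+q}_c(X,\EuScript K),$$
whose $d_1$ is the composite of the connecting map $H^{m+q}_c(X^{(m)},\EuScript K)\to H^{m+q+1}_c(U_{m+1},\EuScript K)$ with the restriction $H^{m+q+1}_c(U_{m+1},\EuScript K)\to H^{m+q+1}_c(X^{(m+1)},\EuScript K)$. (Equivalently one may filter $\EuScript K$ itself by the subobjects supported on the $X_{\le m}$ and apply the triangulated functor $R\Gamma_c(X,-)$.)

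Next I would compute the $E_1$-term. Since $X^{(m)}$ is the disjoint union of the $m$-dimensional strata, $R\Gamma_c(X^{(m)},\EuScript K)=\bigoplus_{\dim S_\alpha=m}R\Gamma_c(S_\alpha,\EuScript K|_{S_\alpha})$. In the stratified spaces relevant here (moduli spaces of tropical curves and their combinatorial relatives) each stratum $S_\alpha$ is an open cell, or a quotient of one by a finite group, which over $\mathbb Q$ does not affect the computation; in particular $S_\alpha$ is simply connected with $H^j_c(S_\alpha,\mathbb Q)=0$ for $j\ne m$ and $H^m_c(S_\alpha,\mathbb Q)=\textsf{or}_\alpha$. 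Hence the $\mathcal S$-smooth DG-sheaf $\EuScript K|_{S_\alpha}$ is quasi-isomorphic to the constant DG-sheaf with value $K_\alpha$ (the value at $\alpha$ of the diagram corresponding to $\EuScript K$ under $Real$, which here is just $R\Gamma(S_\alpha,\EuScript K)$), and the projection formula gives
$$R\Gamma_c(S_\alpha,\EuScript K|_{S_\alpha})\ \simeq\ R\Gamma_c(S_\alpha,\mathbb Q)\otimes K_\alpha\ \simeq\ K_\alpha\otimes\textsf{or}_\alpha[-m].$$
Thus $(E_1^{\bullet,\bullet},d_0,d_1)$ is precisely the double complex whose $m$-th column is $\bigoplus_{\dim S_\alpha=m}K_\alpha\otimes\textsf{or}_\alpha$ and whose total DG-vector space is the complex displayed in the statement. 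When $\EuScript K$ is an ordinary sheaf this page has a single nonzero row, the spectral sequence degenerates at $E_2$, and we read off $H^n_c(X,\EuScript K)=H^n$ of that row at once; for a general DG-sheaf one compares the spectral sequence above with the one of this double complex obtained by first taking internal cohomology — they coincide from $E_1$ on, hence so do their (convergent, finitely filtered) abutments, which yields the asserted quasi-isomorphism $R\Gamma_c(X,\EuScript K)\simeq\mathrm{Tot}(\cdots)$.

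I expect the main obstacle to be the single step that is not purely formal: identifying $d_1$ with the combinatorial differential assembled from the variation operators. For a pair $S_\alpha\subset\overline{S_\beta}$ with $\dim S_\beta=\dim S_\alpha+1$ one must show that the $(\alpha,\beta)$-component of $d_1$ equals $var_{\alpha,\beta}\colon K_\alpha\to K_\beta$ up to sign and up to the canonical identification of $\textsf{or}_\alpha$ with $\textsf{or}_\beta$ across the separating wall; this is a local computation on a normal slice to $S_\alpha$ inside $\overline{S_\beta}$, i.e. the standard Cousin-complex calculation, and putting the connecting map into that model shape is where the care is needed. Everything else — existence and convergence of the spectral sequence, the stalk-versus-sections identification over a cell, and the degeneration — is routine. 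Finally, naturality of the quasi-isomorphism in $\EuScript K$ is automatic, since the skeletal filtration and all the triangles and exact couples above are functorial in $\EuScript K$; the same observation also gives compatibility with the finite-group actions used later in the paper.
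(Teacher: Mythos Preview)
The paper does not actually prove this proposition; it is stated with a reference to Getzler--Kapranov \cite{GK} and then used as a black box. Your approach via the skeletal filtration and the attached exact couple/Cousin complex is the standard argument for statements of this type and is essentially what underlies the cited reference, so there is nothing substantive to compare against. Your restriction to cell-like strata is not a loss of generality either: it is implicit in the statement (the one-dimensionality of $\textsf{or}_\alpha$ and the absence of any other $H^j_c(S_\alpha,\mathbb Q)$ are being assumed) and is satisfied in every application the paper makes.

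One small point worth tightening: the step ``the two spectral sequences coincide from $E_1$ on, hence the complexes are quasi-isomorphic'' is not quite automatic --- matching $(E_1,d_1)$-pages do not by themselves force the higher differentials to agree unless one has an actual filtered map realising the comparison. In your situation this is easy to supply: since $R\Gamma_c(S_\alpha,\mathbb Q)\simeq\textsf{or}_\alpha[-m]$ is already a one-term complex, the identification $R\Gamma_c(S_\alpha,\EuScript K|_{S_\alpha})\simeq K_\alpha\otimes\textsf{or}_\alpha[-m]$ can be made strict, and then the Postnikov tower assembled from your open--closed triangles \emph{is} the displayed double complex once the connecting maps are identified with the $var_{\alpha,\beta}$ (which, as you rightly flag, is the one genuinely non-formal step). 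Alternatively, since we are over a field, an isomorphism on cohomology already yields a quasi-isomorphism of complexes; naturality then follows, as you note, from the functoriality of the skeletal filtration.
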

Let $\EuScript K$ be a combinatorial DG-sheaf. For $k\in\mathbb Z$ we denote by $H^k(\EuScript K)\in \textsf {Sh}_c(X,\mathcal S)$ the corresponding $k$-cohomology of the combinatorial DG-sheaf.
\subsection{Sheaves on diagrams}

Let $C$ be a category, in this paper by a $C$-\textit{diagram} $X_{C}$ we will understand a functor $X_C\colon C\longrightarrow \mathsf {Top},$ such that for every $i\in C$ a slice category $C_i$ defines the stratification on $X_i,$ denoted by $\mathcal S_i.$ Following P. Deligne \cite{SD} (Exposé $\mathrm V^{bis}$) we have a notion of a sheaf on such objects. Let $X_C$ be a diagram represented by a system $\{X_i\}_{i\in C},$ where $f_{ij}\colon X_i\hookrightarrow X_j.$ Then using the construction from \textit{ibid.}, one may define a $2$-functor $C\longrightarrow \textsf {Cat}$ by sending each object $i\in C$ to the category of $\mathcal S_i$-smooth combinatorial sheaves $\textsf {Sh}_c(X_i,\mathcal S_i)$ and every morphism $f\colon i\longrightarrow j$ to a pullback functor $f^*_{ij}\colon \textsf {Sh}_c(X_j,\mathcal S_j)\longrightarrow \textsf {Sh}_c(X_i,\mathcal S_i).$ We can consider a category of lax sections of the corresponding Grothendieck fibration denoted by $\textsf {Sh}^{lax}_c(X_C,\mathcal S)$ \cite{Gro}. This is an abelian category, with the corresponding category of cocartesian sections will be denoted by $\textsf {Sh}_c(X_C,\mathcal S).$ Objects of the category $\textsf {Sh}^{lax}_c(X_C,\mathcal S)$ (resp. $\textsf {Sh}_c(X_C,\mathcal S)$) will be called $\mathcal S$-smooth lax combinatorial sheaves on the $C$-diagram $X_C$ (resp. $\mathcal S$-smooth combinatorial sheaves on the $C$-diagram $X_C).$ For the reader's convenience we will unpack these definitions:
\begin{Def}
An \textit{$\mathcal S$-smooth lax combinatorial sheaf $\EuScript P$ on the $C$-diagram $X_C$} is a combinatorial sheaf $\EuScript P_i$ on ${X_i}$ for every $i\in C,$ such that for every $m\colon i\longrightarrow j$ we have a morphism:
$$
\vartheta\colon m^*\EuScript P_j\longrightarrow \EuScript P_i
$$
which is compatible with compositions in $C.$
\par\medskip
We say that $\EuScript P$ is an \textit{$\mathcal S$-smooth combinatorial sheaf on the $C$-diagram $X_C$} if for all $m\colon i\longrightarrow j$ a morphism $\vartheta$ is the isomorphism.
\end{Def}
\textit{Mutatis mutandis} we define a $2$-functor $C\longrightarrow \textsf {Cat}$ which takes every object $i$ to the category of \textit{all sheaves} of finite-dimensional vector spaces $\textsf {Sh}(X_i).$ The corresponding category of lax sections will be denoted by $\textsf {Sh}(X_C).$ We denote by $\textsf D_{lax}(X_C)$ the associated derived category and by $\textsf D_c(X_C,\mathcal S)$ the triangulated subcategory with the cohomology in $\textsf {Sh}_c(X_C,\mathcal S).$ Objects of this category will be called $\mathcal S$-smooth combinatorial DG-sheaves on the $C$-diagram $X_C.$ Once again for the reader's convenience, we will unpack this definition:
\begin{Def}\label{DGSh}
An \textit{$\mathcal S$-smooth DG-combinatorial sheaf $\EuScript P$ on} $X_C$ is a complex of lax sheaves on $X_C$ such that for every $i\in C$ the cohomology of these sheaves are combinatorial and for every $m\colon i\longrightarrow j$ we have a quasi-isomorphism:
$$
\vartheta\colon m^*\EuScript P_j\overset{\sim}{\longrightarrow} \EuScript P_i
$$
which is compatible with compositions in $C.$
\end{Def}

Note that by the definition of the $C$-diagram $X_C$ for any morphism of stratified spaces $f_{ij}\colon (X_i,\mathcal S_i)\longrightarrow (X_j,\mathcal S_j)$ we have the induced functor between posets $F_{ij}\colon \textsf S_i\longrightarrow \textsf S_j.$ Analogous to the definitions above one may consider a $2$-functor $C\longrightarrow \textsf {Cat}$ which sends every $i\in C$ to the category $\textsf {Fun(S}_i,\textsf {Vect}_{\mathbb Q}^f)$ and every morphism $i\longrightarrow j$ to a restriction functor: 
$$Res(F_{ij})\colon \textsf {Fun(S}_j,\textsf {Vect}_{\mathbb Q}^f)\longrightarrow \textsf {Fun(S}_i,\textsf {Vect}_{\mathbb Q}^f).$$ 
The corresponding category of lax sections will be denoted by $\textsf {Sec}(C,\textsf {Fun}).$ By $\textsf {D}_{cocart}(\textsf {Sec}(C,\textsf {Fun}))$ we will denote the triangulated category with the cocartesian cohomology. Then we have the following:

\begin{Prop} The functor $Real$ induces an equivalence of triangulated categories:

\begin{equation}\label{comp5}
\textsf D_c(X_C,\mathcal S)\overset{\sim}{\longrightarrow} \textsf {D}_{cocart}(\textsf {Sec}(C,\textsf {Fun}))
\end{equation}

\end{Prop}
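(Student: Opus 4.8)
The plan is to bootstrap the statement from the level-wise equivalence $Real_i$ by formal category-theoretic arguments, the only substantial input being a diagram version of the Kashiwara--Schapira comparison \eqref{comp1}. First, recall that for each $i\in C$ the functor $Real$ of \eqref{comp2} is an equivalence of abelian categories $Real_i\colon \textsf{Sh}_c(X_i,\mathcal S_i)\overset{\sim}{\longrightarrow}\textsf{Fun}(\textsf S_i,\textsf{Vect}_{\mathbb Q}^f)$. Unravelling the constructions, for every morphism $m\colon i\to j$ in $C$ the pullback $m^*\colon \textsf{Sh}_c(X_j,\mathcal S_j)\to\textsf{Sh}_c(X_i,\mathcal S_i)$ corresponds under $Real$ to restriction along the induced map of posets $F_{ij}\colon\textsf S_i\to\textsf S_j$; that is, there is a canonical natural isomorphism $Real_i\circ m^*\cong Res(F_{ij})\circ Real_j$, immediate from the definitions of $m^*$ and of the variation operators, and compatible with composition in $C$. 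These data say precisely that the two pseudofunctors $C\to\textsf{Cat}$ given by $i\mapsto\textsf{Sh}_c(X_i,\mathcal S_i)$ and $i\mapsto\textsf{Fun}(\textsf S_i,\textsf{Vect}_{\mathbb Q}^f)$ are equivalent, via a pseudonatural transformation with components the $Real_i$.

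Next, a pseudonatural equivalence of pseudofunctors over $C$ induces an equivalence between the associated Grothendieck fibrations and hence between their categories of lax sections; moreover it carries cocartesian sections to cocartesian sections, since it sends each structure morphism $\vartheta\colon m^*\EuScript P_j\to\EuScript P_i$ to the corresponding morphism of the image object and an equivalence both preserves and reflects isomorphisms. Therefore $Real$ induces an equivalence of abelian categories $\textsf{Sh}^{lax}_c(X_C,\mathcal S)\overset{\sim}{\longrightarrow}\textsf{Sec}(C,\textsf{Fun})$ which restricts to an equivalence on the full subcategories of cocartesian objects. An equivalence of abelian categories is exact, so it induces an equivalence of derived categories $\textsf D(\textsf{Sh}^{lax}_c(X_C,\mathcal S))\overset{\sim}{\longrightarrow}\textsf D(\textsf{Sec}(C,\textsf{Fun}))$.

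Finally, I identify the two sides with the categories of the statement. Since each $m^*$ is exact, for a complex of lax sheaves the cohomology of $\vartheta$ is obtained by applying $m^*$ to the cohomology sheaves; hence Definition \ref{DGSh} amounts to requiring that the cohomology lax sheaves be cocartesian, and by the diagram analogue of \eqref{comp1} the full subcategory of $\textsf D(\textsf{Sh}^{lax}_c(X_C,\mathcal S))$ consisting of complexes with cocartesian cohomology is canonically equivalent to $\textsf D_c(X_C,\mathcal S)$. On the combinatorial side the subcategory of $\textsf D(\textsf{Sec}(C,\textsf{Fun}))$ with cocartesian cohomology is $\textsf D_{cocart}(\textsf{Sec}(C,\textsf{Fun}))$ by definition. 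The derived equivalence of the previous paragraph preserves and reflects the property ``cohomology is cocartesian'' (it does so level-wise, and this property is detected level-wise), so it restricts to the required equivalence $\textsf D_c(X_C,\mathcal S)\overset{\sim}{\longrightarrow}\textsf D_{cocart}(\textsf{Sec}(C,\textsf{Fun}))$, and by construction the restricted functor is the one induced by the $Real_i$.

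The main obstacle is the diagram analogue of \eqref{comp1} invoked above: that the canonical functor from the derived category of the abelian category $\textsf{Sh}^{lax}_c(X_C,\mathcal S)$ to the full triangulated subcategory of $\textsf D_{lax}(X_C)$ with constructible cohomology is an equivalence. For a single stratified space this is Proposition $1.9$ of \cite{KaSh}; over a $C$-diagram one wants to run essentially the same argument --- constructible combinatorial sheaves form a thick abelian subcategory closed under extensions, with enough objects to build resolutions --- carried out section-wise over $C$ and propagated across morphisms using the exactness of each $m^*$ (which sends combinatorial resolutions to combinatorial resolutions). I expect the delicate points to be the interplay between lax sections and cocartesian sections, so that the subcategories with ``cocartesian cohomology'' on the two sides genuinely correspond, and, when $C$ is infinite, keeping track of boundedness; the remainder is the same formal category theory that produces the single-space statement from \eqref{comp2}.
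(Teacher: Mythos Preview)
Your proof is correct and follows essentially the same approach as the paper: the key step is exactly the verification that under $Real$ the pullback $m^*$ corresponds to the restriction functor $Res(F_{ij})$, after which the rest is formal category theory with lax/cocartesian sections. The paper's own proof is a single sentence asserting precisely this correspondence and declaring it a direct verification; your version is considerably more detailed and even flags the diagram analogue of \eqref{comp1}, a point the paper leaves implicit.
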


\begin{proof} The proof boils down to checking that under \eqref{comp2} the pullback functor $f_{ij}^*$ corresponds to the restriction functor $Res(F_{ij})$ which is a direct verification.

\end{proof}

Denote by $\widetilde{X}_C:=\clim_C X_C$ the corresponding colimit in the category of topological spaces. For every $i\in C$ we denote by $v_i\colon X_i\longrightarrow \widetilde{X}_C$ a canonical morphism.  By $\textsf D(\widetilde{X}_C)$ we will denote a derived category of sheaves of vector spaces on the topological space $\widetilde {X}.$ We have a natural comparison functor:
\begin{equation}\label{comp3}
Rv_*\colon\textsf D_c(X_C,\mathcal S)\longrightarrow \textsf D(\widetilde{X}_C).
\end{equation}
defined on objects by the following rule:
$$
Rv_*\colon \EuScript P\longmapsto \hlim_{i\in C} Rv_{i*}\EuScript P_i,\qquad \EuScript P\in\textsf D_c(X_C,\mathcal S).
$$
Note that the homotopy limit above is well defined since the category of sheaves is complete. One may show that \eqref{comp3} induces the fully faithful embedding of triangulated categories. By a \textit{constant sheaf on the $C$-diagram $X_C$} we understand an object $\underline{\mathbb Q}_{X_C}$ in the category $\textsf D_c(X_C,\mathcal S),$ defined as follows: for every $i\in C$ the value of $\underline{\mathbb Q}_{X_C}$ at $i$ is a one-dimensional constant sheaf on $X_i,$ and connecting morphisms are identical.
\par\medskip
By a morphism of $C$-diagrams $P\colon X_C\longrightarrow Y_C$ we understand a natural transformation between functors $X_C$ and $Y_C,$ such that every square of the natural transformation is fibered. If every morphism $P_i$ in the natural transformation $P$ is closed (resp. open) inclusion we call the morphism $P$ closed (resp. open) inclusion. For an inclusion $P\colon X_C\longrightarrow Y_C$ of $C$-diagrams we denote by $(Y\setminus X)_C$ the corresponding complement $C$-diagram defined by the obvious rule. Note that if $P$ is open (resp. closed) the corresponding inclusion $(Y\setminus X)_C\longrightarrow Y_C$ will be closed (resp. open). Due to the proper base change theorem, we have a functor of $!$-extension along open or (resp. closed) morphisms. The corresponding standard restriction functors are also well defined. Therefore if we have a sequence of morphisms of $C$-diagrams $j\colon U_C\hookrightarrow X_C,$ $\Delta\colon Z_C\hookrightarrow X_C,$ such that $j$ is an open inclusion and $\Delta$ is an inclusion of the corresponding closed complement we can apply the standard technique of Gysin triangles (cf. \cite{Beh}). That means that with a sequence of functors:
\begin{equation}\label{tr}
{j_!} \colon \textsf D_c(U_C,\mathcal S)\longleftrightarrow\textsf D_c(X_C,\mathcal S)\colon j^*,\qquad \Delta^*\colon\textsf D_c(X_C,\mathcal S) \longleftrightarrow\textsf D_c(Z_C,\mathcal S)\colon \Delta_*,
\end{equation}
we associate the standard distinguished triangle in the category $\textsf D_c(X_C,\mathcal S):$
$$
j_!j^*\longrightarrow \mathrm {Id}\longrightarrow \Delta_*\Delta^*\longrightarrow j_!j^*[1]\longrightarrow.
$$
\par\medskip
We define a derived functor of the global sections with compact support:
\begin{equation}\label{compa}
R\Gamma_c(X_C,-)\colon \textsf {D}_c(X_C,\mathcal S)\longrightarrow \textsf D(\textsf {Vect}_{\mathbb Q}),
\end{equation}
by the following rule:
$$
R\Gamma_c(X_C,\EuScript  P):=\hlim_{i\in C} R\Gamma_c(X_i,\EuScript P_i),
$$
where $\EuScript P\in \textsf D_{c}(X_C,\mathcal S)$ and connecting homomorphisms are defined by composition with derived direct image functors $Rf_{ij*}$ (these morphisms are well defined since $f_{ij}$ are closed morphisms and therefore a $!$-extension can be identified with the standard extension functor). By construction we have a canonical morphism:
\begin{equation}\label{comp4}
R\Gamma_c(\widetilde{X}_C,-)\longrightarrow R\Gamma_c(X_C,-).
\end{equation}

\subsection{P. Deligne's resolution} In this subsection, we recall some facts about the mixed Hodge structures that we will need.
\par\medskip
Let $\mathcal X$ be a smooth and separated Deligne-Mumford stack equipped with a compactification $\mathcal X\hookrightarrow \mathcal W,$ such that the complement    $\mathcal W\backslash \mathcal X:=\mathcal D$ is a divisor with normal crossings. Note that the divisor $\mathcal D$ is          naturally stratified $\{\mathcal D_p\}_{p\in \mathbb N}$, where a stratum $\mathcal D_p$ contains points of multiplicity at least $p.$ By             $\widetilde{\mathcal D}_p\rightarrow \mathcal D_p$ we will denote the corresponding (smooth) normalised stack. One may think about an element $z$      in $\widetilde{\mathcal D}_p$ as a point $x\in \mathcal D_p$ and $p$ components of $\mathcal D$ through $z.$ A set of these components will be         denoted by $E_p(z).$ We also set $\widetilde{\mathcal D}_0=\mathcal D_0=\mathcal W.$ Let $\widetilde{\mathcal D}_{p-1,p}$
be space whose points are pairs $(y,L),$ where $y=(x,E_p(y)) \in \widetilde{\mathcal D}_p$ and $L\in E_p(y).$ Note that we have a natural correspondence:
\begin{equation*}
\begin{diagram}[height=2.5em,width=2.5em]
\widetilde{\mathcal D}_{p-1,p} &   \rTo^{\xi_{p-1,p}}   &   \widetilde{\mathcal D}_{p-1}  &  \\
\dTo^{\pi_{p-1,p}} & &  && \\
\widetilde{\mathcal D}_p &      &    \\
\end{diagram}
\end{equation*}
Most of the time we will omit indices in the notation of morphisms above.
Note that by varying point $z$ in $\widetilde{\mathcal D}_p$ we obtain a local system $E_p$ on $\widetilde{\mathcal D}_p$ with the corresponding determinant local system $\varepsilon_p:=\det(E_p).$ We have the following isomorphism:
$$
\pi^!\varepsilon_p\cong \pi^*\varepsilon_p\otimes or_{\pi}\cong\xi ^*\varepsilon_{p-1}.
$$
Where by $or_{\pi}:=\pi^!\underline{\mathbb Q}_{\widetilde{\mathcal D}_p}$ we have denoted a relative dualising (orientation) sheaf. Therefore we can define a pull-push homomorphism:
\begin{equation}
\begin{diagram}[height=2.5em,width=2.5em]
C^{\hdot}(\widetilde{\mathcal D}_{p-1},\varepsilon_{p-1}) &   \rTo^{\xi^*}   & C^{\hdot}(\widetilde{\mathcal                                      D}_{p-1,p},\xi^*\varepsilon_{p-1})\cong C^{\hdot}(\widetilde{\mathcal D}_{p-1,p},\pi^!\varepsilon_{p}) &   \rTo^{\pi_!}   &                         C^{\hdot}(\widetilde{\mathcal D}_{p},\varepsilon_{p}) \\
\end{diagram}
\end{equation}
Complexes that compute the cohomology of $\widetilde{\mathcal D}_p$ with coefficients in $\varepsilon_p$ naturally form a DG-vector space:
\begin{equation}
\begin{diagram}[height=3em,width=3em]
C^{\hdot}(\mathcal W,\mathbb Q)&   \rTo^{\pi_!\xi^*}   & C^{\hdot}(\widetilde{\mathcal D}_1,\varepsilon_{1})&   \rTo^{\pi_!\xi^*}   & \dots &     \rTo^{\pi_!\xi^*}   & C^{\hdot}(\widetilde{\mathcal D}_p,\varepsilon_{p})&   \rTo^{\pi_!\xi^*}   &\dots
\end{diagram}
\end{equation}
The following Proposition should be well known to experts:
\begin{Prop}\label{D1}

The total DG-vector space of the complex above is quasi-isomorphic to $C_c^{\hdot}(\mathcal X,\mathbb Q).$

\end{Prop}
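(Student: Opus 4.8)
The plan is to identify the complex
$$
C^{\hdot}(\mathcal W,\mathbb Q)\xrightarrow{\pi_!\xi^*} C^{\hdot}(\widetilde{\mathcal D}_1,\varepsilon_1)\xrightarrow{\pi_!\xi^*}\dots\xrightarrow{\pi_!\xi^*} C^{\hdot}(\widetilde{\mathcal D}_p,\varepsilon_p)\xrightarrow{\pi_!\xi^*}\dots
$$
with a complex of sheaves (or rather, complexes of sheaves) on $\mathcal W$ whose total object is the $!$-extension $u_!\underline{\mathbb Q}_{\mathcal X}$, where $u\colon\mathcal X\hookrightarrow\mathcal W$ is the open inclusion; then $R\Gamma(\mathcal W,-)$ recovers $C_c^{\hdot}(\mathcal X,\mathbb Q)$. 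First I would set up, for each $p$, the normalised stack $\widetilde{\mathcal D}_p$ with its proper map $a_p\colon\widetilde{\mathcal D}_p\to\mathcal W$ (with $a_0=\mathrm{id}$), and consider the pushforwards $R{a_p}_*\varepsilon_p$. Since $a_p$ is finite (a normalisation of a closed substack), $R{a_p}_*\varepsilon_p$ is concentrated in degree zero and equals ${a_p}_*\varepsilon_p$; its global sections are $C^{\hdot}(\widetilde{\mathcal D}_p,\varepsilon_p)$. So it suffices to show that the complex of sheaves
$$
\underline{\mathbb Q}_{\mathcal W}\xrightarrow{\ } {a_1}_*\varepsilon_1\xrightarrow{\ }\dots\xrightarrow{\ }{a_p}_*\varepsilon_p\xrightarrow{\ }\dots
$$
on $\mathcal W$, with differential induced by $\pi_!\xi^*$, has total complex quasi-isomorphic to $u_!\underline{\mathbb Q}_{\mathcal X}$, and then apply $R\Gamma(\mathcal W,-)$, using that $R\Gamma$ commutes with the (locally finite, hence convergent) total complex.

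The identification of the complex of sheaves is a local, stratum-by-stratum check, which is where the real content lies. I would argue by restricting to the strata of $\mathcal W$ determined by the normal-crossings divisor: over a point lying on exactly $k$ local branches $L_1,\dots,L_k$ of $\mathcal D$, the stalk of ${a_p}_*\varepsilon_p$ is $\bigwedge^p(\mathbb Q^k)$ realised as $\bigoplus_{|S|=p,\ S\subseteq\{1,\dots,k\}}\det(\mathbb Q\langle S\rangle)$, and the differential $\pi_!\xi^*$ becomes, up to sign, the Koszul differential $\bigwedge^p\mathbb Q^k\to\bigwedge^{p+1}\mathbb Q^k$ given by wedging with $\sum_i e_i$ (the pull-push along a branch $L\in E_p$ precisely adds one more branch with the appropriate sign twist, which is exactly the content of the isomorphism $\pi^!\varepsilon_p\cong\xi^*\varepsilon_{p-1}$ recorded just above). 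The Koszul complex $(\bigwedge^{\hdot}\mathbb Q^k, \,e_1+\dots+e_k\wedge-)$ is acyclic for $k\geq 1$ and equals $\mathbb Q$ in degree $0$ for $k=0$; hence the total complex of sheaves is exact off $\mathcal X$ and is $\underline{\mathbb Q}$ on $\mathcal X$, i.e. it is quasi-isomorphic to $u_!\underline{\mathbb Q}_{\mathcal X}$. Equivalently, in the combinatorial/constructible language of Proposition 2.2.4 and Proposition 2.2.6, one checks this fibrewise on the poset of strata.

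The main obstacle is bookkeeping of orientations and signs: one must check that the identifications $\pi^!\varepsilon_p\cong\pi^*\varepsilon_p\otimes or_\pi\cong\xi^*\varepsilon_{p-1}$ are compatible with the stratumwise Koszul picture coherently in $p$, so that the composite differentials vanish \emph{on the nose} (not just up to a unit) and the total complex is genuinely a complex, and that the resulting sign conventions match those implicit in forming the total DG-vector space. A clean way to handle this is to fix, near each point, an ordering of the branches and use the standard convention $\det(\mathbb Q\langle S\rangle)\hookrightarrow\det(\mathbb Q\langle S\cup\{L\}\rangle)$, $\omega\mapsto e_L\wedge\omega$; then the fact that all the maps are instances of one Koszul differential makes $\pi_!\xi^*\circ\pi_!\xi^*=0$ automatic, and independence of the ordering follows because a change of ordering multiplies all terms in a fixed degree by the same sign. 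Once the sheaf-level complex is identified with $u_!\underline{\mathbb Q}_{\mathcal X}$, applying $R\Gamma(\mathcal W,-)=R\Gamma_c(\mathcal W,-)$ (properness of $\mathcal W$) and commuting it past the total complex yields the desired quasi-isomorphism with $R\Gamma_c(\mathcal X,\mathbb Q)=C_c^{\hdot}(\mathcal X,\mathbb Q)$, which is standard and causes no trouble.
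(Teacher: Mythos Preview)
Your argument is correct. You realise the complex as $R\Gamma(\mathcal W,-)$ of a complex of sheaves $\underline{\mathbb Q}_{\mathcal W}\to a_{1*}\varepsilon_1\to\cdots$ on $\mathcal W$, identify its stalks over a point lying on $k$ branches with the Koszul complex $(\bigwedge^{\hdot}\mathbb Q^k,\ (\sum_i e_i)\wedge-)$, and conclude that the total object is $u_!\underline{\mathbb Q}_{\mathcal X}$; this is exactly the standard direct proof and all steps (finiteness of $a_p$, the Koszul acyclicity for $k\geq 1$, the sign bookkeeping via a local ordering of branches) are handled correctly.

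The paper, by contrast, gives no argument at all: it simply observes that the statement is Poincar\'e--Verdier dual to Deligne's original computation for $H^{\hdot}(\mathcal X,\mathbb Q)$ and refers to \cite{DelH}. Your route is the same as Deligne's in spirit (his residue complex is the Verdier dual of your Koszul complex), but you carry it out directly for $C_c^{\hdot}$ rather than invoking duality. What you gain is self-containment and an explicit stalkwise picture that makes the later graph-complex identifications transparent; what the paper's one-line reference gains is brevity and a clean link to the literature where the compatibility of signs is already recorded. Either is acceptable here.
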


\begin{proof}
This proposition is Poincaré-Verdier dual to the original result of P. Deligne for the cohomology of $\mathcal W.$ For the details, we refer to P. Deligne's beautiful paper       \cite{DelH}.\footnote{The construction in \textit{ibid.} was given for complex varieties but can be generalised to the case of Deligne-Mumford stacks.}

\end{proof}

According to P. Deligne, the compactly supported cohomology $H_c^k(\mathcal X,\mathbb Q)$ of a smooth Deligne-Mumford stack $\mathcal X$ carries a canonical \textit{mixed Hodge structure} and hence the increasing \textit{weight filtration}:
$$
W_0H_c^k(\mathcal X,\mathbb Q) \subset \dots \subset W_{k-1}H_c^k(\mathcal X,\mathbb Q)\subset H_c^k(\mathcal X,\mathbb Q)
$$
Following P. Deligne one can use the DG-vector space above to define the weight filtration on $H^{\hdot}_c(\mathcal X,\mathbb Q).$ For $j\in \mathbb N$ denote by $C_{j}^{\hdot}$ the following cochain complex:
\begin{equation}
\begin{diagram}[height=3em,width=3em]
H^j(\mathcal W,\mathbb Q)&   \rTo^{\pi_!\xi^*}   & H^j(\widetilde{\mathcal D}_1,\varepsilon_1)&   \rTo^{\pi_!\xi^*}   & \dots &                       \rTo^{\pi_!\xi^*}   & H^j(\widetilde{\mathcal D}_p,\varepsilon_p) & \rTo^{\pi_!\xi^*}\dots
\end{diagram}
\end{equation}
Then we have:

\begin{Prop}\label{D2} For every smooth and separated Deligne-Mumford stack $\mathcal X,$ compactified by the divisor $\mathcal D$ with normal crossings we have the        following description of the weight filtration on $H_c^{\hdot}(\mathcal X,\mathbb Q):$
$$
H^i(C_j)\cong \mathrm {Gr}_j^WH^{i+j}_c(\mathcal X,\mathbb Q).
$$

\end{Prop}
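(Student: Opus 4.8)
The plan is to recognise $C_j^{\bullet}$ as the $E_2$-page of the weight spectral sequence computing $H^{\bullet}_c(\mathcal X,\mathbb Q)$ --- the Poincar\'e-Verdier dual of Deligne's weight spectral sequence for $H^{\bullet}(\mathcal X,\mathbb Q)$ --- and to invoke its degeneration at $E_2$. Concretely, view the DG-vector space of Proposition~\ref{D1} as the total complex of the double complex $K^{p,q}:=C^{q}(\widetilde{\mathcal D}_p,\varepsilon_p)$ (with $\widetilde{\mathcal D}_0=\mathcal W$, $\varepsilon_0=\underline{\mathbb Q}$), the vertical differential being the cochain differential computing $H^{\bullet}(\widetilde{\mathcal D}_p,\varepsilon_p)$ and the horizontal differential being $\pi_!\xi^{*}\colon K^{p,\bullet}\to K^{p+1,\bullet}$. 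By Proposition~\ref{D1} the total complex $\mathrm{Tot}(K)$ computes $H^{\bullet}_c(\mathcal X,\mathbb Q)$; filter it by the decreasing column filtration $F^{p}\mathrm{Tot}(K):=\bigoplus_{p'\geq p}K^{p',\bullet}$, which is a filtration by subcomplexes since neither differential lowers the column index. Following Deligne, the weight filtration on $H^{n}_c(\mathcal X,\mathbb Q)$ is the abutment filtration of the associated spectral sequence, reindexed so that $W_j H^{n}_c=F^{\,n-j}H^{n}_c$.

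First I would run the spectral sequence, computing vertical cohomology first: $E_1^{p,q}=H^{q}(\widetilde{\mathcal D}_p,\varepsilon_p)$, sitting in total degree $p+q$. Each $\widetilde{\mathcal D}_p$ is a smooth proper Deligne-Mumford stack, and $\varepsilon_p=\det(E_p)$ is a rank-one $\mathbb Q$-local system whose monodromy factors through the sign of the permutation action on the $p$ local branches of $\mathcal D$, hence is finite; pulling back to the finite \'etale cover of $\widetilde{\mathcal D}_p$ that trivialises $\varepsilon_p$ exhibits $H^{q}(\widetilde{\mathcal D}_p,\varepsilon_p)$ as a direct summand of the $q$-th cohomology of a smooth proper stack, so it is a pure Hodge structure of weight $q$. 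The differential $d_1$ is induced by $\pi_!\xi^{*}$, so $E_2^{p,q}=H^{p}(C_q^{\bullet})$ by the very definition of $C_q^{\bullet}$. Since $(\mathrm{Tot}(K),F)$ underlies a mixed Hodge complex computing $R\Gamma_c(\mathcal X,\mathbb Q)$ --- the complexes $C^{\bullet}(\widetilde{\mathcal D}_p,\varepsilon_p)$ being, up to the column shift, the weight-graded pieces of the Poincar\'e-Verdier dual of the logarithmic de Rham complex of $(\mathcal W,\mathcal D)$ --- the spectral sequence degenerates at $E_2$: any $d_r$ with $r\geq 2$ is a morphism of mixed Hodge structures between pure pieces of weights $q$ and $q-r+1$, hence zero by strictness. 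Therefore $\mathrm{Gr}^{W}_j H^{i+j}_c(\mathcal X,\mathbb Q)=\mathrm{Gr}_F^{\,i}H^{i+j}_c(\mathcal X,\mathbb Q)\cong E_\infty^{i,j}=E_2^{i,j}=H^{i}(C_j^{\bullet})$, which is the assertion.

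The substantive point is the one I have glossed: that the quasi-isomorphism of Proposition~\ref{D1} upgrades to an isomorphism of mixed Hodge complexes, so that the combinatorial column filtration really computes Deligne's canonical weight filtration and the spectral sequence degenerates at $E_2$. For complex varieties this is \cite{DelH}; the Deligne-Mumford case is the extension recorded in the footnote to Proposition~\ref{D1} (alternatively one could induct on the number of irreducible components of $\mathcal D$ using the Gysin triangles \eqref{tr} and checking weight-strictness at each step, which essentially reproves that part of \cite{DelH}). Beyond this, the only delicate thing is the degree bookkeeping --- that with the normalisation of Proposition~\ref{D1} the term $K^{p,q}$ lies in total degree $p+q$ and contributes weight $q$, which is exactly what pins down the shift in the statement (relating $H^{i}(C_j^{\bullet})$ to compactly supported cohomology in degree $i+j$); the identification of $d_1$ with $\pi_!\xi^{*}$ and the purity of $H^{q}(\widetilde{\mathcal D}_p,\varepsilon_p)$ are then routine.
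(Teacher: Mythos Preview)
Your argument is correct and is precisely the content the paper is pointing to: the paper's own proof is a bare reference to \cite{DelH}, and what you have written is the standard unpacking of that reference --- the column-filtered double complex of Proposition~\ref{D1}, purity of $H^{q}(\widetilde{\mathcal D}_p,\varepsilon_p)$, and $E_2$-degeneration of the resulting weight spectral sequence. So there is no difference in approach, only in the level of detail.
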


\begin{proof}

Once again we refer to \cite{DelH}.
\end{proof}

\section{Sheaves on moduli spaces of tropical curves}

\subsection{Tropical curves}

By a graph $\Gamma$ we understand a connected graph with loops and parallel edges allowed. More precisely a (nonnecessary connected) graph $\Gamma$ is a triple $(H(\Gamma),\sigma_1,\sim)$ where $H(\Gamma)$ is a finite set called the set of \textit{half-edges} of a graph $\Gamma,$ a fixed point free involution $\sigma_1\colon H(\Gamma)\longrightarrow H(\Gamma)$ with a set of orbits $E(\Gamma):=H(\Gamma) / \sigma_1$ called the set of \textit{edges} of a graph $\Gamma$ and an equivalence relation $\sim$ on $H(\Gamma)$ with the corresponding set of equivalence classes denoted by     $V(\Gamma):=H(\Gamma)/ \sim$ and called the set of \textit{vertices} of $\Gamma.$ For a vertex $v\in V(\Gamma)$ we will denote by                         $H_{v}(\Gamma):=p^{-1}(v),$ a set of half-edges attached to the vertex $v,$ where $p\colon H(\Gamma)\longrightarrow V(\Gamma)$ is a canonical              projection. A number of elements in $H(v)$ will be called the \textit{valency} of the vertex $v$ and denoted by $val(v).$ It is possible to consider         $\Gamma$ as a one-dimensional CW complex and hence we will call $\Gamma$ a \textit{connected graph} if it is connected as a one-dimensional CW complex, further we assume that all graphs are connected. By a \textit{weighted $n$-marked graph} we understand a triple $(\Gamma,w,m)$ where $\Gamma$ is a graph together with function                           $w\colon V(\Gamma)\longrightarrow \mathbb N$ called the \textit{weight (genus) function} and the function $m\colon \{0,\dots n\}\longrightarrow V(\Gamma)$ called             $n$-\textit{marking}. The \textit{genus} of a weighted $n$-marked graph $(\Gamma,w,m)$ is defined by the standard formula:
 $$g:=b_1(\Gamma)+\sum_{v\in V(\Gamma)} w(v),$$ 
 where $b_1(\Gamma)=|E(\Gamma)|-|V(\Gamma)|+1$ is the first Betti number of $\Gamma,$ considered as a one dimensional CW complex. For a vertex $v$ in a weighted marked graph we will use the following notation $n_v:=val(v)+|m^{-1}(v)|.$ We say that a graph is \textit{stable} if for every vertex $v$ we have the following equality $2w(v)-2+n_v>0.$ Sometimes it will be convenient to consider markings          $m^{-1}(v)$ attached to a vertex as marked \textit{hairs} (legs) attached to $v.$ We will freely switch between these two notation.
\par\medskip
Stable weighted $n$-marked graphs of genus $g$ naturally form a category denoted by $J_{g,n}.$ Objects of this category are stable        weighted $n$-marked graphs and morphisms are given by compositions of contractions of edges and isomorphisms which preserve markings and genus labelings (for details see \cite{KG} \cite{CGP1} and \cite{CGP2}).\footnote{Since there are finitely many isomorphism classes of objects in           $J_{g,n}$ we pick up one object in each equivalence class and pass to the full subcategory.} For a stable $n$-marked graph $\Gamma$ we will denote by $\mathrm{Aut}(\Gamma):=\mathrm {Isom}_{J_{g,n}}(\Gamma,\Gamma)$ a group of automorphisms of $\Gamma.$ For a stable $n$-marked graph         $\Gamma$ and $e\in E(\Gamma)$ we will denote by $\Gamma_e$ a graph that is obtained from $\Gamma$ by contracting an edge $e.$ The category $J_{g,n}$    has the terminal object denoted by $\bullet_{g,n}.$ By $J_{g,n}^k$ we will denote a set of stable $n$-marked graphs $\Gamma$ with $E(\Gamma)=k.$

\par\medskip

\begin{Def}\label{trop1} Fix $g, n \geq0$ with a condition $2g-2+n>0.$ We define a $J_{g,n}$-diagram $\mathcal {M}^{trop}_{g,n}$ called a \textit{moduli diagram of tropical curves of genus $g$ with $n$-punctures}:
$$
\mathcal M^{trop}_{g,n}\colon J_{g,n}^{\circ}\longrightarrow \textsf {Top}
$$
By the following rule:
\par\medskip
For every object $(\Gamma,w,m)\in J_{g,n}$ we set:
$$
\mathcal M^{trop}_{g,n}\colon (\Gamma,w,m)\longmapsto \mathbb R_{\geq 0}^{E(\Gamma)}:=\sigma_{\Gamma}.
$$
For a morphism $f\colon (\Gamma,w,m)\longrightarrow (\Gamma',w',m')$ we set:
$$
\mathcal M^{trop}_{g,n}\colon f\longmapsto \sigma f,
$$
where $\sigma f\colon \sigma_{\Gamma'}\longrightarrow \sigma_{\Gamma}$ is a map that sends an $e'$-coordinate of $\mathbb R_{\geq 0}^{E(\Gamma')}$ to an $e$-coordinate of $\mathbb R_{\geq 0}^{E(\Gamma)}$ if $f$ sends the edge $e$ to the edge $e'$ and zero otherwise.
\end{Def}

\begin{remark} Note that the colimit of the $J_{g,n}^{\circ}$-diagram above:
$$
M_{g,n}^{trop}:=\clim_{J_{g,n}^{\circ}}\mathcal M_{g,n}^{trop}.
$$
is usually called a moduli space of tropical curves of genus $g$ with $n$ punctures \cite{CGP2} \cite{CGP1}.
\end{remark}
Each space $\sigma_{\Gamma}$ carries a natural stratification by strata $\overline{S}_{\Gamma'}:=\mathbb R^{E(\Gamma)}_{\geq 0},$ where $\Gamma'$ is a graph that is defined by contracting some edges of a graph $\Gamma$  (such stratum is of dimension $|E(\Gamma)|.$) Moreover:
$$
S_{\Gamma'}\subset \overline{S}_{\Gamma''} \Leftrightarrow \Gamma'' \rightarrow \Gamma'.
$$
We will denote this stratification by $\mathcal S_{\Gamma}$ and the resulting stratification on $\mathcal M_{g,n}^{trop}$ will be denoted by $\mathcal S.$
\subsection{Sheaves on $\mathcal M_{g,n}^{trop}$}

In this subsection, we define and study combinatorial DG-sheaves $\EuScript {DM}_{g,n}$ on $J_{g,n}^{\circ}$-diagram $\mathcal                    M_{g,n}^{trop}.$ To do it let us recollect some well-known facts about moduli stacks of curves:
\par\medskip
Let $I$ be a finite set. We denote by $\overline{\mathcal M}_{g,I}$ the \textit{moduli stack of stable genus $g$ curves with $|I|$-marked points labeled by the set $I.$} When $2g+|I|-2>0$ this is the smooth and proper Deligne-Mumford stack with the corresponding coarse moduli space denoted by $\overline{M}_{g,I}$ \cite{DM} \cite{KN}. In the special case when $I=\{1,\dots ,n\}$ we use a notation $\overline{\mathcal M}_{g,n}.$ We shall write $\mathcal M_{g,I}$ (resp.  $M_{g,I}$) to indicate the open substack (resp. open coarse moduli space) parameterising smooth curves. We will also use a special notation $\overline{\mathcal M}_g$ for a moduli stack of curves marked by the empty set. 
\par\medskip
For a weighted graph $\Gamma\in J_{g,n}$ we can associate the following stack:
\begin{equation}\label{deco}
\overline{\mathcal M}_{\Gamma}:=\prod_{v\in V(\Gamma)} \overline{\mathcal M}_{w(v),n_v}
\end{equation}
By $\mathcal M_{\Gamma}$ we will denote a product of the corresponding open substacks.
Following \cite{KN} for a morphism $\Gamma\rightarrow \Gamma'$  in the category $J_{g,n}$ we associate \textit{clutching morphisms}:
\begin{equation}\label{cl}
\xi_{\Gamma'\Gamma}\colon \overline{\mathcal M}_{\Gamma}\longrightarrow \overline{\mathcal M}_{\Gamma'},
\end{equation}
which are defined by gluing stable curves at marked points.
An important property of these maps is that they satisfy associativity conditions:
\begin{equation}\label{ass}
\xi_{\Gamma''\Gamma'}\circ \xi_{\Gamma'\Gamma}=\xi_{\Gamma''\Gamma},\qquad \Gamma''\rightarrow \Gamma'\rightarrow \Gamma.
\end{equation}
Denote by $\partial\overline{\mathcal M}_{g,I}:=\overline{\mathcal M}_{g,I} \setminus \mathcal M_{g,I}$ the complement to the substack  parameterising smooth $I$-marked curves. It is known that $\partial\overline{\mathcal M}_{g,I}$ is a normal crossing divisor. We have the          following diagram of stacks:
\begin{equation}\label{Diag1}
\begin{diagram}[height=3em,width=3em]
\mathcal M_{g,I} &   \rTo^{j_I}   & \overline{\mathcal M}_{g,I} &   \lTo^{i_I}   & \partial\overline{\mathcal M}_{g,I}
\end{diagram}
\end{equation}
where a morphism $j$ is open and a morphism $i$ is closed.
\par\medskip
For any $\Gamma\in J_{g,n}$ denote by $\mathcal D_{\Gamma}$ a substack in the moduli stack $\overline{\mathcal M}_{g,I},$ defined as a closure of the locus of stable curves with the dual graph given by $\Gamma.$ The boundary divisor admits the following decomposition:
$$
\partial\overline{\mathcal M}_{g,I}=\coprod_{\Gamma\in J_{g,n}} \mathcal D_{\Gamma}
$$
Denote by $\widetilde{\mathcal D}_{\Gamma}\longrightarrow \mathcal D_{\Gamma}$ the corresponding normalised stack. Note that  $\widetilde{D}_{\Gamma}$ is the smooth stack. We have the following equivalence of stacks:
\begin{equation}\label{norm}
\overline{\mathcal M}_{\Gamma}/ \mathrm {Aut}(\Gamma)\cong \widetilde{\mathcal D}_{\Gamma},
\end{equation}
Let $\Gamma$ and $\Gamma'$ be objects of the category $J_{g,n}$ such that $\Gamma$ has exactly $k$ edges such that if one contracts one of them the resulting graph is isomorphic to $\Gamma'.$ Denote by $F$ a set of these edges. Then a group $\mathrm {Aut}(\Gamma)$ acts on $\overline{\mathcal M}_\Gamma\times F$ and we set $\widetilde{\mathcal D}_{\Gamma',\Gamma}:=\overline{\mathcal M}_\Gamma\times F / \mathrm {Aut}(\Gamma).$ We have the following correspondence:
\begin{equation}\label{norm2}
\begin{diagram}[height=2.5em,width=2.5em]
\widetilde{\mathcal D}_{\Gamma',\Gamma} &   \rTo^{\widetilde{\xi}_{\Gamma',\Gamma}}   &   \widetilde{\mathcal D}_{\Gamma'}  &  \\
\dTo^{\pi_{\Gamma',\Gamma}} & &  && \\
\widetilde{\mathcal D}_{\Gamma} &      &    \\
\end{diagram}
\end{equation}
Where $\pi_{\Gamma',\Gamma}$ is a standard projection map, and $\widetilde{\xi}_{\Gamma',\Gamma}$ is defined by the following rule: with an element     in $\overline{\mathcal M}_\Gamma$ and an edge $e\in F,$ the clutching along $e$ produces an element that is well defined up to an automorphism of $\Gamma'.$ This construction gives us a morphism $\overline{\mathcal M}_\Gamma\times F\longrightarrow \widetilde{\mathcal D}_{\Gamma'}$ which factors through               $\widetilde{\mathcal D}_{\Gamma',\Gamma}.$

\par\medskip
For a stratified space $\sigma_{\Gamma}$ associated with a stable weighted $n$-marked graph $\Gamma\in J_{g,n}$ we define an $\mathcal S_{\Gamma}$-smooth DG-combinatorial sheaf    $\EuScript {DM}_{\Gamma}\in \textsf {Sh}_c(\sigma_{\Gamma},\mathcal S)$ by the rule:
\begin{itemize}
  \item \par\medskip
For every graph $\Gamma'\in J_{g,n,\Gamma}^{\circ}$ we set:
$$
R\Gamma(S_{\Gamma'},\EuScript {DM}_{\Gamma}):=\bigotimes_{v\in V(\Gamma')} C^{\hdot}(\overline{\mathcal M}_{w(v),n_v},\mathbb Q)
$$
\par\medskip
  \item For every inclusion of strata $S_{\Gamma'} \subset \overline{S}_{\Gamma''}$ we define a variation map:
$$
var_{\Gamma', \Gamma''}\colon R\Gamma(S_{\Gamma'},\EuScript{DM}_{\Gamma})\longrightarrow R\Gamma(S_{\Gamma''},\EuScript{DM}_{\Gamma}),\quad var_{\Gamma', \Gamma''}:=\xi_{\Gamma',\Gamma''}^*
$$
as being induced by a pullback along the clutching morphism.
\end{itemize}
Since clutching maps satisfy the associativity condition we have the well-defined combinatorial DG-sheaf on $\sigma_{\Gamma}.$
\begin{Def} For every $g, n \geq0$ with a condition $2g-2+n>0$ we define an $\mathcal S$-smooth combinatorial DG-sheaf                                                     $\EuScript{DM}_{g,n}:=\{\EuScript{DM}_{\Gamma}\}_{\Gamma\in J_{g,n}}$ on the $J_{g,n}^{\circ}$-diagram $\mathcal M^{trop}_{g,n},$ called a \textit{Deligne-Mumford DG-sheaf} with connecting quasi-isomorphisms:
$$
\alpha(m)\colon m^*\EuScript{DM}_{\Gamma}\overset{\sim}{\longrightarrow}\EuScript{DM}_{\Gamma'},\quad m\colon \Gamma\longrightarrow \Gamma'
$$
Where $\alpha(m)$ is a natural quasi-isomorphism if $m$ is an edge contraction and $\alpha(m)$ is a permutation morphism if $m$ is an automorphism.     It is easy to see that all properties from Definition \ref{DGSh} are satisfied.
\end{Def}

\par\medskip

Recall that for any complex $C$ in an abelian category a standard truncation functors $\tau_{\leq k}$ define an increasing filtration:
$$
\dots \subset \tau_{\leq k}C\subset \tau_{\leq k+1}C\subset \dots\subset C.
$$
By applying these truncation functors to the definition of Deligne-Mumford DG-sheaves and using \eqref{comp5} we get a sequence of DG-combinatorial sheaves on $\mathcal M_{g,n}^{trop}:$
$$
\EuScript W_0\EuScript{DM}_{g,n}\rightarrow \EuScript W_1\EuScript{DM}_{g,n}\rightarrow\dots \rightarrow \EuScript{DM}_{g,n}
$$
\begin{Def} For every $k\geq0$ and $g,n$ such that $2g-2+n>0$ we define an $\mathcal S$-smooth DG-combinatorial sheaf $\mathrm {Gr}^{\EuScript W}_k\EuScript{DM}_{g,n}$ on the $J_{g,n}^{\circ}$-diagram $\mathcal M_{g,n}^{trop}$ called a \textit{Deligne-Mumford sheaf of the weight $k$} by the rule:
$$
\mathrm {Gr}^{\EuScript W}_k\EuScript{DM}_{g,n}:=\mathrm {Cone}(\EuScript W_{k-1}\EuScript{DM}_{g,n}\rightarrow \EuScript                             W_{k}\EuScript{DM}_{g,n})
$$

\end{Def}
Recall that for a complex $C$ in an abelian category we have a natural projection morphism $\tau_{\geq k}C/           \tau_{\geq k-1}C\longrightarrow H^k(C)[k]$ which induces the isomorphism on cohomology. Hence for every $k,$ we have the quasi-isomorphism of          combinatorial DG-sheaves:
\begin{equation}\label{form}
\mathrm {Gr}^{\EuScript W}_k\EuScript{DM}_{g,n}\overset{\sim}{\longrightarrow}H^k(\EuScript{DM}_{g,n})[k].
\end{equation}

\begin{remark}\label{modop1} Denote by $\textsf{ModOp}_{\mathcal D}$ a category of $\mathcal D$-twisted DG-modular cooperads in the sense E. Getzler and M. Kapranov \cite{KG}. Then using the description of combinatorial sheaves on $\mathcal M_{g,n}^{trop}$ one can show that equivalence \eqref{comp5} defines the functor:
$$
\textsf {Ho}(\textsf{ModOp}_{\mathcal D})\longrightarrow\textsf {D}_c(\mathcal M_{g,n}^{trop},\mathcal S)
$$
Under this functor, the combinatorial DG-sheaf $\EuScript{DM}_{g,n}$ corresponds to the (non-twisted) DG-modular cooperad               $C^{\hdot}(\overline{\mathcal M},\mathbb Q)$ from \cite{KG}.
\end{remark}

\subsection{Getzler-Kapranov complexes.} In this subsection, we compute the cohomology with compact support of combinatorial DG-sheaves
$\EuScript{DM}_{g,n}.$ To do it we give the following:
\begin{Def}\label{GKK} The \textit{Getzler-Kapranov complex} $\textsf {GK}_{g,n}$ is defined as the decorated graph complex which computes the compactly supported cohomology
with coefficients in the Deligne-Mumford sheaf:                                                                                                       $$
\textsf{GK}_{g,n}:=R\Gamma_c(\mathcal M^{trop}_{g,n},\EuScript{DM}_{g,n})
$$

\end{Def}

Following \cite{KG} we compute the cohomology of the Getzler Kapranov complex:

\begin{Prop}\label{KD} We have the following quasi-isomorphism of DG-vector spaces:
$$
C_c^{\hdot}(\mathcal M_{g,n},\mathbb Q)\overset{\sim}{\longrightarrow}\textsf{GK}_{g,n}.
$$
\end{Prop}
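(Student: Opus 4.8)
The plan is to exhibit $\textsf{GK}_{g,n}=R\Gamma_c(\mathcal M^{trop}_{g,n},\EuScript{DM}_{g,n})$ as the total complex of an explicit double complex and then compare it, term by term, with P.~Deligne's resolution of $C_c^{\hdot}(\mathcal M_{g,n},\mathbb Q)$ from Proposition \ref{D1}. First I would unwind the definition \eqref{compa} of global sections with compact support on the $J_{g,n}^{\circ}$-diagram: by construction $R\Gamma_c(\mathcal M^{trop}_{g,n},\EuScript{DM}_{g,n})=\hlim_{\Gamma\in J_{g,n}^{\circ}} R\Gamma_c(\sigma_{\Gamma},\EuScript{DM}_{\Gamma})$, and on each cone $\sigma_{\Gamma}=\mathbb R^{E(\Gamma)}_{\geq 0}$ Proposition \ref{GK} identifies $R\Gamma_c(\sigma_{\Gamma},\EuScript{DM}_{\Gamma})$ with the explicit complex $\bigoplus_{\dim S_{\Gamma'}=m} K_{\Gamma'}\otimes \textsf{or}_{\Gamma'}$, where $K_{\Gamma'}=\bigotimes_{v\in V(\Gamma')}C^{\hdot}(\overline{\mathcal M}_{w(v),n_v},\mathbb Q)$ and the strata of $\sigma_{\Gamma}$ are indexed by contractions $\Gamma'$ of $\Gamma$. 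Since $\mathbb R^{E}_{\geq 0}$ is contractible with its cone stratification, taking the homotopy limit over $J_{g,n}^{\circ}$ collapses (the homotopy colimit of the cones is $\mathcal M^{trop}_{g,n}$ and passing to the quotient by automorphisms is harmless over $\mathbb Q$), and one is left with a complex whose degree-$p$ term is $\bigoplus_{[\Gamma]\,:\,|E(\Gamma)|=p}\bigl(K_{\Gamma}\otimes \det(E(\Gamma))\bigr)^{\mathrm{Aut}(\Gamma)}$, the sum running over isomorphism classes of stable weighted $n$-marked genus-$g$ graphs with $p$ edges; the internal grading comes from the cohomological degree of $C^{\hdot}(\overline{\mathcal M},\mathbb Q)$.

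Next I would match this with Deligne's complex. By the decomposition $\partial\overline{\mathcal M}_{g,n}=\coprod_{\Gamma}\mathcal D_{\Gamma}$ and the normalisation equivalence \eqref{norm}, $\widetilde{\mathcal D}_p=\coprod_{|E(\Gamma)|=p}\widetilde{\mathcal D}_{\Gamma}=\coprod_{|E(\Gamma)|=p}\overline{\mathcal M}_{\Gamma}/\mathrm{Aut}(\Gamma)$, so that $C^{\hdot}(\widetilde{\mathcal D}_p,\varepsilon_p)\cong\bigoplus_{|E(\Gamma)|=p}\bigl(C^{\hdot}(\overline{\mathcal M}_{\Gamma},\mathbb Q)\otimes\varepsilon_{p}\bigr)^{\mathrm{Aut}(\Gamma)}$, and the Künneth formula identifies $C^{\hdot}(\overline{\mathcal M}_{\Gamma},\mathbb Q)\simeq\bigotimes_{v}C^{\hdot}(\overline{\mathcal M}_{w(v),n_v},\mathbb Q)=K_{\Gamma}$ while the determinant local system $\varepsilon_p$ of branches corresponds exactly to $\det(E(\Gamma))$. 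So the degree-$p$ terms agree. For the differentials, I would check that the combinatorial differential of $R\Gamma_c$ — which, stratum by stratum, is the variation map $var_{\Gamma',\Gamma''}=\xi^*_{\Gamma',\Gamma''}$ of $\EuScript{DM}_{g,n}$, i.e. pullback along a clutching morphism, summed over the edges one may un-contract with the appropriate sign from $\det(E)$ — is precisely the pull-push homomorphism $\pi_!\xi^*$ of Deligne's complex under the correspondence \eqref{norm2}; this is the content of the isomorphism $\pi^!\varepsilon_p\cong\xi^*\varepsilon_{p-1}$ recalled in the text. Once the two double complexes are identified, Proposition \ref{D1} gives the quasi-isomorphism of total complexes with $C_c^{\hdot}(\mathcal M_{g,n},\mathbb Q)$.

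The step I expect to be the main obstacle is the careful bookkeeping of signs, orientations and automorphisms: one must verify that the orientation line $\textsf{or}_{\Gamma'}=H_c^{|E(\Gamma')|}(S_{\Gamma'},\mathbb Q)$ appearing in Proposition \ref{GK} is canonically $\det(E(\Gamma'))$, that the $\mathrm{Aut}(\Gamma)$-action used to descend from $\overline{\mathcal M}_{\Gamma}$ to $\widetilde{\mathcal D}_{\Gamma}$ matches the one used to form the graph complex (over $\mathbb Q$, invariants equal coinvariants, which is used freely), and — most delicately — that the homotopy limit over the non-trivial category $J_{g,n}^{\circ}$ really does compute the expected complex rather than introducing higher derived contributions. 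The cleanest way to handle the last point is to invoke the comparison functor \eqref{comp3}–\eqref{comp4}: the constructible sheaf $Rv_*\EuScript{DM}_{g,n}$ on $M^{trop}_{g,n}$ has compactly supported cohomology computed by the cellular complex of the symmetric $\Delta$-complex $M^{trop}_{g,n}$ with coefficients in the decoration, which is exactly the graph complex above, and this is naturally quasi-isomorphic to Deligne's resolution. The remaining verifications are the "direct verification" already flagged in the proof of \eqref{comp5}, together with Deligne's theorem cited in Proposition \ref{D1}.
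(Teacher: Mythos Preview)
Your approach is essentially the paper's: compute $R\Gamma_c(\sigma_\Gamma,\EuScript{DM}_\Gamma)$ via Proposition \ref{GK}, pass to the homotopy limit over $J_{g,n}^{\circ}$ to obtain the decorated graph complex with terms $\bigoplus_{|E(\Gamma)|=p}\bigl(\bigotimes_v C^{\hdot}(\overline{\mathcal M}_{w(v),n_v},\mathbb Q)\otimes\det(E(\Gamma))\bigr)^{\mathrm{Aut}(\Gamma)}$, and then identify this term-by-term (and differential-by-differential, via \eqref{norm} and \eqref{norm2} and K\"unneth) with Deligne's resolution from Proposition \ref{D1}.

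The one place where your reasoning is loose is exactly the step you flag: the homotopy limit over $J_{g,n}^{\circ}$. Contractibility of the cones $\sigma_\Gamma$ is not the relevant input here; we are taking a homotopy limit of the complexes $R\Gamma_c(\sigma_\Gamma,\EuScript{DM}_\Gamma)$, and the question is whether higher derived limits appear. The paper's trick is to factor the terminal functor as $J_{g,n}^{\circ}\to\mathbf N\to\mathbf 1$, where the first functor sends a graph to its number of edges. The left Kan extension along $J_{g,n}^{\circ}\to\mathbf N$ is exact because the fibres have only finite automorphism groups and we are in characteristic zero (this is precisely your ``passing to the quotient by automorphisms is harmless over $\mathbb Q$''), and the resulting $\mathbf N$-indexed system of complexes is eventually constant, hence satisfies Mittag--Leffler, so $\lim_{\mathbf N}$ has no higher derived terms. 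Your alternative via $Rv_*$ and \eqref{comp4} would also work, but verifying that \eqref{comp4} is a quasi-isomorphism in this situation uses the same finiteness/characteristic-zero input, so it is not really a shortcut.
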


\begin{proof}

For any $\Gamma\in J_{g,n}$ by Proposition \ref{GK} a complex which computes the cohomology of $R\Gamma_c(\sigma_{\Gamma}, \EuScript {DM}_{\Gamma})$
can be represented as the total complex of the following DG-vector space:                                                                                 \begin{align}
C^{\hdot}(\overline{\mathcal M}_{g,n},\mathbb Q)\rightarrow \dots \rightarrow  \bigoplus_{\Gamma'\in J^k_{g,n}\,,\Gamma'\in J_{g,n,\Gamma}^{\circ}}\bigotimes_{v\in V(\Gamma')} C^{\hdot}(\overline{\mathcal M}_{w(v),n_v},\mathbb Q)\otimes\mathrm {det}(\Gamma')\rightarrow\dots
\end{align}
Here $\det(\Gamma'):=\det(E(\Gamma'))\cong \textsf {or}_{S_{\Gamma}}$ is the determinant on the set of edges of $\Gamma'$ and graphs with $k$ edges are placed in degree $k.$ A differential $D$ is defined by the rule:                                                                                                                         $$
D=\sum_{\Gamma\in J^k_{g,n},\,\Gamma'\in J_{g,n}^{k-1}\,\Gamma\twoheadrightarrow \Gamma'} \xi^*_{\Gamma',\Gamma}.
$$
To compute the homotopy limit over $J_{g,n}^{\circ}$ we do the following trick:
\par\medskip
The canonical functor to the terminal category $a_{J_{g,n}^{\circ}}\colon J^{\circ}_{g,n}\longrightarrow \mathbf 1$ can be decomposed
as $J_{g,n}^{\circ}\longrightarrow \mathbf N \longrightarrow \mathbf 1,$ where $\mathbf N$ is a category associated with the poset of natural
numbers and the first functor sends every graph $\Gamma$ with $k$ edges to the natural
number $k.$ Hence:
$$
\lim_{J^{\circ}_{g,n}}\cong \lim_{\mathbf N}\circ \mathrm {LKan}_P,
$$
where $\mathrm {LKan}_P$ is a left Kan extension functor along the functor $P.$ Thus since every graph $\Gamma$
has a finite automorphism group $\mathrm {Aut}(\Gamma)$ and we work over $\mathbb Q$ it is easy to see that the left Kan extension functor $\mathrm {LKan}_P$ along the functor $P$ is the exact functor.
Thus applying this construction to the DG-combinatorial                                                                                               sheaf $\EuScript {DM}_{g,n}$ we get the functor from $\mathbf N,$ with a value at the number $n$
is given by the total complex of the following DG-vector space:
\begin{align}
C^{\hdot}(\overline{\mathcal M}_{g,n},\mathbb Q)\rightarrow \dots \rightarrow  \bigoplus_{\Gamma\in J^k_{g,n}}\left(\bigotimes_{v\in V(\Gamma)}
C^{\hdot}(\overline{\mathcal M}_{w(v),n_v},\mathbb Q)\otimes\mathrm {det}(\Gamma)\right)^{\mathrm {Aut}(\Gamma)}\rightarrow\dots                             \end{align}
Where in the last non-zero entry, we place graphs $\Gamma$ with $E(\Gamma)=n.$ A differential is defined by the following rule:
\begin{equation}\label{diffe}
\widetilde{D}=\sum_{\Gamma\in J^k_{g,n}\,,\Gamma'\in J_{g,n}^{k-1}\,\Gamma\twoheadrightarrow \Gamma'}
\pi_{\Gamma',\Gamma!}\circ\widetilde{\xi}_{\Gamma',\Gamma}^*.                                                                                         \end{equation}
This functor satisfies a generalized Mittag-Leffler condition and hence the limit of this functor
over $\mathbf N$ is exact. Hence the homotopy limit $\hlim_{J_{g,n}^{\circ}}R\Gamma_c(\sigma_{\Gamma},\EuScript {DM}_{\Gamma})$ is exact  and therefore                                                                                                                                      $R\Gamma_c(\mathcal M_{g,n}^{trop},\EuScript {DM}_{g,n})$ is quasi-isomorphic to the total DG-vector space of the following
complex:
\begin{align}
C^{\hdot}(\overline{\mathcal M}_{g,n},\mathbb Q)\rightarrow \dots \rightarrow  \bigoplus_{\Gamma\in J^k_{g,n}}\left(\bigotimes_{v\in V(\Gamma)}
C^{\hdot}(\overline{\mathcal M}_{w(v),n_v},\mathbb Q)\otimes\mathrm {det}(\Gamma)\right)^{\mathrm {Aut}(\Gamma)}\rightarrow\dots                             \end{align}
By a Künneth formula, the functor of the compactly supported cochains is monoidal and also it commutes with homotopy colimits. Hence we get the following quasi-isomorphism:                                                                                                                                    \begin{align}
  \bigoplus_{\Gamma\in J_{g,n}^k}\left(\bigotimes_{v\in V(\Gamma)} C^{\hdot}(\overline{\mathcal M}_{w(v),n_v},\mathbb Q)\otimes \mathrm
  {det}(\Gamma)\right )^{\mathrm {Aut}(\Gamma)}\cong\\                                                                                                  C^{\hdot}\left(\coprod_{\Gamma\in J_{g,n}^k} \prod_{v\in V(\Gamma)}\overline{\mathcal M}_{w(v),n_v}/ \mathrm {Aut}(\Gamma),\varepsilon_k\right).
\end{align}
Where $\varepsilon$ is a local system defined by a sign representation of a group that permutes the edges of a stable graph. By \eqref{norm} the latter stack is equivalent to the stack $\widetilde{\mathcal D}_{k},$ which is a normalisation of the stack of nodal curves with at least $k$ nodes. A differential is given by the pull-push along diagram \eqref{norm2}. Therefore by Proposition \ref{D1} we have the      desired result.

\end{proof}

\begin{remark}\label{FT} Note that using a notion of a constructible DG-cosheaf on the $J_{g,n}^{\circ}$-diagram $\mathcal M_{g,n}^{trop}$ similarly to Remark
\ref{modop1} one can encode a notion of a $\mathcal D$-twisted modular operad:                                                                        
\begin{equation}\label{modreal1}
\textsf{ModOp}_{\mathcal D}\longrightarrow \textsf {D}_{c}^{ch}(\mathcal M_{g,n}^{trop},\mathcal S).
\end{equation} 
Moreover one can show that the following diagram is commutative:
\begin{equation}
\begin{diagram}[height=2.5em,width=5em]
 \textsf {Ho}(\textsf{ModCoop}_{\mathcal D}) &   \rTo^{\mathcal F_{\mathcal D}}   &   \textsf {Ho}(\textsf{ModOp}_{\mathcal D^{\vee}})  &  \\
\dTo_{}^{} & &  \dTo_ {}^{}  && \\
\textsf {D}_{c}(\mathcal M_{g,n}^{trop},\mathcal S) &   \rTo_{}^{\mathbb V}   & \textsf {D}_{c}^{ch}(\mathcal M_{g,n}^{trop},\mathcal S)
\end{diagram}
\end{equation}
Where $\mathcal F_{\mathcal D}$ is a covariant $\mathcal D$-Feynman transform \cite{KG} and $\mathbb V$ is a covariant
Verdier duality functor \cite{LG}. Under functor \eqref{modreal1} a DG-cosheaf $\mathbb V(\EuScript            {DM}_{g,n})$ corresponds to the twisted modular operad $\mathcal{G}rav.$ By Poincaré-Verdier duality we have:\footnote{By $H^{\hdot}_+$ we understand a (cosheaf) pushforward functor \cite{LG} along the morphism to a point.}                                                                $$                                                                                                                                                    H^{\hdot}_+(\mathcal M_{g,n}^{trop},\mathbb V(\EuScript {DM}_{g,n}))\cong H^{\hdot}_c(\mathcal M_{g,n}^{trop},\EuScript {DM}_{g,n})
$$
All these facts together with a formality of the DG-modular cooperad $C^{\hdot}(\overline{\mathcal M},\mathbb Q)$ \cite{form} imply an equivalence between Definition \ref{GKK} and the definition from \cite{AWZ}.
\end{remark}

\subsection{The weight filtration on $H_c^{\hdot}(\mathcal M_{g,n},\mathbb Q)$} Note that since $\mathcal M_{g,n}$ is the smooth Deligne-Mumford stack for every
$k$ the corresponding cohomology with compact support $H_c^k(\mathcal M_{g,n},\mathbb Q)$ has weights in a region $\{0,\dots ,k\}$ i.e. \textit{a priori} only the corresponding graded quotients are non zero. From Proposition \ref{D2} we obtain a description of the weight graded quotients of $H_c^{\hdot}(\mathcal M_{g,n},\mathbb Q)$ in terms of  decorated graph complexes.
\begin{Def} For $2g-2+n>0$ we denote by $\textsf W_k\textsf{GK}_{g,n}$ a decorated graph complex which computes the cohomology with compact support of the $J_{g,n}^{\circ}$-diagram $\mathcal M_{g,n}^{trop}$ with coefficients in the DG-combinatorial sheaf $\mathrm {Gr}^{\EuScript
W}_k\EuScript{DM}_{g,n}:$                                                                                                                             $$                                                                                                                                                    R\Gamma_c(\mathcal M^{trop}_{g,n},\mathrm {Gr}^{\EuScript W}_k\EuScript{DM}_{g,n}):=\textsf W_k\textsf{GK}_{g,n}
$$
This DG-vector space will be called the \textit{Getzler-Kapranov complex of the weight $k$.}
\end{Def}

Our main result in this section is:

\begin{Th}\label{wgk} We have the following description of the weight filtration on the compactly supported cohomology of $\mathcal M_{g,n}:$

$$
\mathrm {Gr}^W_k H^{i+k}_c(\mathcal M_{g,n})\overset{\sim}{\longrightarrow}  H^i(\textsf W_k\textsf{GK}_{g,n})
$$

\end{Th}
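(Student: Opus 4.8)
The plan is to run the computation from the proof of Proposition~\ref{KD} once more, this time for the weight-$k$ graded sheaf $\mathrm{Gr}^{\EuScript W}_k\EuScript{DM}_{g,n}$ in place of $\EuScript{DM}_{g,n}$, and then to match the output against P.~Deligne's complex $C_k^{\hdot}$ from Proposition~\ref{D2}.

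First, using the quasi-isomorphism \eqref{form} I would replace $\mathrm{Gr}^{\EuScript W}_k\EuScript{DM}_{g,n}$ by $H^k(\EuScript{DM}_{g,n})$ (up to the indicated shift). The latter is a genuine $\mathcal S$-smooth combinatorial sheaf on the $J_{g,n}^{\circ}$-diagram $\mathcal M_{g,n}^{trop}$: by the Künneth formula its value on a stratum $S_{\Gamma'}$ is $H^k(\overline{\mathcal M}_{\Gamma'},\mathbb Q)$, its variation maps are those induced on $H^k$ by the clutching pullbacks, and its connecting maps along $J_{g,n}^{\circ}$ are induced by those of $\EuScript{DM}_{g,n}$; verifying that this is indeed the cohomology sheaf $H^k(\EuScript{DM}_{g,n})$ with these structure maps is routine from the definitions.

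Next I would compute $R\Gamma_c(\mathcal M_{g,n}^{trop},H^k(\EuScript{DM}_{g,n}))$ by repeating, essentially verbatim, the argument of Proposition~\ref{KD}: apply Proposition~\ref{GK} on each cone $\sigma_{\Gamma}$; factor the projection $a_{J_{g,n}^{\circ}}$ through the poset $\mathbf N$ and use that the left Kan extension $\mathrm{LKan}_P$ is exact (finite automorphism groups, characteristic $0$); and use the generalised Mittag-Leffler condition to conclude that $\lim_{\mathbf N}$ is exact. Since that argument used nothing about $\EuScript{DM}_{g,n}$ beyond the combinatorial-sheaf axioms, it goes through with $H^k(\overline{\mathcal M}_{\Gamma},\mathbb Q)$ in place of $C^{\hdot}(\overline{\mathcal M}_{\Gamma},\mathbb Q)$, and gives a quasi-isomorphism between $R\Gamma_c(\mathcal M_{g,n}^{trop},H^k(\EuScript{DM}_{g,n}))$ and
$$
H^k(\overline{\mathcal M}_{g,n},\mathbb Q)\to\cdots\to\bigoplus_{\Gamma\in J^p_{g,n}}\Bigl(H^k(\overline{\mathcal M}_{\Gamma},\mathbb Q)\otimes\det(\Gamma)\Bigr)^{\mathrm{Aut}(\Gamma)}\to\cdots,
$$
with $p$-edge graphs placed in degree $p$ and differential $\sum\pi_{\Gamma',\Gamma!}\circ\widetilde\xi^{*}_{\Gamma',\Gamma}$. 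By the equivalence \eqref{norm}, applied graphwise and summed over $J^p_{g,n}$, the degree-$p$ term is $H^k(\widetilde{\mathcal D}_p,\varepsilon_p)$ with $\varepsilon_p$ the determinant local system of the local branches of the boundary divisor (equivalently, the sign representation of the edge-permutation group), and the differential is carried to Deligne's pull-push $\pi_!\xi^*$ along the correspondence \eqref{norm2}, exactly as at the end of the proof of Proposition~\ref{KD}, now on $H^k$ rather than on $C^{\hdot}$. Thus this complex is $C_k^{\hdot}$, so $\textsf W_k\textsf{GK}_{g,n}$ is $C_k^{\hdot}$ up to the shift recorded in \eqref{form}; taking cohomology and applying Proposition~\ref{D2}, whose isomorphism $H^i(C_k^{\hdot})\cong\mathrm{Gr}^W_kH^{i+k}_c(\mathcal M_{g,n},\mathbb Q)$ already absorbs that shift into the $+k$ in the target degree, yields the theorem.

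The main obstacle, as always in arguments of this shape, is the bookkeeping of shifts and of the $\mathrm{Aut}(\Gamma)$-actions: one must check that $H^k(\EuScript{DM}_{g,n})$ carries exactly the combinatorial-sheaf structure described above (so that its compactly supported cohomology is genuinely Deligne's complex $C_k^{\hdot}$, not merely a complex with the same associated graded), that the sign systems $\varepsilon_p$ and the orientation twists $\det(\Gamma)$ are compatible under \eqref{norm}, and that the degree shift coming from \eqref{form} is precisely the one recorded in Proposition~\ref{D2}. Once these compatibilities are settled, the proof is a line-by-line transcription of the proof of Proposition~\ref{KD} with every occurrence of $C^{\hdot}(\overline{\mathcal M}_{\Gamma},\mathbb Q)$ replaced by $H^k(\overline{\mathcal M}_{\Gamma},\mathbb Q)$.
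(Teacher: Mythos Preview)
Your proposal is correct and follows essentially the same approach as the paper's own proof: apply \eqref{form} to pass to $H^k(\EuScript{DM}_{g,n})[k]$, rerun the computation of Proposition~\ref{KD} with $H^k(\overline{\mathcal M}_{\Gamma},\mathbb Q)$ in place of $C^{\hdot}(\overline{\mathcal M}_{\Gamma},\mathbb Q)$ to obtain Deligne's complex $C_k^{\hdot}$, and finish with Proposition~\ref{D2}. The paper's proof is terser and simply refers to ``acting like in the proof of Proposition~\ref{KD}'', whereas you spell out more of the bookkeeping (exactness of $\mathrm{LKan}_P$, Mittag--Leffler, compatibility of $\det(\Gamma)$ with $\varepsilon_p$, and the shift), but the logical structure is identical.
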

\begin{proof} Let us compute the complex:
\begin{equation}\label{gk1}
R\Gamma_c(\mathcal M^{trop}_{g,n},\mathrm {Gr}_k^{\EuScript W}\EuScript{DM}_{g,n})
\end{equation}
Applying the quasi-isomorphism $\mathrm {Gr}^{\EuScript W}_k\EuScript{DM}_{g,n}\overset{\sim}{\longrightarrow}H^k(\EuScript{DM}_{g,n})[k]$ of
combinatorial DG-sheaves on $\mathcal M_{g,n}^{trop}$ and acting like in the proof of Proposition \ref{KD} we obtain that \eqref{gk1} is quasi-isomorphic to the total DG-vector space of the complex:                                                                  \begin{equation}                                                                                                                                      \begin{diagram}[height=3em,width=3em]
H^k(\overline{\mathcal M}_{g,n},\mathbb Q)&   \rTo^{\widetilde{D}}   & H^k(\widetilde{\mathcal D}_1,\varepsilon_1)&   \rTo^{\widetilde{D}}   & \dots &   \rTo^{\widetilde{D}}   &
H^k(\widetilde{\mathcal D}_p,\varepsilon_p)\rightarrow\dots,
\end{diagram}                                                                                                                                         \end{equation}
where $\widetilde{D}$ is defined by \eqref{diffe}. By Proposition \ref{D2} the $i$-cohomology of the complex above coincides with $\mathrm {Gr}^W_k
H^{i+k}_c(\mathcal M_{g,n},\mathbb Q).$

\end{proof}

\subsection{Low weights} Denote by $\textsf {H}_{n}\textsf {GC}^{\hdot}(\delta)$, the \textit{$n$-labelled hairy graph complex} in
the sense \cite{AWZ} and \cite{CGP2}. Elements of this complex are stable weighted $n$-marked graphs without loops and with at least
trivalent vertices. The differential $\delta$ is defined as the signed sum of vertex splitting operators. In the special case when $n$
is zero this complex coincides with $\textsf {GC}^{\hdot}(\delta)$ the \textit{M. Kontsevich graph complex} \cite{Will}. A cohomological grading on     $\textsf {H}_n\textsf {GC}^{\hdot}(\delta)$ is defined by a number of edges. Since the differential in $\textsf {H}_n\textsf {GC}^{\hdot}(\delta)$ does not change the genus of a graph, complex $\textsf {H}_n\textsf {GC}^{\hdot}(\delta)$ can be decomposed as:
$$
\textsf {H}_n\textsf {GC}^{\hdot}(\delta):=\prod_{g=0}^{\infty} \textsf{B}_g\textsf {H}_n\textsf {GC}^{\hdot}(\delta),
$$
where $\textsf{B}_g\textsf {H}_n\textsf {GC}_0^{\hdot}(\delta),$ is the subcomplex that consists of stable $n$-marked graphs of genus $g.$

\begin{Cor}\label{CGP11} We have the following quasi-isomorphism of complexes:
$$
\textsf{B}_g\textsf {H}_n\textsf {GC}^{\hdot}(\delta)\cong \textsf W_0\textsf{GK}_{g,n}^{\hdot}
$$
\end{Cor}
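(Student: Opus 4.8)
The plan is to compute $\textsf{W}_0\textsf{GK}_{g,n}^{\hdot} = R\Gamma_c(\mathcal M^{trop}_{g,n}, \mathrm{Gr}^{\EuScript W}_0\EuScript{DM}_{g,n})$ explicitly using the machinery already assembled, and to identify the resulting decorated graph complex with the hairy graph complex $\textsf B_g\textsf H_n\textsf{GC}^{\hdot}(\delta)$. The first step is to apply the quasi-isomorphism \eqref{form} in the case $k=0$, which gives $\mathrm{Gr}^{\EuScript W}_0\EuScript{DM}_{g,n} \overset{\sim}{\longrightarrow} H^0(\EuScript{DM}_{g,n})$. The key input here is that $H^0(\overline{\mathcal M}_{g',n'},\mathbb Q) \cong \mathbb Q$ for every $g', n'$ in the stable range (each $\overline{\mathcal M}_{g',n'}$ is connected), so the weight-zero Deligne-Mumford sheaf is, over each stratum $S_{\Gamma'}$, simply the one-dimensional space $\bigotimes_{v\in V(\Gamma')} H^0(\overline{\mathcal M}_{w(v),n_v},\mathbb Q) \cong \mathbb Q$, with all variation maps (pullbacks along clutching morphisms) the identity. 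In sheaf-theoretic terms $\mathrm{Gr}^{\EuScript W}_0\EuScript{DM}_{g,n}$ is the constant sheaf $\underline{\mathbb Q}_{\mathcal M^{trop}_{g,n}}$ on the $J_{g,n}^{\circ}$-diagram.

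Next I would run exactly the computation in the proof of Proposition \ref{KD}, now with this constant sheaf as coefficients. Following that argument verbatim — decomposing the limit over $J_{g,n}^{\circ}$ through $\mathbf N$, using exactness of $\mathrm{LKan}_P$ and the Mittag-Leffler condition — one obtains that $R\Gamma_c(\mathcal M^{trop}_{g,n},\underline{\mathbb Q})$ is quasi-isomorphic to the total complex of
\begin{equation*}
\mathbb Q \rightarrow \dots \rightarrow \bigoplus_{\Gamma\in J^k_{g,n}}\bigl(\mathrm{det}(\Gamma)\bigr)^{\mathrm{Aut}(\Gamma)}\rightarrow\dots,
\end{equation*}
i.e. the complex with one generator for each isomorphism class of stable weighted $n$-marked graph $\Gamma$ of genus $g$, carrying the orientation $\det(E(\Gamma))$, placed in cohomological degree $|E(\Gamma)|$, and with differential $\widetilde D$ now reduced to a signed sum over edge-uncontractions, i.e. the transpose of the edge-contraction maps. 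This is precisely a graph complex on stable weighted marked graphs. The differential $\widetilde D$ dual to edge contraction is, by the standard duality, the vertex-splitting differential $\delta$ (splitting a vertex $v$ of genus $w(v)$ into two vertices joined by an edge, distributing the genus and the incident half-edges/markings in all stable ways).

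The main obstacle — and the only substantive point — is the passage from this complex of \emph{all} stable weighted marked graphs to the smaller complex $\textsf B_g\textsf H_n\textsf{GC}^{\hdot}(\delta)$ of graphs that are \emph{loop-free}, have \emph{all vertex-weights zero}, and are \emph{at least trivalent}. Here I would invoke the acyclicity arguments of Chan-Galatius-Payne \cite{CGP1,CGP2}: the subcomplex spanned by graphs with a vertex of positive weight, or with a loop, is acyclic (one builds an explicit contracting homotopy, essentially because $\overline{\mathcal M}_{1,1}$ and $\overline{\mathcal M}_{0,3}$-type corners contribute trivially at weight zero), and similarly graphs with a bivalent or univalent vertex (a vertex $v$ with $w(v)=0$, $n_v \le 2$) can be eliminated — a bivalent vertex with no marking is a subdivision point, and the span of graphs having such is acyclic, while $w(v)=0,\, n_v<3$ violates stability unless there is a marking, which the hairy conventions handle via the skew-symmetrization. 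After quotienting by these acyclic subcomplexes one is left exactly with loop-free, trivalent, weight-zero $n$-marked graphs with the $\delta$-differential, i.e. $\textsf B_g\textsf H_n\textsf{GC}^{\hdot}(\delta)$, which proves the corollary. I would also note that this recovers the Chan-Galatius-Payne isomorphism $W_0 H_c^i(\mathcal M_{g,n},\mathbb Q)\cong H^i(\textsf B_g\textsf H_n\textsf{GC})$ upon combining with Theorem \ref{wgk} at $k=0$.
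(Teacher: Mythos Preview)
Your argument is correct and follows the same overall strategy as the paper: identify $\mathrm{Gr}^{\EuScript W}_0\EuScript{DM}_{g,n}$ with the constant sheaf $\underline{\mathbb Q}_{\mathcal M_{g,n}^{trop}}$, compute $R\Gamma_c$ with these trivial coefficients, and then invoke the Chan--Galatius--Payne acyclicity result to pass from the full stable-graph complex to $\textsf B_g\textsf H_n\textsf{GC}^{\hdot}(\delta)$. The only difference is in the middle step. The paper does not rerun the machinery of Proposition~\ref{KD}; instead it observes that for the constant sheaf the comparison morphism \eqref{comp4} is a quasi-isomorphism, so $R\Gamma_c$ of the diagram agrees with $R\Gamma_c(M_{g,n}^{trop},\mathbb Q)$ of the honest colimit space, and then passes to the link $\Delta_{g,n}$ and quotes the contractibility of the subcomplex $\Delta_{g,n}^{lw}$ from \cite{CGP2}. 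Your route stays entirely inside the graph-complex formalism and never mentions $M_{g,n}^{trop}$ or $\Delta_{g,n}$ as topological spaces; this is a bit more self-contained relative to the paper's own setup, while the paper's route makes the connection to the tropical moduli space and the CGP results more transparent. One small cleanup: your digression about bivalent and univalent vertices is unnecessary, since stability already forces $n_v\geq 3$ whenever $w(v)=0$; once loops and positive-weight vertices are removed by the CGP argument, the remaining graphs are automatically at least trivalent.
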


\begin{proof}  Consider the weight zero combinatorial DG-sheaf $\EuScript W_0\EuScript {DM}_{g,n}.$ From the irreducibility of the moduli stack of
curves of genus $g$ with $n$-punctures \cite{DM} we have an isomorphism of sheaves:
$$                                                                                                                                                    \EuScript W_0\EuScript {DM}_{g,n} \cong  \underline{\mathbb Q}_{\mathcal M_{g,n}^{trop}}
$$
Note that in our case \eqref{comp4} is the quasi-isomorphism (an automorphism group of a stable graph acts freely). Hence we have: 
$$
R\Gamma_c(\mathcal M_{g,n}^{trop},\underline{\mathbb Q}_{\mathcal M_{g,n}^{trop}}):=\hlim_{J^{\circ}_{g,n}}
R\Gamma_c(\sigma_{\Gamma},\underline{\mathbb Q}_{\Gamma})\cong R\Gamma_c(M_{g,n}^{trop},\mathbb Q).
$$                                                                                                                                                    Recall (for details see \cite{CGP1} and \cite{CGP2}) that a moduli space of volume $1$ tropical curves of genus $g$ with $n$ punctures $\Delta_{g,n}$ is a certain symmetric $\Delta$-complex with the geometric realisation $|\Delta_{g,n}|$
identified with the complement to a link in $M_{g,n}^{trop},$ hence:
$$
H^k_c(M_{g,n}^{trop},\mathbb Q)\cong \widetilde{H}^{k-1}(|\Delta_{g,n}|,\mathbb Q).
$$
Further using \textit{ibid.} one may decompose:
$$
C^{\hdot}(|\Delta_{g,n}|,\mathbb Q)=C^{\hdot}(|\Delta_{g,n}^{lw}|,\mathbb Q)\oplus C^{\hdot}(|\Delta_{g,n}|,|\Delta_{g,n}^{lw}|,\mathbb Q).
$$
Where $\Delta_{g,n}^{lw}$ is a sub symmetric $\Delta$-complex which consists of graphs with loops and vertices of positive weights.
Applying Proposition \cite{CGP2} it can be shown that $\Delta_{g,n}^{lw}$ is contractible therefore by the definition of the hairy $n$-marked complex we
get:
$$                                                                                                                                                    C^{\hdot}(|\Delta_{g,n}|,|\Delta_{g,n}^{lw}|,\mathbb Q):=\textsf B_g\textsf {H}_n\textsf {GC}^{\hdot}(\delta).
$$
Hence we get the desired quasi-isomorphism.

\end{proof}

\begin{remark}  For $g,n$ with $2g-2+n>0$ consider the canonical morphism of combinatorial DG-sheaves
\begin{equation}\label{awz}
\EuScript W_0\EuScript {DM}_{g,n}\longrightarrow \EuScript {DM}_{g,n}
\end{equation}
Applying the functor of the compactly supported cohomology we get the morphism:
\begin{equation}\label{GCP1}
\textsf{cgp}\colon H^{\hdot}(\textsf B_g\textsf {H}_n\textsf {GC})\longrightarrow H^{\hdot}_c(\mathcal M_{g,n},\mathbb Q)
\end{equation}
From the result above we see that this morphism is injective and tautologically coincides with a morphism constructed in \cite{CGP1} and \cite{CGP2}. We can also describe \eqref{GCP1}
from a little bit different perspective. The correspondence from Remark \ref{FT} and the formality result \cite{form} applied to \eqref{awz} produce the morphism of graded modular
cooperads \cite{AWZ}:
$$
\mathcal {C}omm\longrightarrow C^{\hdot}(\overline{\EuScript M},\mathbb Q),
$$
where $\mathcal {C}omm$ is a modular envelope of the cyclic cocommutative
cooperad. Note that the compactly supported cochains with coefficients in $\EuScript W_0\EuScript {DM}_{g,n}$ and $\EuScript {DM}_{g,n}$ can be
identified with the DG-vector spaces over $\bullet_{g,n},$ of the Feynman
transforms of the corresponding modular cooperads. Hence the morphism $\textsf {cgp}$ coincides with the morphism from \cite{AWZ}.
\end{remark}

 Recall that in \cite{AC} (Theorem $2.1$) it was proved that $H^k(\overline{\mathcal M}_{g,n},\mathbb Q)=0$ for $k=1,3,5$ and all $g,n$ such that $2g-2+n>0.$ Thus by quasi-isomorphism \eqref{form} and by the Künneth formula, we have that the cohomology of the Deligne-Mumford sheaves $\mathrm {Gr}^{\EuScript W}_k\EuScript {DM}_{g,n}$  vanishes when $k=1,3,5.$ We have the following:

\begin{Cor} We have the vanishing of the $k$-associated weight quotients of $H^{i+k}_c(\mathcal M_{g,n},\mathbb Q)$ in the case when $k=1,3,5:$
$$
\mathrm {Gr}_k^WH^{i+k}_c(\mathcal M_{g,n},\mathbb Q)=0,\qquad k=1,3,5.
$$
\end{Cor}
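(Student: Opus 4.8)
The plan is to bootstrap from the low-degree vanishing of $H^{\hdot}(\overline{\mathcal M}_{g,n},\mathbb Q)$ proved in \cite{AC} using the two structural results already at our disposal: Theorem \ref{wgk} and the quasi-isomorphism \eqref{form}. First I would observe that, by Theorem \ref{wgk}, it suffices to prove that the weight-$k$ Getzler-Kapranov complex $\textsf W_k\textsf{GK}_{g,n}=R\Gamma_c(\mathcal M^{trop}_{g,n},\mathrm{Gr}^{\EuScript W}_k\EuScript{DM}_{g,n})$ is acyclic for $k=1,3,5$. By \eqref{form} we have $\mathrm{Gr}^{\EuScript W}_k\EuScript{DM}_{g,n}\overset{\sim}{\longrightarrow}H^k(\EuScript{DM}_{g,n})[k]$, so — since $R\Gamma_c$ carries the zero object to an acyclic complex — the whole statement reduces to showing that the cohomology sheaf $H^k(\EuScript{DM}_{g,n})$ vanishes (as an object of $\textsf{Sh}_c(\mathcal M^{trop}_{g,n},\mathcal S)$) for $k=1,3,5$.

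Next I would compute $H^k(\EuScript{DM}_{g,n})$ stratum by stratum. Over the stratum $S_{\Gamma'}$ the Deligne-Mumford sheaf has sections $\bigotimes_{v\in V(\Gamma')}C^{\hdot}(\overline{\mathcal M}_{w(v),n_v},\mathbb Q)$, so by the Künneth formula its $k$-th cohomology is $\bigoplus\bigotimes_{v\in V(\Gamma')}H^{j_v}(\overline{\mathcal M}_{w(v),n_v},\mathbb Q)$, the sum being taken over all ways of writing $k=\sum_{v\in V(\Gamma')}j_v$ with $j_v\geq 0$. Since $\Gamma'$ is stable, every vertex satisfies $2w(v)-2+n_v>0$, so Theorem $2.1$ of \cite{AC} applies at each vertex and gives $H^{j}(\overline{\mathcal M}_{w(v),n_v},\mathbb Q)=0$ for $j\in\{1,3,5\}$.

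The combinatorial point is then elementary: if $k\in\{1,3,5\}$, then in any decomposition $k=\sum_v j_v$ at least one $j_v$ is odd (a sum of even numbers is even), and since $0<j_v\leq k\leq 5$ this forces $j_v\in\{1,3,5\}$. Hence every Künneth summand contains a tensor factor $H^{j_v}(\overline{\mathcal M}_{w(v),n_v},\mathbb Q)=0$, so $H^k(\EuScript{DM}_{g,n})$ has vanishing sections over every stratum, i.e. it is the zero sheaf. Therefore $\mathrm{Gr}^{\EuScript W}_k\EuScript{DM}_{g,n}\simeq 0$, the complex $\textsf W_k\textsf{GK}_{g,n}$ is acyclic, and Theorem \ref{wgk} yields $\mathrm{Gr}^W_kH^{i+k}_c(\mathcal M_{g,n},\mathbb Q)\cong H^i(\textsf W_k\textsf{GK}_{g,n})=0$ for $k=1,3,5$. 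I do not expect a genuine obstacle here: the only substantive input is the cited vanishing theorem of Arbarello-Cornalba, and the sole point requiring care is the bookkeeping that stability is inherited by each vertex (so that \cite{AC} applies with the right numerical hypothesis) together with the parity observation above.
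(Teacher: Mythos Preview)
Your proof is correct and follows exactly the approach taken in the paper: the paper records, in the sentence immediately preceding the Corollary, that Arbarello--Cornalba's vanishing together with \eqref{form} and the K\"unneth formula force $\mathrm{Gr}^{\EuScript W}_k\EuScript{DM}_{g,n}$ to vanish for $k=1,3,5$, and then invokes Theorem~\ref{wgk}. You have simply unpacked the K\"unneth step and made the parity argument explicit, which the paper leaves to the reader.
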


\begin{remark} $(i)$ The morphism $\textsf{cgp}$ allows producing previously unknown classes in moduli stacks of curves. In particular when $n=0$ by Theorems of \cite{Will} and \cite{Brown} one gets an injection:
$$
\widehat{Free}_{Lie}(\sigma_3,\dots,\sigma_{2k+1},\dots)\hookrightarrow \prod_{g=3}^{\infty} H_c^{2g}(\mathcal M_g,\mathbb Q).
$$
Where $\widehat{Free}_{Lie}(\sigma_3,\dots,\sigma_{2k+1},\dots)$ is a (completed) free Lie algebra on odd generators, see a discussion in \cite{CGP2}.

$(ii)$ In the recent interesting work \cite{PW1} the cohomology of the weight two Getzler-Kapranov complex was computed. It would be very interesting to calculate the cohomology of $\textsf {W}_4\textsf {GK}_{g,n}^{\hdot}.$

\end{remark}

\section{Sheaves on moduli spaces of hairy tropical curves}

\subsection{Moduli spaces of hairy tropical curves}

For every $g\geq 2$ we consider a category $HJ_{g}$ with objects defined by stable weighted $n$-marked graphs $\Gamma$ of genus $g,$ where the marking function:
$$m\colon \{1,\dots, n\}\longrightarrow V(\Gamma)$$
is considered to be defined for an \textit{arbitrary} value of $n$ i.e. $n\in \mathbb N.$ Morphisms in this category are defined as compositions of contractions of edges, morphisms that forget markings, and isomorphisms of graphs that may not preserve labelings of markings. Similar to the case of the category $J_{g,n}$ it is convenient to represent an object in the category $HJ_{g}$ as a stable graph of genus $g$ with any finite number of unlabelled hairs attached to vertices (such that the stability condition holds for each vertex). For $\Gamma\in HJ_g$ we will denote by $\mathrm {Aut}^{h}(\Gamma):=\mathrm {Isom}_{HJ_{g}}(\Gamma,\Gamma)$ a group of automorphisms of a graph $\Gamma$ in the category $HJ_g.$ It is important to distinguish it from the group of automorphisms of $\Gamma$ considered as an object of $J_{g,n}$ (for example $\mathrm {Aut}^{h}(\bullet_{g,n})\cong \Sigma_n,$ while $\mathrm {Aut}(\bullet_{g,n})$ is trivial). For a graph $\Gamma\in HJ_{g}$ and $h\in m^{-1}(V(\Gamma))$ we denote by $\Gamma_h$ an element in $HJ_{g}$ with a hair $h$ being contracted.\footnote{We consider this operation when the resulting graph $\Gamma_h$ is stable.} By $HJ_g^{n,k}$ we denote a collection of graphs in $HJ_g$ with exactly $n$-edges and $k$-hairs. Analogous to the definition of the $J_{g,n}^{\circ}$-diagram $\mathcal M_{g,n}^{trop}$ we give:

\begin{Def}\label{hartrop} For every $g\geq 2$ we define a $HJ^{\circ}_g$-diagram $\mathcal  {HM}_{g}^{trop}$ called a \textit{moduli diagram of tropical curves of genus $g$ with an arbitrary number of hairs}:
$$
\mathcal {HM}_{g}^{trop}\colon HJ_{g}^{\circ} \longrightarrow \textsf {Top}
$$
by the following rule:
\par\medskip
For every object $(\Gamma,w,m)\in HJ_{g}$ we set:
$$
\mathcal {HM}_{g}^{trop}\colon (\Gamma,w,m)\longmapsto \sigma_{\Gamma}^{hair}:=\mathbb R_{\geq 0}^{E(\Gamma)\sqcup m^{-1}(V(\Gamma))}
$$
For a morphism $f\colon (\Gamma,w,m)\longrightarrow (\Gamma',w',m')$ in $HJ_{g}$ we set:
$$
\mathcal {HM}_{g}^{trop}\colon \colon f\longmapsto \sigma^{hair} f,
$$
where $\sigma^{hair} f\colon \sigma_{\Gamma'}^{hair}\longrightarrow \sigma_{\Gamma}^{hair}$ is a map that sends an $e'$-coordinate (resp. an $h'$-coordinate) of a topological space $\mathbb R_{\geq 0}^{E(\Gamma')\sqcup m^{-1}(V(\Gamma'))}$ to an $e$-coordinate (resp. an $h$-coordinate) of space $\mathbb R_{\geq 0}^{E(\Gamma)\sqcup m^{-1}(V(\Gamma'))}$ if $f$ sends the edge $e$ (resp. the hair $h$) to the edge $e'$ (resp. the hair $h'$) and zero otherwise.
\end{Def}
Analogous to the case of moduli diagrams $\mathcal M_{g,n}^{trop}$ each topological space $\sigma^{hair}_{\Gamma}$ is naturally stratified by graphs $\Gamma'\in HJ_{g,\Gamma}^{\circ}.$ We will denote this stratification by $\mathcal S_{\Gamma}^{hair}$ and the resulting stratification on the $HJ_g^{\circ}$-diagram $\mathcal {HM}_{g}^{trop}$ by $\mathcal S^{hair}.$
\par\medskip
We also consider a natural $HJ_{g}^{\circ}$-diagram $\mathcal{HM}_{g,\emptyset}^{trop}$ defined by sending each stable weighted marked graph $\Gamma$ to the topological space $\sigma_{\Gamma}.$ We have a canonical closed morphism between diagrams $v_{\emptyset}\colon \mathcal{HM}_{g,\emptyset}^{trop}\hookrightarrow \mathcal{HM}^{trop}_{g}.$ The complement $HJ_{g}^{\circ}$-diagram will be denoted by $\mathcal{HM}_{g,\geq 1}^{trop}.$ We will denote a value of this diagram at a stable graph $\Gamma$ by $h_{\geq 1}\sigma_{\Gamma}^{hair}.$ For every $g\geq 2$ we have a sequence of morphisms of $HJ_{g}^{\circ}$-diagrams:
\begin{equation}\label{Diag2}
\begin{diagram}[height=3em,width=3em]
\mathcal H_{\geq 1}\mathcal M_{g}^{trop} &   \rTo^{v_{\geq 1}}   & \mathcal{HM}_{g}^{trop} &   \lTo^{v_{\emptyset}}   & \mathcal H_{\emptyset}\mathcal M_{g}^{trop}
\end{diagram}
\end{equation}

\subsection{Sheaves on $\mathcal{HM}^{trop}_g$}

We will define an analog of the combinatorial Deligne-Mumford DG-sheaves $\EuScript {DM}_{g,n}$ on the diagram $\mathcal {HM}^{trop}_{g}.$ To do it let us recall that for every finite set $I$ with a subset $J\subset I$ there is a surjective morphism of stacks:
\begin{equation}\label{forg}
\pi_{J}\colon \overline{\mathcal M}_{g,I}\longrightarrow \overline {\mathcal M}_{g,I\setminus J},
\end{equation} 
defined by forgetting $J$-labelled marked points and stabilising the resulting nodal curve \cite{KN}. In particular, we will use a notation $\pi$ when $J=I.$ 
\par\medskip
For a stratified space $\sigma^{hair}_{\Gamma}$ associated with the stable marked graph $\Gamma\in HJ_g$ we define an $\mathcal S_{\Gamma}^{hair}$-smooth combinatorial DG-sheaf $\EuScript {DM}_{\Gamma}^{hair}$ on $\sigma^{hair}_{\Gamma}$ by the following rule:
\begin{itemize}
  \item For every graph $\Gamma'\in HJ_{g,\Gamma}^{\circ}$ we set:
$$
R\Gamma(S_{\Gamma'},\EuScript{DM}_{\Gamma}^{hair}):=\bigotimes_{v\in V(\Gamma')} C^{\hdot}(\overline{\mathcal M}_{w(v),n_v},\mathbb Q)
$$
  \item For every inclusion of strata $S_{\Gamma''} \subset \overline{S}_{\Gamma'}$ we define a variation morphism:
$$
var_{\Gamma'', \Gamma'}\colon R\Gamma(S_{\Gamma'},\EuScript{DM}_{\Gamma}^{hair})\longrightarrow R\Gamma(S_{\Gamma''},\EuScript{DM}_{\Gamma}^{hair})
$$
by the following rule:
\begin{enumerate}[(i)]
\par\medskip
  \item If the inclusion of strata is induced by a contraction of edges we define the variation operator as it was defined in the case of the DG-sheaf $\EuScript {DM}_{g,n}.$
\par\medskip
  \item If $\Gamma''$ is obtained from $\Gamma'$ by a contraction of hairs $h_{v_j}^i$ at a vertex $v_j$ we define $var_{\Gamma'', \Gamma'}$ as the composition of morphisms:
$$
\mathrm {id}\otimes \dots \otimes \pi_{h_{v_j}^i}^*\otimes \dots \otimes \mathrm {id},
$$
where a morphism: $$\pi_{h_{v_j}^i}^*\colon C^{\hdot}(\overline{\mathcal M}_{w(v_j),H_v\sqcup m^{-1}(v_j)},\mathbb Q)\longrightarrow C^{\hdot}(\overline{\mathcal M}_{w(v_j),H_{v_{j}}\sqcup m^{-1}(v_j)\sqcup h_{v_{j}}^i},\mathbb Q)$$ is placed at the vertex $v_j.$
\end{enumerate}

\end{itemize}
The direct check shows that for every $\Gamma\in HJ_g$ the combinatorial DG-sheaf $\EuScript{DM}_{\Gamma}^{hair}$ is well defined.

\begin{Def} For every $g\geq 2$ we define an $\mathcal S^{hair}$-smooth combinatorial DG-sheaf $\EuScript{DM}_{g}^{hair}:=\{\EuScript{DM}_{\Gamma}^{hair}\}_{\Gamma\in HJ_{g,n}}$ on the $HJ_{g,n}^{\circ}$-diagram $\mathcal {HM}^{trop}_{g},$ called a \textit{hairy Deligne-Mumford DG-sheaf} with the connecting quasi-isomorphisms:
$$
\alpha(m)\colon m^*\EuScript{DM}_{\Gamma}^{hair}\overset{\sim}{\longrightarrow}\EuScript{DM}_{\Gamma'}^{hair},\quad m\colon \Gamma\longrightarrow \Gamma'.
$$
Where $\alpha(m)$ is a natural quasi-isomorphism if $m$ is an edge contraction or a hair contraction and $\alpha(m)$ is a permutation morphism if $m$ is an automorphism of a graph. It is easy to see that all properties from Definition \ref{DGSh} are satisfied.
\end{Def}

\subsection{The hairy Getzler-Kapranov complex}
Analogous to Definition \ref{GKK} we give:
\begin{Def}\label{GKKK} For every $g\geq 2$ we define the \textit{hairy Getzler-Kapranov complex} $\textsf H\textsf {GK}_g^{\hdot}$ to be the complex that computes the compactly supported cohomology of the $HJ_g^{\circ}$-diagram $\mathcal {HM}_{g}^{trop}$ with coefficients in the hairy Deligne-Mumford sheaf $\EuScript{DM}_{g}^{hair}:$
$$
\textsf{HGK}_g^{\hdot}:=R\Gamma_c(\mathcal {HM}^{trop}_g,\EuScript{DM}_{g}^{hair})
$$
\end{Def}

\begin{remark} The definition of the hairy Getzler-Kapranov complex was also presented in \cite{AWZ}. In \textit{ibid.}, it is defined as the total complex associated with the skew symmetrisation of the Feynman transform of the modular cooperad $H^{\hdot}(\overline{\mathcal M},\mathbb Q)$ with an additional differential defined by adding a decorated hair. Note that methods of \cite{form} do not directly imply a formality quasi-isomorphism in this case. However, the methods of \cite{CH} do imply the formality quasi-isomorphism and therefore Definition \ref{GKKK} of the hairy Getzler-Kapranov complex coincides with one given in \cite{AWZ}. We thank Dan Petersen for pointing out this reference.

\end{remark}

Our first result (also proved in \cite{AWZ} (Theorem 30)) concerning the cohomology of the hairy Getzler-Kapranov complex is the following:

\begin{Prop}\label{van} The hairy Getzler-Kapranov complex $\textsf{HGK}_g^{\hdot}$ is acyclic.

\end{Prop}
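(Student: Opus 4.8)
The plan is to reduce the acyclicity of $\textsf{HGK}_g^\hdot=R\Gamma_c(\mathcal{HM}_g^{trop},\EuScript{DM}_g^{hair})$ to a local statement about the point-forgetting morphisms $\pi\colon\overline{\mathcal M}_{g,n+1}\to\overline{\mathcal M}_{g,n}$. First I would compute $\textsf{HGK}_g^\hdot$ exactly as in the proof of Proposition~\ref{KD}: the structure functor $HJ_g^\circ\to\mathbf 1$ factors through $\mathbf N\times\mathbf N$ recording the pair (number of edges, number of hairs), the relevant left Kan extensions are exact because each $\Gamma\in HJ_g$ has a finite automorphism group $\mathrm{Aut}^h(\Gamma)$ and we work over $\mathbb Q$, and a (generalised) Mittag-Leffler argument identifies $\textsf{HGK}_g^\hdot$ with the totalisation of a bicomplex whose $(p,q)$-term is
$$
\bigoplus_{\Gamma\in HJ_g^{p,q}}\Bigl(\bigotimes_{v\in V(\Gamma)}C^\hdot(\overline{\mathcal M}_{w(v),n_v},\mathbb Q)\otimes\det(E(\Gamma))\otimes\det(m^{-1}(V(\Gamma)))\Bigr)^{\mathrm{Aut}^h(\Gamma)},
$$
with two commuting differentials: $\delta$, which splits a vertex and pulls the decorations back along the clutching morphisms (raising $p$), and $\chi$, which adds a hair at a vertex and acts on the decoration of that vertex as the pullback $\pi^\ast$ along the forgetful morphism (raising $q$). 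By stability $|E(\Gamma)|\le 3g-3+|m^{-1}(V(\Gamma))|$ and the number of hairs is bounded by the cohomological degree, so the filtration by the number of edges is finite in each degree and the associated spectral sequence converges.

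On the associated graded of the edge filtration only $\chi$ survives, and by the Künneth formula the $\chi$-complex attached to a fixed underlying hairless graph $\Gamma_0$ is the $\mathrm{Aut}(\Gamma_0)$-invariants of $\det(E(\Gamma_0))\otimes\bigotimes_{v\in V(\Gamma_0)}K_v$, where, with $n_v^0$ the valence of $v$ in $\Gamma_0$,
$$
K_v:=\bigoplus_{j}\bigl(C^\hdot(\overline{\mathcal M}_{w(v),\,n_v^0+j},\mathbb Q)\otimes\mathrm{sgn}_j\bigr)_{\Sigma_j},\qquad d=\text{skew-symmetrised }\pi^\ast,
$$
the sum being over those $j$ for which $2w(v)-2+n_v^0+j>0$ and $\Sigma_j$ permuting the $j$ new hairs together with the last $j$ marked points. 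Since invariants under a finite group over $\mathbb Q$ are exact, it suffices to show $E_1=0$, for which it suffices that each $K_v$ is acyclic --- with one caveat addressed below.

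The acyclicity of $K_v$ is the heart of the argument, and I expect it to be the main obstacle. The structural input is that the morphisms $\pi$ together with the boundary divisors $D_{i,n+1}\cong\overline{\mathcal M}_{g,n}$, each the image of a section of $\pi$, exhibit $[j]\mapsto C^\hdot(\overline{\mathcal M}_{g,n+j},\mathbb Q)$ as a coaugmented cosimplicial graded vector space carrying an \emph{extra codegeneracy} $s_{-1}$ --- restriction to $D_{\ast,\mathrm{last}}$ for a fixed marking $\ast$ --- with $s_{-1}\circ\pi^\ast=\mathrm{id}$ and the remaining cosimplicial identities; such an object has acyclic conormalised complex, and $K_v$ is a sign-twisted skew-symmetrised form of it. Unwinding the identities should produce the explicit contracting homotopy; the delicate point is reconciling the distinguished-marking contraction with the $\Sigma_j$-skew-symmetrisation and controlling the correction terms that appear when $\chi(\alpha)$ is restricted to a boundary divisor other than $D_{\ast,\mathrm{last}}$. (As an alternative one can use that the Leray spectral sequence of the universal curve degenerates, identifying $\mathrm{coker}(\pi^\ast)$ with $C^{\hdot-1}(\overline{\mathcal M}_{g,n},R^1\pi_\ast\mathbb Q)\oplus C^{\hdot-2}(\overline{\mathcal M}_{g,n},R^2\pi_\ast\mathbb Q)$, and run a secondary spectral sequence on $K_v$.) The caveat is that $K_v$ need not be acyclic when the hairless vertex sits in the unstable boundary range $2w(v)-2+n_v^0=0$ --- the coaugmentation is then the operadic unit $\mathbb Q$ and $H^\hdot(K_v)$ is one-dimensional; but for $g\ge 2$ a connected genus-$g$ graph cannot have all of its vertices in that range, so whenever the edge-structure contributes the Künneth product $\bigotimes_v H^\hdot(K_v)$ still vanishes, and $E_1=0$ as required.
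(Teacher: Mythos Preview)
Your overall strategy --- filter by the number of internal edges and reduce to the acyclicity of the per-vertex ``hair'' complexes $K_v$ --- is exactly what the paper does. The difference is in the contracting homotopy for $K_v$. The paper does \emph{not} use sections of the forgetful map; it builds
\[
H(\omega)=\sum_{h\in m^{-1}(v)}\pi_{h!}\Bigl(\omega\smile\tfrac{1}{2w(v)-2+n_v}\psi_h\Bigr),
\]
using the Gysin pushforward and the identification of $\psi_h$ with the Euler class of $\pi_h$, and checks $(|m^{-1}(v)|{+}1)\,\omega=H\widetilde\nabla_1(\omega)+\widetilde\nabla_1 H(\omega)$.

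Your proposed extra codegeneracy $s_{-1}=\sigma_\ast^\ast$ (restriction to $D_{\ast,\mathrm{last}}$) needs a \emph{structural} marking $\ast$, i.e.\ one of the $n_v^0=\mathrm{val}(v)$ half-edges of the hairless graph $\Gamma_0$. That is available whenever $|E(\Gamma_0)|\ge 1$, since connectivity then forces $\mathrm{val}(v)\ge 1$ at every vertex. But at the $p=0$ layer the only hairless graph is the single vertex $\bullet_g$ with $n_v^0=0$: the forgetful map $\pi\colon\overline{\mathcal M}_{g,1}\to\overline{\mathcal M}_g$ has no section, there is no $D_{\ast,\mathrm{last}}$, and your $s_{-1}$ is simply undefined. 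This is a genuine gap, and it is \emph{not} the caveat you flag: your caveat concerns the unstable range $2w(v)-2+n_v^0=0$, whereas here $2g-2>0$ and the vertex is stable --- there is just no fixed marking to hook the codegeneracy onto. Since $p=0$ is a full column of your $E_1$-page, the spectral-sequence argument does not close without it. Your Leray alternative, as stated, is too vague to fill the hole; the cleanest fix is precisely the $\psi$-class/Gysin homotopy the paper uses, which requires no structural marking and therefore handles the complex
\[
C^\hdot(\overline{\mathcal M}_g)\to C^\hdot(\overline{\mathcal M}_{g,1})\to\bigl(C^\hdot(\overline{\mathcal M}_{g,2})\otimes\mathrm{sgn}_2\bigr)^{\Sigma_2}\to\cdots
\]
on the same footing as all the others.
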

\begin{proof} To prove this result, we realise the hairy Deligne-Mumford DG-sheaf as the total object of the Cousin complex defined by a number of internal edges in stable graphs.
\par\medskip
For every $k\geq 0$ we introduce the following $HJ_g^{\circ}$-diagram:
\begin{equation}\label{fildi}
\mathcal {F}_{\geq k}\mathcal {HM}_g^{trop}\colon HJ_g^{\circ}\longrightarrow \textsf {Top}.
\end{equation}
Defined by a rule: a stratified topological space $\sigma_{\Gamma}^{hair}$ carries the following filtration:
$$
\sigma_{\Gamma}^{hair}:=e_{\geq 0}\sigma_{\Gamma}^{hair}\supset \dots \supset e_{\geq |E(\Gamma)|}\sigma_{\Gamma}^{hair},
$$
where $e_{\geq k}\sigma_{\Gamma}^{hair}$ is a subspace where at least $k$ "internal edge coordinates" are non-zero. Hence we define $\mathcal F_{\geq k} \mathcal M_{g}^{trop}$ by sending a stable graph $\Gamma$ to $e_{\geq k}\sigma_{\Gamma}^{hair}$ if $k\leq |E(\Gamma)|$ and otherwise to the empty set. We get the following decreasing filtration:
$$
\mathcal {HM}_g^{trop}:=\mathcal {F}_{\geq 0}\mathcal {HM}_g^{trop}\hookleftarrow \mathcal {F}_{\geq 1}\mathcal {HM}_g^{trop}\hookleftarrow\dots,
$$
where the morphism $j_{\geq k}\colon \mathcal {F}_{\geq k}\mathcal {HM}_g^{trop} \hookrightarrow \mathcal{HM}_g^{trop}$ is closed. We have the associated sequence of DG-combinatorial sheaves:
\begin{equation}\label{seq}
\begin{CD}
\EuScript {DM}_g^{hair}=j_{\geq 0!}j_{\geq 0}^*\EuScript {DM}_g^{hair}@<<<\dots @<<<j_{\geq k!}j_{\geq k}^*\EuScript {DM}_g^{hair} @<<<\dots
\end{CD}
\end{equation}
We have the Postnikov system associated with \eqref{seq} (cf. \cite{KaSh} and \cite{GK}). Hence for every $g\geq 2$ we realise $\EuScript {DM}_g^{hair}$ as the convolution object of the following complex of DG-constructible sheaves:
\begin{equation*}
\begin{CD}
j_{0!}j_{0}^*\EuScript {DM}_g^{hair} @>>> \dots @>{}>> j_{k!}j_{k}^*\EuScript {DM}_g^{hair}[k] @>{}>> \dots,
\end{CD}
\end{equation*}
where $j_k\colon \mathcal {F}_{k}\mathcal {HM}_g^{trop} \hookrightarrow \mathcal{HM}_g^{trop}$ is an open inclusion of diagrams. Here $\mathcal {F}_{k}\mathcal {HM}_g^{trop}$ is the open complement to $\mathcal {F}_{k+1}\mathcal {HM}_g^{trop}.$ We will show that the for every $k$ the following complex:

$$
\textsf{HGK}_{g,n}^{\hdot}:=R\Gamma_c(\mathcal {HM}_g^{trop},j_{n!}j_{n}^*\EuScript {DM}_g^{hair})[n]
$$
is contractible. Acting like in the proof of Theorem \ref{KD} the complex $\textsf{HGK}_{g,k}^{\hdot}$ can be realised as the total DG-vector space of a complex:
\begin{align}\label{fi}
\bigoplus_{\Gamma\in HJ^{k,0}_{g}}&\left(\bigotimes_{v\in V(\Gamma)} C^{\hdot}(\overline{\mathcal M}_{w(v),n_v},\mathbb Q)\otimes\mathrm {det}^h(\Gamma)\right)^{\mathrm {Aut}^h(\Gamma)}\rightarrow \dots \\
\dots &\rightarrow  \bigoplus_{\Gamma\in HJ^{k,n}_{g}}\left(\bigotimes_{v\in V(\Gamma)} C^{\hdot}(\overline{\mathcal M}_{w(v),n_v},\mathbb Q)\otimes\mathrm {det}^h(\Gamma)\right)^{\mathrm {Aut}^h(\Gamma)}\rightarrow\dots
\end{align}
Here ${\det}^h(\Gamma):=\textsf {or}_{S_{\Gamma}^{hair}}\cong \det(E(\Gamma))\otimes \det(m^{-1}(V(\Gamma)).$ Note that:
$$
C^{\hdot}(\overline{\mathcal M}_{\Gamma}/ \mathrm {Aut}^h(\Gamma),\varepsilon_p\otimes \epsilon_n)\cong\left(\bigotimes_{v\in V(\Gamma)} C^{\hdot}(\overline{\mathcal M}_{w(v),n_v},\mathbb Q)\otimes\mathrm {det}^h(\Gamma)\right)^{\mathrm {Aut}^h(\Gamma)}.
$$
Where by $\epsilon_n$ we have denoted a sign local system on the moduli stack $\overline{\mathcal M}_{\Gamma}/ \mathrm {Aut}^h(\Gamma)$ with a monodromy defined by a sign representation that acts on hairs of a stable graph $\Gamma.$ A differential in \eqref{fi} is defined by the following rule:
\par\medskip
Suppose that $\Gamma$ and $\Gamma'$ are two objects of $HJ_g$ such that $\Gamma$ has exactly $k$-hairs with the following property: if one contracts one of them the resulting graph will be isomorphic to $\Gamma'.$ We will denote this set of hairs by $K.$ Consider the following correspondence:
\begin{equation*}
\begin{diagram}[height=2.5em,width=2.5em]
(\overline{\mathcal M}_{\Gamma}\times K)/\mathrm {Aut}^{h}(\Gamma)&   \rTo^{\widetilde{\pi}_{\Gamma',\Gamma}}   &  \overline{\mathcal M}_{\Gamma'}/\mathrm {Aut}^{h}(\Gamma') &
\\
\dTo^{\pi_{\Gamma',\Gamma}} & &  && \\
\overline{\mathcal M}_{\Gamma}/\mathrm {Aut}^{h}(\Gamma)  &      &    \\
\end{diagram}
\end{equation*}
Where a morphism $\widetilde{\pi}_{\Gamma',\Gamma}$ is given by applying a pullback along the morphism $\pi_h$ at the hair $h\in K$ and $\pi_{\Gamma',\Gamma}$ is the projection morphism. Hence we get a differential defined by the rule:
\begin{equation}\label{Bdiff2}
\widetilde{\nabla}_1:=\sum_{\Gamma\in HJ^{k,n}_{g}\,,\Gamma'\in HJ_{g}^{k,n-1}\,\Gamma\twoheadrightarrow \Gamma'}
\pi_{\Gamma',\Gamma!}\circ\widetilde{\pi}_{\Gamma',\Gamma}^*
\end{equation}
Note that on a component $C^{\hdot}(\overline{\mathcal M}_{w(v),n_v},\mathbb Q)$ corresponding to the vertex $v\in V(\Gamma)$ with $m^{-1}(v)\neq \emptyset$ the differential $\widetilde{\nabla}_1$ acts by the following rule:
\begin{equation}\label{expc}
\widetilde{\nabla}_1\colon \omega\longmapsto \sum_{h\in m^{-1}(v)}(-1)^{h-1} \pi_h^*\omega
\end{equation}

\par\medskip

We shall construct an explicit homotopy and get the desired result. For every $k\geq 1$ we define the morphism
$$H\colon \textsf{HGK}_{g,k}^{\hdot}\longrightarrow \textsf{HGK}_{g,k}^{\hdot}[-1]$$
by the following rule:
\par\medskip
\begin{enumerate}[(i)]
  \item Suppose that $m^{-1}(v)=\emptyset,$ then we define this morphism to be zero on this component. 
\par\medskip
  \item Suppose that $m^{-1}(v)\neq \emptyset$ and let $\omega$ be an element of component $C^{\hdot}(\overline{\mathcal M}_{w(v),n_v},\mathbb Q),$ hence we set:
$$
H\colon \omega \longmapsto \sum_{h\in m^{-1}(v)}  \pi_{h!}(\omega \smile \widetilde{\psi}_{h})
$$
\end{enumerate}
Where:
$$\pi_{h!}\colon C^{\hdot}(\overline{\mathcal M}_{w(v),val_v+|m^{-1}(v)|},\mathbb Q)\longrightarrow C^{\hdot-2}(\overline{\mathcal M}_{w(v),val_v+|m^{-1}(v)\setminus h|},\mathbb Q)$$
is a Gysin pushforward morphism along the forgetful morphism $\pi_h$ and:
$$\widetilde{\psi}_h:=\frac{1}{2w(v)-2+n_v}\psi_h\in H^2(\overline{\mathcal M}_{w(v),val_v+|m^{-1}(v)|},\mathbb Q)$$
is a normalised psi-class at $h.$
Further applying \eqref{expc} together with a projection formula and the fact that the psi-class $\psi_h$ can be identified with the Euler class of the fibration $\pi_h$ one can explicitly compute (for details see Theorem $30$ in \cite{AWZ}): $$
(|m^{-1}(v)|+1)\omega=H(\widetilde{\nabla}_1(\omega))+\widetilde{\nabla}_1(H(\omega)),\qquad \omega \in C^{\hdot}(\overline{\mathcal M}_{w(v),n_v},\mathbb Q).$$ 
\end{proof}

Denote by $\textsf H_{\geq 1}\textsf{GK}_g^{\hdot}$ a decorated graph complex that computes the compactly supported cohomology of the $HJ_g^{\circ}$-diagram $\mathcal {H}_{\geq 1}\mathcal {M}^{trop}_{g}$ with coefficients in the combinatorial DG-sheaf $\EuScript {DM}_{g,\geq 1}^{hair}:=v^*_{\geq 1}\EuScript {DM}_{g}^{hair}:$
\begin{equation}\label{hgk}
\textsf H_{\geq 1}\textsf{GK}_g^{\hdot}:=R\Gamma_c(\mathcal {H}_{\geq 1}\mathcal {M}^{trop}_{g},\EuScript {DM}_{g,\geq 1}^{hair})
\end{equation}

We will call this complex a \textit{hairy Getzler-Kapranov complex with at least one hair} or just the hairy Getzler-Kapranov complex if it will not lead to confusion. We have the following immediate:

\begin{Cor}\label{cor} For $g>1$ the hairy Getzler-Kapranov complex $\textsf H_{\geq 1}\textsf{GK}_g^{\hdot}$ is quasi-isomorphic to $C_c^{\hdot}(\mathcal M_{g},\mathbb Q)[-1].$

\end{Cor}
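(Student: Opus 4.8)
The plan is to derive Corollary \ref{cor} directly from the Gysin triangle associated with the closed--open decomposition \eqref{Diag2} of the $HJ_g^{\circ}$-diagram $\mathcal{HM}_g^{trop}$, combined with the acyclicity of $\textsf{HGK}_g^{\hdot}$ established in Proposition \ref{van}. Concretely, applying the functor $R\Gamma_c(\mathcal{HM}_g^{trop},-)$ to the distinguished triangle
$$
v_{\geq 1!}v_{\geq 1}^*\EuScript{DM}_g^{hair}\longrightarrow \EuScript{DM}_g^{hair}\longrightarrow v_{\emptyset*}v_{\emptyset}^*\EuScript{DM}_g^{hair}\longrightarrow v_{\geq 1!}v_{\geq 1}^*\EuScript{DM}_g^{hair}[1]
$$
coming from the open inclusion $v_{\geq 1}\colon \mathcal{H}_{\geq 1}\mathcal{M}_g^{trop}\hookrightarrow \mathcal{HM}_g^{trop}$ and its closed complement $v_{\emptyset}$, and using the identification of the middle term with $\textsf{HGK}_g^{\hdot}$ from Definition \ref{GKKK}, one obtains a long exact sequence. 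Since the middle term is acyclic by Proposition \ref{van}, the connecting map induces a quasi-isomorphism
$$
\textsf{H}_{\geq 1}\textsf{GK}_g^{\hdot}\overset{\sim}{\longrightarrow} R\Gamma_c(\mathcal{HM}_g^{trop},v_{\emptyset*}v_{\emptyset}^*\EuScript{DM}_g^{hair})[-1].
$$

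The next step is to identify the right-hand side with $C_c^{\hdot}(\mathcal M_g,\mathbb Q)$. By proper base change, $R\Gamma_c(\mathcal{HM}_g^{trop},v_{\emptyset*}v_{\emptyset}^*\EuScript{DM}_g^{hair})$ agrees with $R\Gamma_c$ of the diagram $\mathcal{HM}_{g,\emptyset}^{trop}$ with coefficients in the restricted sheaf $v_{\emptyset}^*\EuScript{DM}_g^{hair}$. Now the diagram $\mathcal{HM}_{g,\emptyset}^{trop}$ sends a stable weighted marked graph $\Gamma$ to $\sigma_\Gamma$ (the tropical cone on edges only, with no hair coordinates), and the restricted decoration is still $\bigotimes_{v}C^{\hdot}(\overline{\mathcal M}_{w(v),n_v},\mathbb Q)$. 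Passing to the full subcategory of graphs \emph{without} markings (those $\Gamma$ with $m^{-1}(V(\Gamma))=\emptyset$) is harmless: any graph with hairs contributes a summand on which the internal differential involves only genuine edges, and — exactly as in the proof of Proposition \ref{van} but now \emph{without} inverting the closed hair stratum — one checks that the $\mathbf N$-indexed Mittag-Leffler argument of Proposition \ref{KD} applies, reducing $R\Gamma_c(\mathcal{HM}_{g,\emptyset}^{trop},v_\emptyset^*\EuScript{DM}_g^{hair})$ to $R\Gamma_c(\mathcal M_g^{trop},\EuScript{DM}_{g,\emptyset})$, which by Proposition \ref{KD} is quasi-isomorphic to $C_c^{\hdot}(\mathcal M_{g,\emptyset},\mathbb Q)=C_c^{\hdot}(\mathcal M_g,\mathbb Q)$. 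Combining with the shift from the previous paragraph gives the claimed quasi-isomorphism $\textsf{H}_{\geq 1}\textsf{GK}_g^{\hdot}\simeq C_c^{\hdot}(\mathcal M_g,\mathbb Q)[-1]$.

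The main obstacle, and the step requiring genuine care, is the bookkeeping of signs and orientation local systems in the passage from the hair-decorated complex $\textsf{HGK}_g^{\hdot}$ to the hair-free one: the orientation sheaf ${\det}^h(\Gamma)=\det(E(\Gamma))\otimes\det(m^{-1}(V(\Gamma)))$ splits off a hair-determinant factor that must be tracked through the Gysin triangle, and one has to verify that the connecting homomorphism of the triangle is indeed the degree $-1$ shift (and not something with an unexpected sign-twist that would obstruct the identification). A secondary technical point is verifying that the restriction $v_\emptyset^*\EuScript{DM}_g^{hair}$ really does recover $\EuScript{DM}_{g,\emptyset}$ as a combinatorial DG-sheaf including its connecting quasi-isomorphisms — this is "easy to see" in the style of Definition \ref{DGSh} but deserves an explicit sentence. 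Everything else (proper base change for diagrams, exactness of $\mathrm{LKan}_P$ over $\mathbb Q$, the Mittag-Leffler vanishing of $\lim^1$) is available from Section 3 and used verbatim.
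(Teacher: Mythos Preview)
Your proposal is correct and follows essentially the same route as the paper: apply the Gysin triangle for \eqref{Diag2}, use Proposition~\ref{van} to kill the middle term, and then identify $R\Gamma_c(\mathcal{H}_\emptyset\mathcal{M}_g^{trop}, v_\emptyset^*\EuScript{DM}_g^{hair})$ with the Getzler--Kapranov complex $\textsf{GK}_g^{\hdot}$, hence with $C_c^{\hdot}(\mathcal{M}_g,\mathbb Q)$ by Proposition~\ref{KD}. The paper states the last identification in one line; your cofinality discussion (passing from the $HJ_g^{\circ}$-limit to the $J_g^{\circ}$-limit via the ``forget all hairs'' reflection) makes explicit what the paper leaves implicit, and your worry about hair-determinant signs is unnecessary here since on the closed locus all hair coordinates vanish and ${\det}^h$ reduces to $\det(E(\Gamma))$.
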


\begin{proof} Consider the Gysin triangle associated with diagram \eqref{Diag2}:
$$
v_{\geq 1!}v^*_{\geq 1}\EuScript {DM}_{g}^{hair}\longrightarrow\EuScript {DM}_{g}^{hair}\longrightarrow v_{\emptyset*}v^*_{\emptyset}\EuScript {DM}_{g}^{hair}\longrightarrow v_{\geq 1!}v^*_{\geq 1}\EuScript {DM}_{g}^{hair}[1].
$$
Applying the compactly support cohomology functor and using Proposition \ref{van} we obtain that $R\Gamma_c(\mathcal {H}_{\geq 1}\mathcal {M}_{g}^{trop},v^*_{\geq 1}\EuScript {DM}_{g}^{hair})$ is quasi-isomorphic to a complex:
$$R\Gamma_c(\mathcal {H}_{\emptyset}\mathcal {M}_{g}^{trop},v^*_{\emptyset}\EuScript {DM}_{g}^{hair})[-1].$$ The latter complex can be identified with the shifted Getzler-Kapranov complex $\textsf{GK}_g[-1]^{\hdot}$ and hence by Proposition \ref{KD} with the DG-vector space $C_c^{\hdot}(\mathcal M_{g},\mathbb Q)[-1].$
\end{proof}

\begin{remark} It is possible to extend the definition of the hairy Getzler-Kapranov complex $\textsf H_{\geq 1}\textsf {GK}_g^{\hdot},$ to the case when $g=1.$ One can consider a category $H_{\geq 1}J_g$ with objects being stable marked graphs of genus $g$ with \textit{at least one marking}. Note that we have a natural inclusion of categories $H_{\geq 1}J_g\hookrightarrow HJ_g.$ The category $H_{\geq 1}J_g$ is well defined for $g=1$ and one can consider a $H_{\geq 1}J_g^{\circ}$-diagram $\mathcal H_{\geq 1}\mathcal M_g^{trop}.$ For $g=1$ we can define the hairy Getzler-Kapranov complex $\textsf H_{\geq 1}\textsf {GK}_1$ verbatim to \eqref{hgk}. Analogous to the proof of Proposition \ref{van} one can compute the cohomology of $\textsf H_{\geq 1}\textsf {GK}_1^{\hdot}$ (see Theorem \ref{th}).

\end{remark}

\subsection{The Willwacher differential $\nabla_1$} In this subsection, we will relate the hairy Getzler-Kapranov complex $\textsf H_{\geq 1}\textsf{GK}_g$ to the certain double complex which comes from varying a number of markings on the moduli stacks $\mathcal M_{g,n}.$ To do it let us recall some definitions:
\par\medskip
For $g>0$ and a finite set $I$ such that $2g+|I|-2> 0$ we denote by $\mathcal M_{g,I}^{rt}$ \textit{the moduli stack of $I$-marked curves of genus $g$ with rational tails}. For $g\geq 2$ this stack can be defined as a fibered product:\footnote{Morphisms in the corresponding fibered square are given by the canonical open inclusion $j_I$ and the forgetful morphism $\pi\colon \overline{\mathcal M}_{g,I}\longrightarrow \overline{\mathcal M}_g$ \eqref{forg}.}
\begin{equation}
\mathcal M_{g,I}^{rt}:= \overline{\mathcal M}_{g,I}\times_{\overline{\mathcal M}_{g} } \mathcal M_{g}
\end{equation}
For $g=1$ the stack $\mathcal M_{g,I}^{rt}$ can be defined as the \textit{moduli stack of curves with compact Jacobian} $\mathcal M_{g,n}^{c}$ i.e. is consists of nodal curves with a dual graph being a tree. By the definition we have $\mathcal M_{g,1}^{rt}\cong \mathcal M_{g,1}.$ By the construction for every $x$ we have a proper morphism of stacks:
\begin{equation}
\mu_x\colon \mathcal M_{g,I\cup \{x\}}^{rt}\longrightarrow \mathcal M_{g,I}^{rt}.
\end{equation}
We obtain the following symmetric $\Delta$-stack $\mathcal M_{g,\hdot}^{rt}$\footnote{By the symmetric $\Delta$-stack we understand a functor from a category of symmetric semi-simplicial sets $I^{op}$ to the category of Deligne-Mumford stack with proper morphisms.}:
\begin{equation}
\begin{diagram}[height=2.5em,width=4em]
\mathcal M_{g,1}^{rt} &    \pile{\lTo^{}\\  \\ \lTo_{}}   &  \mathcal M_{g,2}^{rt}  &  \pile{\lTo_{}\\  \\ \lTo_{}\\ \\ \lTo_{}} & \mathcal M_{g,3}^{rt} & \dots & \\
\end{diagram}
\end{equation}
Applying the functor of the cochains with compact support we obtain the symmetric $\Delta$-cochain complex. By a version of the Dold-Puppe construction we construct the following DG-vector space:
\begin{equation}\label{rt1}
\begin{CD}
C_c^{\hdot}(\mathcal M_{g,1}^{rt},\mathbb Q) @>{\nabla_1^{rt}}>> \dots @>{\nabla_1^{rt}}>> \left(C_c^{\hdot}(\mathcal M_{g,n}^{rt},\mathbb Q)\otimes_{\Sigma_n} \mathrm {sgn}_n\right)^{\Sigma_n} @>{\nabla_1^{rt}}>> \dots
\end{CD}
\end{equation}
Where a morphism $\nabla_1^{rt}$ is defined by the following rule. For every $n\geq 1$ denote by $\mathcal M_{\bullet_{g,n}}^{rt}$ the following stack:
$$
\mathcal M_{\bullet_{g,n}}^{rt}:=\mathcal M_{g,n}^{rt} / \mathrm {Aut}^h(\bullet_{g,n})
$$
Then one can consider the following correspondence:
\begin{equation*}
\begin{diagram}[height=2.5em,width=2.5em]
\mathcal M_{\bullet_{g,n},\bullet_{g,n+1}}^{rt}&   \rTo^{\widetilde{\mu}_{\bullet_{g,n},\bullet_{g,n+1}}}   &  \mathcal M_{\bullet_{g,n}}^{rt} &  \\
\dTo^{\pi_{\bullet_{g,n},\bullet_{g,n+1}}} & &  && \\
\mathcal M_{\bullet_{g,n+1}}^{rt}  &      &    \\
\end{diagram}
\end{equation*}
Where a stack $\mathcal M_{\bullet_{g,n},\bullet_{g,n+1}}^{rt}$ is defined as $\big(\mathcal M_{g,n+1}^{rt}\times m^{-1}(V(\bullet_{g,n+1})\big)/ \Sigma_{n+1}$ and $\pi_{\bullet_{g,n},\bullet_{g,n+1}}$ is the projection morphism. $\mu_{\bullet_{g,n},\bullet_{g,n+1}}$ is defined as a morphism that factorises the morphism $\big(\mathcal M_{g,n+1}^{rt}\times m^{-1}(V(\bullet_{g,n+1})\big)\longrightarrow \mathcal M_{\bullet_{g,n}}^{rt}.$ The later morphism is defined by applying the morphism $\mu_i$ along the $i$-hair of $\bullet_{g,n+1}.$ We denote by $\epsilon_n$ the sign local system on the stack $\mathcal M_{\bullet_{g,n}}^{rt}$ with the monodromy defined by the sign representation of $\mathrm {Aut}^{h}(\bullet_{g,n}):=\Sigma_n.$
We define the following pull-push morphism of DG-vector spaces (cf. \eqref{Bdiff2}):
$$
\nabla_1^{rt}\colon C_c^{\hdot}(\mathcal M_{\bullet_{g,n}}^{rt},\epsilon_n)\longrightarrow C^{\hdot}_c(\mathcal M_{\bullet_{g,n+1}}^{rt},\epsilon_{n+1})$$
By the rule:
$$
\nabla_1^{rt}:=\pi_{\bullet_{g,n},\bullet_{g,n+1}!}\mu^*_{\bullet_{g,n},\bullet_{g,n+1}}
$$
Note the cochains with compact support of $\mathcal M_{\bullet_{g,n}}^{rt}$ with coefficients in the sign local system can be naturally identified with the DG-vector space $(C_c^{\hdot}(\mathcal M_{g,n}^{rt},\mathbb Q)\otimes_{\Sigma_n} \mathrm {sgn_n})^{\Sigma_n}$ and hence we get the desired operator.
\par\medskip
The moduli stack $\mathcal M_{g,n}^{rt}$ is equipped with a natural inclusion $j^{rt}\colon \mathcal M_{g,n}\hookrightarrow  \mathcal M_{g,n}^{rt}.$ Then we have the following:
\begin{Prop}\label{quasirt} For $2g+n-2>0$ a $!$-extension functor along $j^{rt}$ induces the quasi-isomorphism of complexes which computes the compactly supported cohomology with coefficients in the sign local systems:
\begin{equation}\label{rt}
j_!^{rt}\colon C_c^{\hdot}(\mathcal M_{g,n}/\Sigma_n,\epsilon_n) \overset{\sim}{\longrightarrow}C_c^{\hdot}(\mathcal M_{g,n}^{rt}/\Sigma_n,\epsilon_n)
\end{equation}

\end{Prop}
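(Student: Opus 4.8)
The plan is to deduce \eqref{rt} from a localisation triangle together with the vanishing of the sign-twisted compactly supported cohomology of the rational-tails boundary. The key point is that $\mathcal M_{g,n}^{rt}$ is a \emph{smooth open substack} of $\overline{\mathcal M}_{g,n}$: for $g\geq2$ it is the preimage of the open substack $\mathcal M_g\subset\overline{\mathcal M}_g$ under the forgetful morphism, and for $g=1$ it is the open compact-type locus (for $g=0$ one simply reads $\mathcal M_{0,n}^{rt}:=\overline{\mathcal M}_{0,n}$). Hence $Z:=\mathcal M_{g,n}^{rt}\setminus\mathcal M_{g,n}$ is a normal crossings divisor in $\mathcal M_{g,n}^{rt}$, and the standard distinguished triangle $j^{rt}_!\,j^{rt\,*}\longrightarrow\mathrm{Id}\longrightarrow i_*i^*\longrightarrow j^{rt}_!\,j^{rt\,*}[1]$ attached to the open immersion $j^{rt}$ and its closed complement $i\colon Z/\Sigma_n\hookrightarrow\mathcal M_{g,n}^{rt}/\Sigma_n$, evaluated on the local system $\epsilon_n$ and on $R\Gamma_c$, exhibits $j^{rt}_!$ of \eqref{rt} as a morphism with cone $R\Gamma_c(Z/\Sigma_n,\epsilon_n)$. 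So it is enough to prove $R\Gamma_c(Z/\Sigma_n,\epsilon_n)=0$.

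To do so I would stratify $Z$ by the locally closed substacks $\mathcal M_\Gamma=\prod_{v\in V(\Gamma)}\mathcal M_{w(v),n_v}$, where $\Gamma$ runs over the stable graphs occurring in the boundary of $\mathcal M_{g,n}^{rt}$, namely the \emph{trees} with exactly one vertex of genus $g$ (none if $g=0$), all remaining vertices of genus $0$, and at least one edge. Applying $R\Gamma_c$ to the filtration by dimension of support, and using the Künneth formula and the transfer isomorphism over $\mathbb Q$ exactly as in the proof of Proposition \ref{KD}, one sees that $R\Gamma_c(Z/\Sigma_n,\epsilon_n)$ is an iterated extension of complexes with cohomology $\big(\bigotimes_{v\in V(\Gamma)}H^{\hdot}_c(\mathcal M_{w(v),n_v},\mathbb Q)\otimes\mathrm{sgn}_n\big)^{A_\Gamma}$, one for each isomorphism class of such $\Gamma$, where $A_\Gamma\subset\Sigma_n$ denotes the stabiliser of the isomorphism class of $\Gamma$. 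It therefore suffices to show each such space vanishes. Fix $\Gamma$. Being a tree with at least one edge, $\Gamma$ has a genus-$0$ leaf vertex $v_*$ (if the positive-genus vertex is itself a leaf, then a non-empty tree of genus-$0$ vertices hangs off it, which still has a leaf); by stability $v_*$ carries a single half-edge, a node, together with a set $T$ of $k:=|T|\geq2$ of the original markings. The subgroup $\Sigma_T\cong\Sigma_k$ permuting the markings of $v_*$ lies in $A_\Gamma$, acts trivially on all tensor factors but the one at $v_*$, and restricts $\mathrm{sgn}_n$ to $\mathrm{sgn}_k$; on the $v_*$-factor it acts on $H^{\hdot}_c(\mathcal M_{0,k+1},\mathbb Q)=H^{\hdot}_c(M_{0,k+1})$ by permuting $k$ of the $k+1$ marked points. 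Since $(-)^{A_\Gamma}\subset(-)^{\Sigma_k}$ and $\Sigma_k$-invariants of a tensor product on which $\Sigma_k$ acts through a single factor split that factor off, the space in question is contained in $\big(\bigotimes_{v\neq v_*}H^{\hdot}_c(\mathcal M_{w(v),n_v})\big)\otimes\big(H^{\hdot}_c(M_{0,k+1})\otimes\mathrm{sgn}_k\big)^{\Sigma_k}$. Thus everything reduces to the assertion that $\big(H^{\hdot}_c(M_{0,k+1},\mathbb Q)\otimes\mathrm{sgn}_k\big)^{\Sigma_k}=0$ for all $k\geq2$ --- equivalently, by Poincaré duality for the smooth variety $M_{0,k+1}$ (whose $\Sigma_{k+1}$-action, being holomorphic, fixes the orientation), that the sign representation of $\Sigma_k$ does not occur in $\mathrm{Res}^{\Sigma_{k+1}}_{\Sigma_k}H^{\hdot}(M_{0,k+1},\mathbb Q)$. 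This is the crux.

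I would prove this vanishing by induction on $m=k+1\geq3$, using the Fadell--Neuwirth fibration $M_{0,m}\to M_{0,m-1}$ that forgets a point: its fibre is $\mathbb P^1$ minus $m-1$ points, with cohomology $\mathbb Q$ in degree $0$ and the standard representation $V_{(m-2,1)}$ of $\Sigma_{m-1}$ in degree $1$, and with trivial monodromy, so that the Leray spectral sequence degenerates and $H^{\hdot}(M_{0,m})\cong H^{\hdot}(M_{0,m-1})\oplus\big(H^{\hdot}(M_{0,m-1})\otimes V_{(m-2,1)}\big)[-1]$ as $\Sigma_{m-1}$-representations. Granting inductively that neither $\mathrm{sgn}$ nor any $V_{(2,1^{\ast})}$ occurs in $H^{\hdot}(M_{0,m-1})$, and using the branching rules $\mathrm{Res}^{\Sigma_m}_{\Sigma_{m-1}}\mathrm{sgn}_m=\mathrm{sgn}_{m-1}$, $\mathrm{Res}^{\Sigma_m}_{\Sigma_{m-1}}V_{(2,1^{m-2})}=\mathrm{sgn}_{m-1}\oplus V_{(2,1^{m-3})}$ together with $\mathrm{sgn}_{m-1}\otimes V_{(m-2,1)}=V_{(2,1^{m-3})}$, one concludes that $\mathrm{sgn}_{m-1}$ does not occur in $\mathrm{Res}_{\Sigma_{m-1}}H^{\hdot}(M_{0,m})$, hence that neither $\mathrm{sgn}_m$ nor $V_{(2,1^{m-2})}$ occurs in $H^{\hdot}(M_{0,m})$; the base case $m=3$ ($M_{0,3}$ is a point) is immediate. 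Alternatively one may quote the known $\Sigma_m$-equivariant description of $H^{\hdot}(M_{0,m})$ (Arnold, Orlik--Solomon), or the classical vanishing of the sign-isotypic part of the Lie operad $\mathrm{Lie}(m-1)$ for $m-1\geq3$.

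The only non-formal ingredient is this last vanishing; the rest is the localisation triangle plus the stratification bookkeeping already carried out for Proposition \ref{KD}. I expect the points requiring some care to be: (i) the combinatorial description of the boundary graphs of $\mathcal M_{g,n}^{rt}$ as one-positive-genus-vertex trees --- guaranteeing a genus-$0$ leaf with at least two markings in every stratum --- which needs short separate verifications when $g=0$ and $g=1$; and (ii) keeping track of signs in the transfer isomorphism and in the reduction to a single tensor factor. The representation-theoretic vanishing for $H^{\hdot}(M_{0,m})$ is where the real input lies.
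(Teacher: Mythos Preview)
Your argument is correct and follows the same architecture as the paper: reduce via the Gysin triangle to the vanishing of $R\Gamma_c$ of the rational-tails boundary, stratify by dual graph, locate a genus-$0$ leaf vertex in every tree, and kill the corresponding tensor factor by a sign-isotypic vanishing. The difference lies in which stratification and which vanishing lemma you use.

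The paper stratifies the boundary $D$ by the \emph{closed} divisors $D_T$ and works with their normalisations $\overline{\mathcal M}_T/\mathrm{Aut}^h(T)$; the tensor factor at the leaf is then $H^{\hdot}(\overline{\mathcal M}_{0,k+1})$, and the key input is Lemma~\ref{vc0}: $H^{\hdot}(\overline{\mathcal M}_{0,k+1}/\Sigma_k,\epsilon_k)=0$. That lemma is proved by a Gysin induction, quoting \cite{VAS1} for the open part $H^{\hdot}_c(\mathcal M_{0,k}/\Sigma_k,\epsilon_k)=0$ and handling the boundary via the Deligne complex and the inductive hypothesis. You instead stratify $Z$ by the \emph{open} strata $\mathcal M_\Gamma$, so the leaf factor is $H^{\hdot}_c(M_{0,k+1})$, and you need only the weaker statement $\big(H^{\hdot}_c(M_{0,k+1})\otimes\mathrm{sgn}_k\big)^{\Sigma_k}=0$ --- which is exactly the intermediate equation \eqref{vc1} inside the paper's proof of the lemma. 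Your Fadell--Neuwirth induction (carrying along the auxiliary non-occurrence of $V_{(2,1^{m-2})}$ so that the branching rule closes up) establishes this from scratch, without invoking \cite{VAS1}. This is a genuinely more self-contained and slightly shorter route: you trade the stronger compactified lemma for the weaker open one, and replace the geometric Gysin induction by a representation-theoretic one. Both approaches ultimately rest on the same fact that $\mathrm{sgn}$ does not occur in the cohomology of $M_{0,m}$; Dotsenko's remark in the paper (via the hypercommutative operad) gives yet a third proof of the compactified version.
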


\begin{proof} Denote by $D$ the complement to $\mathcal M_{g,I}$ in $\mathcal M_{g,I}^{rt}.$ Then $D$ is a divisor with the following decomposition:
$$
D=\coprod D_{T},
$$
where $T\in HJ_g$ has a combinatorial type of a tree with a unique vertex of genus $g$ and all other vertices are marked by genus $0.$ We claim that a complex $C^{\hdot}_c(D / \Sigma_n,\epsilon_{n})$ which computes the compactly supported cohomology with coefficients in the sign local system is acyclic:
\par\medskip
A stable tree $T$ always has a vertex $v$ with the associated moduli space $\overline{\mathcal M}_{w(v),n_v}=\overline{\mathcal M}_{0,|m^{-1}(v)|+1},$ where $2\leq |m^{-1}(v)|\leq n$ is a number of hairs attached to $v$ and $1$ represents a unique root (an internal half-edge) attached to $v.$ Hence the claim will follow from the following:
\begin{Lemma}\label{vc0} For every $n>1$ we have an equivalence:
$$
H^{\hdot}(\overline{\mathcal M}_{0,n+1}/\Sigma_n,\epsilon_n)=0
$$
\end{Lemma}
\begin{proof} We will prove this statement by induction. Assume that the assertion is true for $k.$ We will prove it for $k+1:$
\par\medskip
Diagram \eqref{Diag1} induces the Gysin long exact sequence:
\begin{equation}
\dots\rightarrow H^{i-1}(\partial \mathcal M_{0,k+1}/\Sigma_k,\epsilon_k)\rightarrow H^i_c(\mathcal M_{0,k+1}/\Sigma_k,\epsilon_k)\rightarrow H^i_c(\overline{\mathcal M}_{0,k+1}/\Sigma_k,\epsilon_k)\rightarrow\dots
\end{equation}
First, we will show that the compactly supported cohomology of the smooth locus vanishes:
\begin{equation}\label{vc1}
H^{\hdot}_c(\mathcal M_{0,k+1}/\Sigma_k,\epsilon_k)=0
\end{equation} 
Namely due to \cite{VAS1} we know that $H^{\hdot}_c(\mathcal M_{0,k}/\Sigma_k,\epsilon_k)=0.$ We consider the fibration:
$$
\pi_1\colon \mathcal M_{0,k+1}/\Sigma_k\longrightarrow  \mathcal M_{0,k}/\Sigma_k
$$
Bu the Serre spectral sequence we get the vanishing of \eqref{vc1}.
\par\medskip 
Hence it is enough to prove that $H^{\hdot}(\partial \mathcal M_{0,k+1}/\Sigma_k,\epsilon_k)$ vanishes as well. Consider the Deligne complex which computes $H^{\hdot}(\partial \mathcal M_{0,k+1}/\Sigma_k,\epsilon_k).$ The $p$-cochains of this complex are defined by the following rule:
\begin{equation}\label{g0}
 \bigoplus_{T\in HJ^{p,k+1}_{0}}\left(\bigotimes_{v\in V(T)} C^{\hdot}(\overline{\mathcal M}_{w(v),n_v},\mathbb Q)\otimes\mathrm {det}^h(T)\right)^{\mathrm {Aut}^{h,1}(T)}.
 \end{equation}
Where $T$ is a stable tree with $p$ edges and $k+1$ leaves and $\mathrm {Aut}^{h,1}(T)$ is a group of automorphisms of a tree that fix one marking. This group of automorphisms consists of permutations of hairs of a tree. Each tree $T$ has a vertex $v'$ such that all hairs $m^{-1}(v')$ attached to this vertex are unfrozen i.e. all hairs are allowed to be permuted. Denote by $\mathrm {Aut}(T)^{v'}$ the subgroup of $\mathrm {Aut}^{h,1}(T)$ which permute these hairs. Note that $|m^{-1}(v')|<k$ and hence by the assumption of induction we have: 
$$H^{\hdot}(\overline{\mathcal M}_{0,|m^{-1}(v')|+1}/\Sigma_{|m^{-1}(v')|},\epsilon_{|m^{-1}(v')|})=0.$$
Since the moduli stack of stable rational curves is formal and each element in $\eqref{g0}$ must be in particular skew-invariant under $\mathrm {Aut}^{h,1}(T)$ and we get the vanishing of \eqref{g0}.

\end{proof} 

From the standard Gysin long exact sequence:
\begin{equation}
\dots\rightarrow H^{i-1}_c(D/\Sigma_n,\epsilon_n)\rightarrow H^i_c(\mathcal M_{g,n}/\Sigma_n,\epsilon_n)\rightarrow H^i_c(\mathcal M_{g,n}^{rt}/\Sigma_n,\epsilon_n)\rightarrow\dots
\end{equation}
we obtain the desired quasi-isomorphism.

\end{proof}

\begin{remark}[V. Dotsenko] To prove Lemma \ref{vc0} one can use the following argument. Consider the hypercommutative operad $\mathcal {H}ycom=\{H_{\hdotc}(\overline{\mathcal M}_{0,k+1})\}_{k \geq 2}.$ It is the quotient of the free operad generated by elements $\{m_k\}_{k \geq 2}$ (fundamental classes) modulo the quadratic ideal. A sign representation does not appear in the decomposition of the free operad into irreducible representations and thus operadic ideal is this subrepresentation, hence we do not get any new irreducible representations after passing to the quotient. 

\end{remark}

\begin{Def} We define the \textit{Willwacher differential} $\nabla_1$\footnote{We named this object in honour of Thomas Willwacher who predicted its existence \cite{WillL}.} as a unique zigzag morphism:
$$
\nabla_1\colon C_c^{\hdot}(\mathcal M_{g,n}/ \Sigma_{n},\epsilon_n) \longrightarrow  C_{c}^{\hdot}(\mathcal M_{g,n+1}/ \Sigma_{n+1},\epsilon_{n+1}),
$$
making the following diagram commutes: 
\begin{equation*}
\begin{diagram}[height=3.5em,width=4.2em]
 C_c^{\hdot}(\mathcal M_{\bullet_{g,n}}^{rt},\epsilon_n) & &  \rTo_{}^{\nabla_1^{rt}} &  &  C_{c}^{\hdot}(\mathcal M_{\bullet_{g,n+1}}^{rt},\epsilon_{n+1}) &  \\
\uTo_{\sim}^{j^{rt}_!} & & &  & \uTo_ {\sim}^{j_!^{rt}}  && \\
C_c^{\hdot}(\mathcal M_{g,n}/ \Sigma_{n},\epsilon_n) & &  \rDotsto^{\nabla_1^{rt}} &  &  C_{c}^{\hdot}(\mathcal M_{g,n+1}/ \Sigma_{n+1},\epsilon_{n+1}) & \\
\end{diagram}
\end{equation*}

\end{Def}

Thus one may consider the following cochain complex in the derived category of vector spaces:
\begin{equation}\label{dc}
\begin{CD}
C_c^{\hdot}(\mathcal M_{g,1},\mathbb Q) @>{\nabla_1}>> \dots @>{\nabla_1}>> \left(C_c^{\hdot}(\mathcal M_{g,n},\mathbb Q)\otimes_{\Sigma_n} \mathrm {sgn}_n\right)^{\Sigma_n} @>{\nabla_1}>> \dots
\end{CD}
\end{equation}
Now we can relate the cohomology of the hairy Getzler-Kapranov complex to the cohomology of the total DG-vector space of the complex above:

\begin{Prop}\label{mp} For every $g\geq 1$ the hairy Getzler-Kapranov complex $\textsf H_{\geq 1}\textsf{GK}_g$ is a quasi-isomorphic to the total DG-vector space of \eqref{dc}.
\end{Prop}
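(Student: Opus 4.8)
The plan is to realise $\textsf H_{\geq 1}\textsf{GK}_g^{\hdot}$ as the total complex of a double complex whose columns are indexed by the number of hairs, in the same spirit as the internal-edge filtration used in the proof of Proposition \ref{van}. For each $n\geq 1$ one stratifies the $HJ_g^{\circ}$-diagram $\mathcal H_{\geq 1}\mathcal M_g^{trop}$ by the number of hair coordinates that survive: the locus where at most $n$ hairs are allowed to be nonzero is a \emph{closed} subdiagram (contracting a hair can only decrease this number), so one obtains a decreasing filtration of $\mathcal H_{\geq 1}\mathcal M_g^{trop}$ whose successive open complements $\mathcal F_n$ parametrise configurations with exactly $n$ nonzero hairs. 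Exactly as in Proposition \ref{van} this yields a Postnikov system, hence realises $R\Gamma_c(\mathcal H_{\geq 1}\mathcal M_g^{trop},\EuScript{DM}_{g,\geq 1}^{hair})$ as the convolution of the complex of DG-vector spaces whose $n$-th term is $R\Gamma_c$ of the exactly-$n$-hairs subdiagram (shifted by $n$).

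Second, I would identify the $n$-th term. At a stable graph $\Gamma$ the space $\mathcal F_n$ is a union of copies of $(\mathbb R_{>0})^{n}\times\sigma_{\Gamma}$ indexed by the admissible choices of the surviving hairs, so $R\Gamma_c$ contributes a shift by $n$ and the sign local system $\det$ of the set of hairs. Running the argument of Proposition \ref{KD} for the diagram $HJ_g^{\circ}$ restricted to graphs with exactly $n$ hairs — decompose the structure functor through the poset of internal-edge numbers, use exactness of the relevant left Kan extension and the Mittag--Leffler condition, then apply Künneth, the normalisation \eqref{norm} and Proposition \ref{D1} — shows that the edge-direction complex of decorated graphs with exactly $n$ hairs is the Deligne complex of $\overline{\mathcal M}_{g,n}$ with its boundary, now with the $n$ markings unlabelled and twisted by the sign representation. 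Thus the $n$-th term is quasi-isomorphic to $C_c^{\hdot}(\mathcal M_{g,n}/\Sigma_n,\epsilon_n)\cong (C_c^{\hdot}(\mathcal M_{g,n},\mathbb Q)\otimes_{\Sigma_n}\mathrm{sgn}_n)^{\Sigma_n}$, the shift by $n$ matching the placement of the $n$-th column in the total complex of \eqref{dc}.

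Third, and this is the step I expect to be the main obstacle, I would identify the connecting differential of the Postnikov system with the Willwacher differential $\nabla_1$. Combinatorially this connecting map is the hair-insertion operator $\widetilde\nabla_1$ of \eqref{Bdiff2}, which on the decoration at a vertex is the normalised sum $\omega\mapsto\sum_h(-1)^{h-1}\pi_h^{*}\omega$ of pullbacks along the forgetful morphisms \eqref{forg}, together with the combinatorial pull-push \eqref{norm2} recording into which boundary stratum the new hair places the curve. On the other hand $\nabla_1$ was defined, via Proposition \ref{quasirt}, as the zigzag $(j_!^{rt})^{-1}\circ\nabla_1^{rt}\circ j_!^{rt}$ with $\nabla_1^{rt}=\pi_{\bullet_{g,n},\bullet_{g,n+1}!}\circ\mu^{*}_{\bullet_{g,n},\bullet_{g,n+1}}$ a pull-push along the point-forgetting morphisms on the rational-tails moduli. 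To match them I would compare the two Deligne-type resolutions — that of $\overline{\mathcal M}_{g,n}$ appearing in the GK complex and that of $\mathcal M_{\bullet_{g,n}}^{rt}$ appearing in the definition of $\nabla_1$ — and check that $\widetilde\nabla_1$ corresponds, under $j_!^{rt}$, precisely to $\nabla_1^{rt}$: the pullbacks $\pi_h^{*}$ on the top stratum reproduce $\mu^{*}$, while the combinatorial pull-push absorbs exactly the extra boundary divisors of $\overline{\mathcal M}_{g,n+1}$ created when the new marked point collides with an old node or an old marked point. This is a diagram chase, but the bookkeeping of signs, orientations and the $\Sigma_{n+1}$-symmetrisation is delicate, and it is essentially the only place where the precise normalisation of $\nabla_1$ enters; morally it is the reason $\nabla_1$ had to be introduced through the rational-tails detour rather than as a naïve forgetful pullback, which is not defined on compactly supported cohomology.

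Finally, combining the three steps, the Postnikov system realises $\textsf H_{\geq 1}\textsf{GK}_g^{\hdot}$ as the total complex of a complex of complexes whose $n$-th column is $C_c^{\hdot}(\mathcal M_{g,n}/\Sigma_n,\epsilon_n)$ and whose horizontal differential is $\nabla_1$ — that is, the total DG-vector space of \eqref{dc}. As all the identifications above are through quasi-isomorphisms, this gives the claimed quasi-isomorphism, uniformly in $g$; for $g=1$ one runs the same argument with the category $H_{\geq 1}J_1$ and the diagram $\mathcal H_{\geq 1}\mathcal M_1^{trop}$ in place of $HJ_g$.
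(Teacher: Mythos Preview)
Your proposal is correct and follows essentially the same route as the paper: filter $\mathcal H_{\geq 1}\mathcal M_g^{trop}$ by the number of nonzero hair coordinates to obtain a Postnikov system, identify the $n$-th graded piece with $C_c^{\hdot}(\mathcal M_{g,n}/\Sigma_n,\epsilon_n)$ via the argument of Proposition~\ref{KD}, and then match the connecting map $\widetilde\nabla_1$ with $\nabla_1$ through the rational-tails zigzag. The paper's treatment of your ``main obstacle'' is in fact quite short: it observes that, in terms of Deligne resolutions, the morphism $j^{rt}_!$ is given by pullback along the inclusions of strata (a base-change statement), from which the commutativity of the square relating $\widetilde\nabla_1$ and $\nabla_1^{rt}$ is immediate---so the bookkeeping you anticipate is largely absorbed into the definition of $\nabla_1$ itself.
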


\begin{proof} We are going to use the Cousin resolution for the hairy Deligne-Mumford DG-sheaf $\EuScript {DM}_{g,\geq 1}^{hair}$ (a filtration is defined by a number of non-internal edges (hairs) in the stable graph):
\par\medskip
For every $g\geq 1$ consider the following sequence of $H_{\geq 1}J_g^{\circ}$-diagrams:
$$
\mathcal H_{\geq 1} \mathcal M_{g}^{trop}\hookleftarrow\dots\hookleftarrow\mathcal H_{\geq k} \mathcal M_{g}^{trop}\hookleftarrow\dots
$$
Here $\mathcal H_{\geq k} \mathcal M_{g}^{trop}$ is an $H_{\geq 1}J_g^{\circ}$-diagram defined by the following rule. A stratified topological space $h_{\geq 1}\sigma_{\Gamma}^{hair}$ carries the following filtration:
$$
h_{\geq 1}\sigma_{\Gamma}^{hair}\supset \dots \supset h_{\geq |m^{-1}(V(\Gamma))|}\sigma_{\Gamma}^{hair},
$$
where $h_{\geq k}\sigma_{\Gamma}^{hair}$ is a subspace where at least $k$ "hair coordinates" are non-zero. Hence $\mathcal H_{\geq k} \mathcal M_{g}^{trop}$ sends a stable graph $\Gamma$ to 
$h_{\geq k}\sigma_{\Gamma}^{hair}$ if $k\leq |m^{-1}(V(\Gamma))|$ and to the empty set otherwise. We have the associated sequence of morphisms of constructible DG-sheaves:
\begin{equation}
\begin{CD}
\EuScript {DM}_{g,\geq 1}^{hair} @>>>\dots @>>> i_{\geq k !}i^{*}_{\geq k}\EuScript {DM}_{g,\geq 1}^{hair} @>>>\dots
\end{CD}
\end{equation}
Where $i_{\geq k}\colon \mathcal H_{\geq k} \mathcal M_{g}^{trop}\hookrightarrow \mathcal H_{\geq 1} \mathcal M_{g}^{trop}$ is a closed inclusion. We have the standard Postnikov system associated with the sequence above. Hence we can realise the combinatorial DG-sheaf $\EuScript {DM}_{g,\geq 1}^{hair}$ as the total object of the following complex of constructible DG-sheaves:
\begin{equation}
\begin{CD}
i_{1 !}i_{1}^*\EuScript {DM}_{g,\geq 1}^{hair} @>>>\dots @>>> i_{k!}i_{k}^*\EuScript {DM}_{g,\geq 1}^{hair}[k-1] @>>> \dots
\end{CD}
\end{equation}
Where $i_k\colon \mathcal H_{k} \mathcal M_{g}^{trop}\hookrightarrow \mathcal H_{\geq 1} \mathcal M_{g}^{trop}$ is the open inclusion and $\mathcal H_{k} \mathcal M_{g}^{trop}$ is the complement to $\mathcal H_{\geq k+1} \mathcal M_{g}^{trop}.$ Passing to the compactly supported cochains we get the following DG-vector space:
\begin{equation}\label{h1}
\begin{CD}
\textsf {H}_{1}\textsf {GK}_g^{\hdot} @>{\widetilde{\nabla}_1}>> \dots @>{\widetilde{\nabla}_1}>> \textsf {H}_{k}\textsf {GK}_g^{\hdot} @>{\widetilde{\nabla}_1}>> \dots
\end{CD}
\end{equation}
Where we have used the following notation:
\begin{equation}\label{hgke}
\textsf {H}_{k}\textsf {GK}_g^{\hdot}:=R\Gamma_c(\mathcal H_{\geq 1}\mathcal M^{trop}_g,i_{k !}i_{k}^*\EuScript {DM}_{g,\geq 1}^{hair}[k-1])
\end{equation}
Note that \eqref{hgke} is a decorated graph complex that consists of genus $g$ decorated stable graphs with exactly $k$ hairs and $\widetilde{\nabla}_1$ is a differential defined by \eqref{Bdiff2}.

Acting like in the proof of Proposition \ref{KD} for every $n$ we can explicitly realise the complex $\textsf {H}_{n}\textsf {GK}_g^{\hdot}$ as a total DG-vector space associated with double complex:
\begin{align}
C^{\hdot}(\overline{\mathcal M}_{g,n},\mathbb Q)&\otimes {\det}^h(\bullet_{g,n})\rightarrow \dots \\
\dots &\rightarrow  \bigoplus_{\Gamma\in HJ^{k,n}_{g}}\left(\bigotimes_{v\in V(\Gamma)} C^{\hdot}(\overline{\mathcal M}_{w(v),n_v},\mathbb Q)\otimes\mathrm {det}^h(\Gamma)\right)^{\mathrm {Aut}^h(\Gamma)}\rightarrow\dots,
\end{align}
The differential $\widetilde{D}$ in the complex above is defined by \eqref{diffe}. By Proposition \ref{D1} we obtain that the complex $\textsf{H}_n\textsf{GK}_g^{\hdot}$ is quasi-isomorphic to $C_c^{\hdot}(\mathcal M_{g,n}/\Sigma_n,\epsilon_n).$ 
In terms of P. Deligne's complexes morphism \eqref{rt} corresponds to pullbacks along natural inclusions of the corresponding strata.\footnote{The compactly supported cochains of the moduli stack $\mathcal M_{g,n}^{rt}/\Sigma_n$ can be realised by P. Deligne's complex (we assume that this moduli stack is compactified by the stack $\overline{\mathcal M}_{g,n}/ \Sigma_n,$ with the corresponding complement $\mathcal D^{rt}$).} This follows from the base change for the compactly supported cochains. Hence the following diagram commutes:
\begin{equation*}
\begin{diagram}[height=3.1em,width=4.4em]
C_c^{\hdot}(\mathcal M_{g,n}/\Sigma_{n},\epsilon_n) & \rTo^{j^{rt}_!}_{\sim}   &  C_c^{\hdot}(\mathcal M_{g,n}^{rt}/\Sigma_{n},\epsilon_n)  \\
\dTo^{\widetilde{\nabla}_1} & & \dTo^{{\nabla}_1^{rt}}   \\
C_c^{\hdot}(\mathcal M_{g,n+1}/\Sigma_{n+1},\epsilon_{n+1}) & \rTo^{j_!^{rt}}_{\sim}   &  C_c^{\hdot}(\mathcal M_{g,n+1}^{rt}/\Sigma_{n+1},\epsilon_{n+1}) \\
\end{diagram}
\end{equation*}
By the definition of the Willwacher differential, the morphism $\widetilde{\nabla}_1$ coincides with $\nabla_1$ and the desired result follows.

\end{proof}

From Proposition \ref{mp} and Theorem \ref{wgk} we immediately get:
\begin{Cor} For any $g,n$ such that $2g+n-2>0$ the induced Willwacher differential:
$$
\nabla_1\colon \left(H^{\hdot}_c(\mathcal M_{g,n},\mathbb Q)\otimes_{\Sigma_n} \mathrm{sgn}_n\right )^{\Sigma_n}\longrightarrow\left(H^{\hdot}_c(\mathcal M_{g,n+1},\mathbb Q)\otimes_{\Sigma_{n+1}} \mathrm{sgn}_{n+1}\right )^{\Sigma_{n+1}}
$$
preserves the canonical weight quotients. 
\end{Cor}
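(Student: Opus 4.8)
The plan is to read the statement off the description of the hairy Getzler--Kapranov complex obtained in Proposition~\ref{mp}, using that the Willwacher differential, although defined as a zigzag, is realised at the level of decorated graph complexes by the honest --- and, as we shall see, filtered --- morphism $\widetilde{\nabla}_1$. Recall from the proof of Proposition~\ref{mp} that $\textsf H_{\geq 1}\textsf{GK}_g$ is the total DG-vector space of the Cousin complex \eqref{h1}, whose $n$-th term $\textsf H_n\textsf{GK}_g^{\hdot}$ is quasi-isomorphic to $C_c^{\hdot}(\mathcal M_{g,n}/\Sigma_n,\epsilon_n)$ via the decorated-graph (equivalently Deligne, cf.\ Proposition~\ref{D1}) complex, and whose differential $\widetilde{\nabla}_1$ is identified with $\nabla_1$.

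First I would equip the hairy Deligne--Mumford sheaf $\EuScript{DM}_{g,\geq 1}^{hair}$ with the weight filtration $\EuScript W_{\bullet}$, defined exactly as for $\EuScript{DM}_{g,n}$ by canonical truncation of the vertex decorations $\bigotimes_v C^{\hdot}(\overline{\mathcal M}_{w(v),n_v},\mathbb Q)$, and check that the whole Cousin complex \eqref{h1} is filtered by $\EuScript W_{\bullet}$. The only point to verify is that the connecting maps $\widetilde{\nabla}_1$ are $\EuScript W$-filtered; but by \eqref{Bdiff2} and \eqref{expc} they are assembled from pullbacks $\pi_h^{*}$ along the forgetful morphisms \eqref{forg} and from pushforwards $\pi_{\Gamma',\Gamma!}$ along the finite projections $(\overline{\mathcal M}_\Gamma\times K)/\mathrm{Aut}^h(\Gamma)\to\overline{\mathcal M}_{\Gamma'}/\mathrm{Aut}^h(\Gamma')$, each of which is a degree-preserving morphism of complexes and hence automatically compatible with canonical truncation; passing to $\mathrm{Aut}^h$-invariants and twisting by the orientation/sign systems is exact over $\mathbb Q$ and preserves filtrations. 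In particular $\nabla_1$ is then a filtered morphism of filtered complexes on cochains.

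Next I would apply $\mathrm{Gr}_k^{\EuScript W}$ to the entire Cousin complex \eqref{h1}. Reading Theorem~\ref{wgk} --- that is, its proof, which identifies $R\Gamma_c$ of $\mathrm{Gr}_k^{\EuScript W}$ of a Deligne--Mumford sheaf with the Deligne complex $C_k^{\hdot}$ of Proposition~\ref{D2} --- column by column, the cohomology of the $n$-th column of $\mathrm{Gr}_k^{\EuScript W}$ of \eqref{h1} recovers $\mathrm{Gr}_k^W H^{\hdot}_c(\mathcal M_{g,n}/\Sigma_n,\epsilon_n)$, which under the sign-local-system dictionary is $\big(\mathrm{Gr}_k^W H^{\hdot}_c(\mathcal M_{g,n},\mathbb Q)\otimes_{\Sigma_n}\mathrm{sgn}_n\big)^{\Sigma_n}$. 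Since $\mathrm{Gr}_k^{\EuScript W}$ is functorial, the differential of this complex is precisely the map induced by $\widetilde{\nabla}_1=\nabla_1$ on the $k$-th weight quotients, and its mere existence is the assertion of the corollary. One can also argue directly: $j_!^{rt}$, $\mu^{*}$ and $\pi_!$ are morphisms of mixed Hodge structures on compactly supported cohomology, hence so is the composite $\nabla_1$, and morphisms of mixed Hodge structures are strictly compatible with $W$.

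The one step that deserves care is the input shared with all of Section~3: that the canonical-truncation filtration $\EuScript W_{\bullet}$ really induces the Hodge-theoretic weight filtration on $H^{\hdot}_c$, in the equivariant/local-system form $H^{\hdot}_c(\mathcal M_{g,n}/\Sigma_n,\epsilon_n)$ as well. This is Proposition~\ref{D2} after pulling the situation back to $\mathcal M_{g,n}$, running the argument $\Sigma_n$-equivariantly, and using exactness of $\Sigma_n$-invariants over $\mathbb Q$. Granting that, the corollary is formal; I expect no serious obstacle, since the content lies entirely in recognising that $\widetilde{\nabla}_1$ is built from pullbacks and finite pushforwards, which is exactly why the deduction from Proposition~\ref{mp} and Theorem~\ref{wgk} is immediate.
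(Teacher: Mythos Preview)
Your proposal is correct and is precisely the intended unpacking of the paper's one-line deduction ``From Proposition~\ref{mp} and Theorem~\ref{wgk} we immediately get''. The paper leaves the verification that $\widetilde{\nabla}_1$ respects the canonical-truncation filtration (and hence, via Theorem~\ref{wgk}, the weight quotients) implicit; you have spelled this out, and your alternative argument via morphisms of mixed Hodge structures is also valid.
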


Our main result in this section is the following theorem which was originally conjectured in \cite{AWZ} (Conjecture $31$):
\begin{Th}\label{th} Consider the following complex:
\begin{equation}\label{brc1}
\begin{CD}
C_c^{\hdot}(\mathcal M_{g,1},\mathbb Q) @>{\nabla_1}>> \dots @>{\nabla_1}>> \left(C_c^{\hdot}(\mathcal M_{g,n},\mathbb Q)\otimes_{\Sigma_n} \mathrm {sgn}_n\right)^{\Sigma_n} @>{\nabla_1}>> \dots
\end{CD}
\end{equation}
\begin{enumerate}[(i)]
\item For $g\geq 2$ \eqref{brc1} is quasi-isomorphic to $C_c^{\hdot-1}(\mathcal M_{g},\mathbb Q).$ 
\par\medskip 
\item For $g=1$ the cohomology of \eqref{brc1} is given by:
$$
\prod_{n=3}^{\infty}(S_{n+1}\oplus \overline{S}_{n+1}\oplus Eis_{n+1})[2n]\oplus \mathbb Q[3]
$$\par\medskip 
\item For $g=0$ \eqref{brc1} is acyclic. 
\end{enumerate} 
\end{Th}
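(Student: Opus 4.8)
My plan is to dispatch the three parts separately; (i) and (iii) are essentially corollaries of what has already been proved, while (ii) carries the real content.

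For (i), I would simply combine Proposition~\ref{mp} with Corollary~\ref{cor}: the former identifies the total DG-vector space of \eqref{brc1} with the hairy Getzler--Kapranov complex $\textsf H_{\geq 1}\textsf{GK}_g^{\hdot}$ for every $g\geq 1$, and the latter identifies $\textsf H_{\geq 1}\textsf{GK}_g^{\hdot}$ with $C_c^{\hdot}(\mathcal M_g,\mathbb Q)[-1]=C_c^{\hdot-1}(\mathcal M_g,\mathbb Q)$ once $g>1$; nothing more is needed. For (iii), note that $\mathcal M_{0,n}$ is empty unless $n\geq 3$, so the first two terms of \eqref{brc1} vanish, and for $n\geq 3$, since we work over $\mathbb Q$, the cohomology of $\bigl(C_c^{\hdot}(\mathcal M_{0,n},\mathbb Q)\otimes_{\Sigma_n}\mathrm{sgn}_n\bigr)^{\Sigma_n}$ is the sign-isotypic component of $H_c^{\hdot}(\mathcal M_{0,n},\mathbb Q)$, which vanishes by Vassiliev's theorem \cite{VAS1} (the same input used in the proof of Lemma~\ref{vc0}). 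Hence \eqref{brc1} is term-wise acyclic for $g=0$, so acyclic, independently of the precise model chosen for $\nabla_1$.

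For (ii), I would first extend Proposition~\ref{mp} to $g=1$ exactly as indicated in the Remark following Corollary~\ref{cor}: working with the category $H_{\geq 1}J_1$ of stable genus-$1$ graphs carrying at least one marking and the $H_{\geq 1}J_1^{\circ}$-diagram $\mathcal H_{\geq 1}\mathcal M_1^{trop}$, the same Cousin--Postnikov argument identifies the total DG-vector space of \eqref{brc1} with $\textsf H_{\geq 1}\textsf{GK}_1^{\hdot}$. Corollary~\ref{cor} is unavailable here---there is no stack $\mathcal M_1$, and indeed no stable genus-$1$ graph without markings---so one must compute $\textsf H_{\geq 1}\textsf{GK}_1^{\hdot}$ directly. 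I would do this via the spectral sequence of the double complex \eqref{brc1}, whose first page is
$$
E_1^{n,\hdot}=\bigl(H_c^{\hdot}(\mathcal M_{1,n},\mathbb Q)\otimes_{\Sigma_n}\mathrm{sgn}_n\bigr)^{\Sigma_n},\qquad d_1=\nabla_1,
$$
feeding into it the explicit $\Sigma_n$-equivariant mixed Hodge structures on $H_c^{\hdot}(\mathcal M_{1,n},\mathbb Q)$ from \cite{CF} and \cite{Pet2}. By Eichler--Shimura the sign-alternating part of $H_c^{\hdot}(\mathcal M_{1,n},\mathbb Q)$ attached to the local system $\mathbb V_{n-1}$ (the $(n-1)$st symmetric power of the standard weight-one local system on $\mathcal M_{1,1}$) is precisely $S_{n+1}\oplus\overline S_{n+1}\oplus Eis_{n+1}$, and since $\mathbb V_{n-1}$ already sits at the edge of the range of local systems occurring in the fibres of $\mathcal M_{1,n}\to\mathcal M_{1,1}$, this contribution lands in a cohomological degree that, for dimension reasons, is not occupied by the $\mathbb V_{n-1}$-part of $H_c^{\hdot}(\mathcal M_{1,n\pm1},\mathbb Q)$. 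As $\nabla_1$ preserves cohomological degree and the weight filtration (the Corollary stated just before this theorem), it must act by zero on these classes, they cannot be boundaries, and no higher differential can interfere. Hence $E_2=E_\infty$ is the product over $n\geq 3$ of these pieces, placed in the degrees forced by the totalisation, together with $H_c^2(\mathcal M_{1,1},\mathbb Q)\cong\mathbb Q$ coming from the first column (which lies in $\ker\nabla_1$ and, being in the first column, is not a boundary); this yields $\prod_{n\geq 3}(S_{n+1}\oplus\overline S_{n+1}\oplus Eis_{n+1})[2n]\oplus\mathbb Q[3]$.

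The main obstacle is the final step of (ii): one has to extract from \cite{CF} and \cite{Pet2} the exact cohomological degree, weight and $\Sigma_n$-isotypic position of every class contributing to the sign-twisted invariants, and then verify---using the description of $\nabla_1$ as a pull-push along the forgetful morphisms $\mu_x$ together with the rational-tails model of Proposition~\ref{quasirt}---that $\nabla_1$ genuinely annihilates the cuspidal and Eisenstein summands and that the spectral sequence of \eqref{brc1} degenerates at $E_2$. Parts (i) and (iii), by contrast, are immediate.
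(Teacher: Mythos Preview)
Your proposal is correct and follows essentially the same route as the paper. Parts (i) and (iii) are handled identically (Proposition~\ref{mp} plus Corollary~\ref{cor}, and Vassiliev's vanishing~\cite{VAS1}, respectively). For part (ii) the paper works directly with the $E_1$-page of \eqref{brc1} rather than passing through $\textsf H_{\geq 1}\textsf{GK}_1^{\hdot}$, but uses the same inputs you list: the formula from \cite{CF}, \cite{Pet2} giving $(H_c^{\hdot}(\mathcal M_{1,n},\mathbb Q)\otimes_{\Sigma_n}\mathrm{sgn}_n)^{\Sigma_n}\cong H_c^1(\mathcal M_{1,1},\mathbb V_{n-1})[n]$ for $n>1$, Eichler--Shimura for the identification with modular forms, and a degree argument for the vanishing of $\nabla_1$. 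The ``main obstacle'' you flag is in fact lighter than you suggest: the cited formula already concentrates the sign-isotypic cohomology of each column in a single degree that shifts by one as $n$ increases, so $\nabla_1$ (being of degree zero) vanishes automatically for $n\geq 2$; only the first column needs a separate check, and there the target $(H_c^{\hdot}(\mathcal M_{1,2})\otimes\mathrm{sgn})^{\Sigma_2}$ is zero because there are no modular forms of odd weight. No deeper verification with the rational-tails model is required.
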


\begin{proof}

\begin{enumerate}[(i)]
\item For $g\geq 2$ the proof immediately follows from Proposition \ref{mp} and Corollary \ref{cor}.
\item Consider a local system $\mathbb V_1$ on $\mathcal M_{1,1}$ which is defined by the rule $\mathbb V_1:=R^1\pi_{1\,*}\mathbb Q.$ By $\mathbb V_{k}$ we denote the $k$-symmetric power of this local system. Note that for every $k$ the local system $\mathbb V_k$ underlies the certain VPHS of the weight $k,$ which will be denoted by the same symbol. Due to the Eichler-Shimura theory, the weight filtration $W_{\hdotc}$ on the cohomology of the VPHS has the following description:
$$
W_0H_c^1(\mathcal M_{1,1},\mathbb V_k) \subset W_{k+1}H_c^1(\mathcal M_{1,1},\mathbb V_k):=H_c^1(\mathcal M_{1,1},\mathbb V_k),
$$
with the following graded quotients: 
\begin{align*}
W_0H_c^1(\mathcal M_{1,1},\mathbb V_k)&\cong Eis_{k+2},\\
\mathrm {Gr}_W^{k+1}H_c^1(\mathcal M_{1,1},\mathbb V_k)&\cong S_{k+2}\oplus \overline{S}_{k+2}.
\end{align*}
Here $S_{k+2}$ is the vector space of cusp forms of the weight $k+2$ and $\overline {S}_{k+2}$ is the vector space of the antiholomorphic cusp forms of weight $k+2$ and $Eis_{k+2}$ is the vector space of the Eisenstein series of weight $k+2.$ From Proposition $1$ from \cite{CF} (see also \cite{Gor} and \cite{Pet2}) one can show that for any $n>1$ there is an isomorphism of graded vector spaces:
$$(H^{\hdot}_c(\mathcal M_{1,n},\mathbb Q)\otimes_{\Sigma_n} \mathrm {sgn}_n)^{\Sigma_n}=H^{1}_c(\mathcal M_{1,1},\mathbb V_{n-1} )[n]$$
Further notice that the "fundamental cohomology class" $\omega\in H^2_c(\mathcal M_{1,1},\mathbb Q)$ goes to zero under the differential $\nabla_1$ (due to the lack of non-zero modular forms for odd weight). Hence the cohomology of \eqref{brc1} is isomorphic to (cf. \cite{Tael} \cite{Beh}):
$$
\prod_{n=3}^{\infty}(S_{n+1}\oplus \overline{S}_{n+1}\oplus Eis_{n+1})[2n]\oplus \mathbb Q[3]
$$
\item Following \cite{VAS1} we have $(H_c^{\hdot}(\mathcal M_{0,n},\mathbb Q)\otimes \mathrm {sgn}_n)^{\Sigma_n}=0,$ hence the result follows. 

\end{enumerate}
\end{proof}

\bibliographystyle{amsalpha}
\bibliography{tt}

\providecommand{\bysame}{\leavevmode\hbox to3em{\hrulefill}\thinspace}
\providecommand{\MR}{\relax\ifhmode\unskip\space\fi MR }
\providecommand{\MRhref}[2]{%
  \href{http://www.ams.org/mathscinet-getitem?mr=#1}{#2}
}
\providecommand{\href}[2]{#2}
\begin{thebibliography}{GSNPR05}

\bibitem[AC98]{AC}
Enrico Arbarello and Maurizio Cornalba, \emph{Calculating cohomology groups of
  moduli spaces of curves via algebraic geometry}, Publications
  Math{\'e}matiques de l'Institut des Hautes {\'E}tudes Scientifiques
  \textbf{88} (1998), 97--127 (english).

\bibitem[AW{\v{Z}}20]{AWZ}
Assar Andersson, Thomas Willwacher, and Marko {\v{Z}}ivkovi{\'c},
  \emph{Oriented hairy graphs and moduli spaces of curves},
  \url{https://arxiv.org/pdf/2005.00439.pdf}, 2020.

\bibitem[A{\v{Z}}20]{AZ}
Assar Andersson and Marko {\v{Z}}ivkovi{\'c}, \emph{Hairy graphs to ribbon
  graphs via a fixed source graph complex},
  \url{https://arxiv.org/pdf/1912.09438.pdf}, 2020.

\bibitem[Beh03]{Beh}
Kai Behrend, \emph{Derived $\ell$-adic categories of algebraic stacks}, Memoirs
  of the American Mathematical Society \textbf{163} (2003) (english).

\bibitem[Bro12]{Brown}
Francis Brown, \emph{Mixed {T}ate motives over {$\mathrm {Spec}(\mathbb Z)$}},
  Annals of Mathematics \textbf{175} (2012), 949--976 (english).

\bibitem[CF06]{CF}
Caterina Consani and Carel Faber, \emph{On the cusp form motives in genus {$1$}
  and level {$1$}}, Advanced {S}tudies in {P}ure {M}athematics, vol.~45, 2006,
  pp.~297--314.

\bibitem[CGP21a]{CGP2}
Melody Chan, S{\o}ren Galatius, and Sam Payne, \emph{Topology of moduli spaces
  of tropical curves with marked points}, Facets of {A}lgebraic {G}eometry
  volume in honor of {W}illiam {F}ulton, 2021.

\bibitem[CGP21b]{CGP1}
\bysame, \emph{Tropical curves, graph complexes, and top weight cohomology of
  {$\mathcal M_g$}}, Journal of the {A}merican {M}athematical {S}ociety
  \textbf{34} (2021), 565--594.

\bibitem[CH20]{CH}
Joana Cirici and Geoffroy Horel, \emph{Mixed {H}odge structures and formality
  of symmetric monoidal functors}, Annales Scientifiques de l'{\'E}cole Normale
  Sup{\'e}rieure \textbf{53} (2020), 1071--1104 (english).

\bibitem[Del71]{DelH}
Pierre Deligne, \emph{Th{\'e}orie de {H}odge: {$\mathrm {II}$}}, Publications
  {M}ath{\'e}matiques de l'{I}nstitut des {H}autes {{\'E}}tudes {S}cientifiques
  \textbf{40} (1971), 5--58.

\bibitem[Del74]{DelH2}
\bysame, \emph{Th{\'e}orie de {H}odge: {$\mathrm {III}$}}, Publications
  {M}ath{\'e}matiques de l'{I}nstitut des {H}autes {{\'E}}tudes {S}cientifiques
  \textbf{44} (1974), 5--77.

\bibitem[DM69]{DM}
Pierre Deligne and David Mumford, \emph{The irreducibility of the space of
  curves of given genus}, Publications {M}ath{\'e}matiques de l'{I}nstitut des
  {H}autes {{\'E}}tudes {S}cientifiques \textbf{36} (1969), 75--109 (english).

\bibitem[GL14]{LG}
Dennis Gaitsgory and Jacob Lurie, \emph{Weil's conjecture for function fields},
  \url{http://www.math.harvard.edu/~lurie/papers/tamagawa.pdf}, 2014.

\bibitem[Gor13]{Gor}
Alexey Gorinov, \emph{Rational cohomology of the moduli spaces of pointed genus
  {$1$} curves}, \url{https://arxiv.org/pdf/1303.5693.pdf}, 2013.

\bibitem[GR71]{Gro}
Alexander Grothendieck and Mich{\`{e}}le Raynaud, \emph{Rev{\^{e}}tements
  etales et groupe fondamental}, Springer Berlin Heidelberg, 1971.

\bibitem[GSNPR05]{form}
F.~Guillen~Santos, V.~Navarro, P.~Pascual, and A.~Roig, \emph{Moduli spaces and
  formal operads}, Duke {M}athematical {J}ournal \textbf{129} (2005), 291--335.

\bibitem[KG94]{GK}
Mikhail Kapranov and Victor Ginzburg, \emph{Koszul duality for operads}, Duke
  {M}athematical {J}ournal \textbf{76} (1994), 203--272 (english).

\bibitem[KG98]{KG}
Mikhail Kapranov and Ezra Getzler, \emph{Modular operads}, Compositio
  {M}athematica \textbf{110} (1998), 65--125.

\bibitem[Knu83]{KN}
F~Knudsen, Finn, \emph{The projectivity of the moduli space of stable curves,
  {$\mathrm {II}:$} {T}he stack {$M_{g,n}$}}, Mathematica Scandinavica
  \textbf{52} (1983), 161--199.

\bibitem[KS16]{KaSh}
Mikhail Kapranov and Vadim Schechtman, \emph{Perverse sheaves over real
  hyperplane arrangements}, Ann. Math \textbf{183} (2016), 619--679 (english).

\bibitem[KW{\v{Z}}17]{KWZ}
Anton Khoroshkin, Thomas Willwacher, and Marko {\v{Z}}ivkovi{\'c},
  \emph{Differentials on graph complexes {II}: hairy graphs}, Letters in
  {M}athematical {P}hysics \textbf{107} (2017), 1781--1797 (english).

\bibitem[MW15]{MW}
Sergei Merkulov and Thomas Willwacher, \emph{Props of ribbon graphs, involutive
  {L}ie bialgebras and moduli spaces of curves},
  \url{https://arxiv.org/pdf/1511.07808.pdf}, 2015.

\bibitem[Pet12]{Pet2}
Dan Petersen, \emph{Cusp form motives and admissible {$G$}-covers}, Algebra
  {N}umber {T}heory \textbf{6} (2012), 1199--1221 (english).

\bibitem[PW21]{PW1}
Sam Payne and Thomas Willwacher, \emph{Weight two compactly supported
  cohomology of moduli spaces of curves},
  \url{https://arxiv.org/abs/2110.05711}, 2021.

\bibitem[SD72]{SD}
B.~Saint-Donat, \emph{Techniques de descente cohomologique}, Th{\'e}orie des
  Topos et Cohomologie Etale des Sch{\'e}mas (Berlin, Heidelberg), Springer
  Berlin Heidelberg, 1972, pp.~83--162.

\bibitem[Tae15]{Tael}
Lenny Taelman, \emph{Characteristic classes for curves of genus one}, Michigan
  Mathematical Journal \textbf{64} (2015), no.~3.

\bibitem[TW17]{TW}
Victor Turchin and Thomas Willwacher, \emph{Commutative hairy graphs and
  representations of {{\(\mathrm{Out}(F_r)\)}}}, J. Topol. \textbf{10} (2017),
  no.~2, 386--411 (English).

\bibitem[Vas88]{VAS1}
V.~A. Vasil'ev, \emph{Braid group cohomologies and algorithm complexity},
  Funct. Anal. Appl. \textbf{22} (1988), no.~3, 182--190 (English).

\bibitem[Wil15]{Will}
Thomas Willwacher, \emph{M. {K}ontsevich's graph complex and the
  {G}rothendieck--{T}eichm{\"u}ller {L}ie algebra}, Inventiones mathematicae
  \textbf{200} (2015), 671--760 (english).

\bibitem[Wil19]{WillL}
\bysame, \emph{A letter to {S}ergei {M}erkulov}, 2019.

\end{thebibliography}
\end{document}